        \def\Q{{\mathcal Q}}
\def\OO{{\mathcal O}}
\def\F{\mathcal{F}}
\def\G{\mathcal{G}}
\def\I{\mathcal{I}}
\def\Pic0{{\rm Pic}^0}
\def\l{{\underline l}}
\def\n{{\underline n}}
\def\cR{{\mathcal R}}
\theoremstyle{plain}
\newtheorem{theorem}{Theorem}[section]
\newtheorem{theoremalpha}{Theorem}
\newtheorem{corollaryalpha}[theoremalpha]{Corollary}
\newtheorem{propositionalpha}[theoremalpha]{Proposition}
\newtheorem{proposition/example}[theorem]{Proposition/Example}
\newtheorem{proposition}[theorem]{Proposition}
\newtheorem{corollary}[theorem]{Corollary}
\newtheorem{lemma}[theorem]{Lemma}
\theoremstyle{definition}
\newtheorem{definition}[theorem]{Definition}
\newtheorem{remark}[theorem]{Remark}
\newtheorem{example}[theorem]{Example}
\newtheorem{conjecture/question}[theorem]{Conjecture/Question}
\newtheorem{remark/definition}[theorem]{Remark/Definition}
\newtheorem{notation/assumptions}[theorem]{Assumptions/Notation}
\numberwithin{equation}{section}
\theoremstyle{remark}
\keywords{Abelian varieties, Fourier-Mukai transform}
\subjclass[2010]{14K05, 14K02, 14K12}
\begin{document}

\title{Cohomological rank  functions on abelian varieties}
\author{Zhi Jiang}
  \address{Shanghai Center for Mathematical Sciences, 22F East Guanghua Tower, Fudan University, No. 220 Handan Road, Shanghai, China  }
 \email{{\tt zhijiang@fudan.edu.cn}}
 \thanks{}

\author{Giuseppe Pareschi}
  \address{Dipartimento di Matematica, Universit\`a di Roma, Tor Vergata, V.le della Ricerca Scientifica, I-00133 Roma, Italy}
 \email{{\tt pareschi@mat.uniroma2.it}}
 \thanks{The first author was partially supported by the grant ``Recruitment Program of Global Experts". The second author was partially supported by Tor Vergata funds E82F16000470005 and Italian MIUR-PRIN funds ``Geometry of Algebraic Varieties".
 He also thanks the Shanghai Center for Mathematical Sciences and Fudan University for support and excellent hospitality }

\maketitle

\setlength{\parskip}{.1 in}

\begin{abstract}Generalizing the \emph{continuous rank function} of Barja-Pardini-Stoppino,  in this paper we consider \emph{cohomological rank functions} of $\mathbb Q$-twisted (complexes of) coherent sheaves on abelian varieties. They satisfy a  natural transformation formula with respect to the Fourier-Mukai-Poincar\'e transform,
which has several consequences. In many concrete geometric contexts these functions provide useful invariants. We illustrate this with two different applications, the first one to GV-subschemes and the second one to multiplication maps of global sections of ample line bundles on abelian varieties.
\end{abstract}

\section*{Introduction}

In their paper \cite{bps} M.A. Barja, R. Pardini and L. Stoppino introduce and study the \emph{continuous rank function} associated to a line bundle $M$ on a variety $X$ equipped with a morphism  $X\buildrel f\over\rightarrow A$  to a polarized abelian variety.
Motivated by their work,   we consider more generally \emph{cohomological rank functions} -- defined in a similar way -- of a bounded complex $\F$  of coherent sheaves on a polarized abelian variety $(A,\l)$ defined over an algebraically closed  field of characteristic zero.  As it turns out, these functions often encode interesting
 geometric information.  The purpose of this paper is to establish some general structure results  about them and show some examples of application.

Let $L$ be an ample line bundle on an abelian variety $A$, let $\l=c_1(L) $ and let $\varphi_\l:A\rightarrow \widehat A$ be the corresponding isogeny. The cohomological rank functions of $\F\in \mathrm{D}^b(A)$ with respect to the polarization $\l$ are initially defined (see Definition \ref{def} below) as certain continuous rational-valued functions
\[h^i_{\F,\l}:\mathbb Q\rightarrow\mathbb Q^{\ge 0}. \>\footnote{In the present context the above-mentioned  continuous rank function of Barja-Pardini-Stoppino  is recovered as $h^0_{f_*M,\, \l}$ (see the notation above).} \]
The definition of these functions is peculiar to abelian varieties (and more generally to irregular varieties), as it uses the isogenies $\mu_b:A\rightarrow A$, \ $z\mapsto bz$. For  $x\in\mathbb Z$,  \ $h^i_{\F,\l}(x):=h^i_\F(x\l)$ coincides with the generic value of $h^i(A,\F\otimes L^x)$, for $L$ varying among all line bundles representing $\l$. This is extended to all $x\in\mathbb Q$ using the  isogenies $\mu_b$. In fact the rational numbers $h^i_\F(x\l)$  can be interpreted as generic cohomology ranks of the $\mathbb Q$-twisted coherent sheaf (or, more generally, $\mathbb Q$-twisted complex of coherent sheaves) $\F\langle x\l\rangle$ (in the sense of Lazarsfeld \cite[\S6.2A]{laz2}).

The above functions are closely related to the Fourier-Mukai transform $\Phi_{\mathcal P}:\mathrm{D}^b(A)\rightarrow \mathrm{D}^b(\widehat A)$ associated to the Poincar\'e line bundle and our first point consists in exploiting systematically such relation. We prove the following transformation formula (Proposition \ref{inversion-a} below)
\begin{align}\label{align}
h^i_{\F}(x\l)\> = &
\> \frac{(-x)^g}{\chi(\l)}h^i_{\varphi_\l^*\Phi_{\mathcal P}(\F)}(-\frac{1}{x}\l)&\quad\hbox{for $x\in \mathbb Q^-$}\\
h^i_{\F}(x\l) \> = &
\> \frac{x^g}{\chi(\l)}h^{g-i}_{\varphi_\l^*\Phi_{\mathcal P^\vee}(\F^\vee)}(\frac{1}{x}\l)&\quad\hbox{for $x\in \mathbb Q^+$}
\end{align}

This has several consequences, summarized in the following theorem. The proof and discussion of the various items are found in Sections 2,3 and 4.
\begin{theoremalpha}\label{summary} Let $\F\in \mathrm{D}^b(A)$ and $i\in\mathbb Z$. Let $g=\dim A$.\\
\emph{(1) (Corollaries \ref{inversion}, \ref{inversion-Q}, \ref{continuity}.) } For each $x_0\in\mathbb Q$ there are $\epsilon^-,\epsilon^+>0$ and two \emph{(explicit, see below)} polynomials $P_{i,\, \F,\,x_0}^+,P_{i,\, \F,\,x_0}^-\in\mathbb Q[x]$ of degree $\le g$ such that $P_{i,\, \F,\,x_0}^+(x_0)=P_{i,\, \F,\,x_0}^-(x_0)$ and
\begin{eqnarray*}h^i_{\F}(x\l)=&P_{i,\,\F,\,x_0}^-(x)&\quad\hbox{for $x\in (x_0-\epsilon^-,x_0]\cap\mathbb Q$} \\
 h^i_{\F}(x\l)=&P^+_{i,\, \F,\,x_0}(x)&\quad\hbox{for $x\in [x_0,x_0+\epsilon^+)\cap\mathbb Q$}
 \end{eqnarray*}

\noindent \emph{(2) (Proposition \ref{derivatives})} Let $k<g$ and $x_0\in\mathbb Q$. If the function $h^i_{\F,\l}$ is strictly of class ${\mathcal C}^k$ at $x_0$ then the jump locus $J^{i+}(\F\langle x_0\rangle)$ \emph{(see \S4 for the definition)} has codimension $\le  k+1$.

 \noindent \emph{(3) (Theorem \ref{c0})} The function $h^i_{\F,\l}$ extends to a continuous function $h^i_{\F,\l}:\mathbb R\rightarrow \mathbb R^{\ge 0}$.
 \footnote{ This theorem provides partial answers to some questions raised,   in the specific case of the  above mentioned continuous rank functions $h^0_{f_*M,\, \l}$, in \cite{bps}, e.g. Question 8.11. We also point out that for such functions item (3) of the present Theorem, as well as some additional properties, were already proved  in \emph{loc. cit.} via different methods.}
 \end{theoremalpha}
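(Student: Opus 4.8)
The plan is to deduce the continuous extension from the local polynomial description in item (1) together with the transformation formula \eqref{align}, the point being to upgrade the \emph{pointwise} information available at rational arguments to a \emph{uniform} control that survives the passage to irrational arguments. First I would reduce to a local, bounded statement: fix a bounded interval $I=[a,b]$ with rational endpoints; since $\mathbb{Q}$ is dense in $\mathbb{R}$ and $h^i_{\F,\l}\ge 0$, it suffices to prove that $h^i_{\F,\l}$ is uniformly continuous on $\mathbb{Q}\cap I$, for then it extends uniquely to a continuous function $I\to\mathbb{R}^{\ge 0}$ (the values of the extension being limits of nonnegative rationals), and these extensions glue along $\mathbb{R}$. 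By item (1), at each rational $x_0$ the function agrees, on a one-sided neighbourhood, with polynomials $P^{-}_{i,\F,x_0},P^{+}_{i,\F,x_0}$ of degree $\le g$ satisfying $P^{-}_{i,\F,x_0}(x_0)=P^{+}_{i,\F,x_0}(x_0)$; in particular $h^i_{\F,\l}$ is \emph{already} continuous at every rational point, so the entire difficulty is concentrated in the behaviour near irrational arguments.

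The key step, and the main obstacle, is to establish that $h^i_{\F,\l}$ is \emph{locally bounded}, i.e. bounded on $\mathbb{Q}\cap I$, with uniform control of the local data. Item (1) is purely local at rational arguments, and the radii $\epsilon^{\pm}$ and the polynomials depend on $x_0$; indeed a function that is two-sided polynomial at every rational point need not extend continuously to $\mathbb{R}$ unless one rules out wild oscillation of the local polynomials as the argument is approached through rationals of growing denominator. To obtain the needed bound I would estimate the generic cohomological ranks defining $h^i_{\F,\l}(\tfrac{a}{b}\l)$ by a Hilbert-polynomial type argument: after the normalisation by $b^{2g}=\deg\mu_b$ built into the definition of $h^i_{\F,\l}$, these ranks are dominated by $C(1+|x|)^g$ with $C$ independent of the denominator $b$, which gives a uniform bound on $\mathbb{Q}\cap I$. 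The transformation formula \eqref{align} is then used to make the local polynomials explicit: on a one-sided neighbourhood of $x_0$ the value $h^i_{\F,\l}$ is, after the substitution $x\mapsto -1/x$ (resp. $x\mapsto 1/x$) and up to the factor $(-x)^g/\chi(\l)$ (resp. $x^g/\chi(\l)$), a generic cohomological rank of the Fourier--Mukai transform $\varphi_\l^*\Phi_{\mathcal P}(\F)$ (resp. $\varphi_\l^*\Phi_{\mathcal P^\vee}(\F^\vee)$). Thus the coefficients of the degree $\le g$ polynomials $P^{\pm}_{i,\F,x_0}$ are expressible through finitely many numerical invariants of these (fixed) transforms, and the uniform bound pins them down uniformly over the compact interval $I$, yielding a uniform bound on the one-sided slopes.

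Finally I would assemble these pieces into uniform continuity. Two local polynomials that both compute $h^i_{\F,\l}$ on a common subinterval must coincide (a polynomial identity forced on an interval of rationals), so the one-sided descriptions are mutually consistent; combining this compatibility with the uniform bound on the one-sided slopes and with continuity at every rational point gives a common modulus of continuity for $h^i_{\F,\l}$ on $\mathbb{Q}\cap I$. The resulting continuous extensions are nonnegative and patch together to the desired $h^i_{\F,\l}:\mathbb{R}\to\mathbb{R}^{\ge 0}$. I expect the genuinely delicate part to be the uniform estimate of the second paragraph, namely the local boundedness together with the uniform control of the local polynomials as $x_0$ varies over a compact interval, since the reduction and the gluing are formal and continuity at rational arguments is already contained in item (1).
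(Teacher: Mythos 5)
Your proposal correctly isolates the danger --- one-sided polynomiality at every rational point does not by itself force a continuous extension --- but the mechanism you offer to neutralise it does not work, and it is precisely here that the paper has to do real work. You argue: (i) the values $h^i_{\F}(x\l)$ are uniformly bounded on $\mathbb Q\cap I$, and (ii) the coefficients of $P^{\pm}_{i,\F,x_0}$ are ``expressible through finitely many numerical invariants of these (fixed) transforms'', so the bound on values pins down the one-sided slopes. Step (ii) is false as stated: by Corollary \ref{inversion-Q} the polynomials at $x_0=\frac{a}{b}$ are (rescaled) Hilbert polynomials of $\varphi_\l^*R^{j}\Phi_{\mathcal P}(\mu_b^*\F\otimes L^{ab})$ and of its dual counterpart, and these sheaves \emph{change with the denominator $b$}; there is no finite list of fixed invariants controlling them as $x_0$ ranges over $\mathbb Q\cap I$. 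Moreover a bound on the values does not bound the slopes, because each $P^{\pm}_{i,\F,x_0}$ is only known to agree with $h^i_{\F,\l}$ on an interval of length $\epsilon^{\pm}(x_0)$, and nothing in item (1) prevents these radii from shrinking to $0$ as the denominator of $x_0$ grows (cf.\ Remark \ref{eff-serre}: $\epsilon^{\pm}$ is a Serre-vanishing threshold for sheaves depending on $x_0$); a degree-$g$ polynomial bounded by $C$ on an interval of length $\epsilon$ can have derivative of order $C/\epsilon$ there. Even step (i) is not a routine Hilbert-polynomial estimate for the intermediate cohomologies $h^i$ with $0<i<\dim\F$.

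What the paper actually proves (Theorem \ref{c0}) is a uniform one-sided Lipschitz estimate on the \emph{increments}: for a pure sheaf $\F$ of dimension $n$, taking $M$ divisible enough and a general divisor $D\in|M^2\epsilon L|$, the exact sequence $0\to\mu_M^*\F\otimes L^{M^2x}\to\mu_M^*\F\otimes L^{M^2(x+\epsilon)}\to\mu_M^*\F\otimes L^{M^2(x+\epsilon)}|_D\to0$ bounds $h^i_{\F}((x+\epsilon)\l)-h^i_{\F}(x\l)$ by $\epsilon$ times a quantity controlled, via induction on $\dim\F$ and the effective bounds of Lemma \ref{effectivebound}, uniformly on compact intervals (the estimates (\ref{1})--(\ref{3})). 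This Lipschitz bound, not boundedness of the values, is what yields uniform continuity on $\mathbb Q$ and hence the extension; the general case is then reached through the torsion filtration and the hypercohomology spectral sequence. Your proposal is missing this geometric input, and it also leaves item (2) of the theorem (the jump-locus/Chern-character comparison of Proposition \ref{derivatives}) entirely unaddressed.
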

    It follows from (1)  that for $x_0\in\mathbb Q$ the function $h^i_{\F,\l}$ is smooth at $x_0$ if and only if the two polynomials  $P^-_{i,\,\F,\, x_0}$ and $P^+_{i,\,\F,\, x_0}$ coincide. If this is not the case $x_0$ is called a \emph{critical point}.

    It turns out (Corollary \ref{inversion}) that for $x_0\in\mathbb Z$ the two polynomials $P^-_{i,\,\F\,\,x_0}(x)$ and $P^+_{i,\,\F,\,x_0}(x)$ are obtained   from  the Hilbert polynomials (with respect to the polarization $\l$) of the two coherent sheaves $\G^{i,+}_{x_0}:=\varphi_\l^*R^i\Phi_{\mathcal P}(\F\otimes L^{x_0})$  and $\G^{i,-}_{x_0}:=\varphi_\l^*R^{g-i}\Phi_{\mathcal P^\vee}(\F^\vee\otimes L^{-x_0})$ in the following way:
  \begin{align}\label{align2}P^-_{i,\,x_0,\F}(x)\> = &
\> \frac{(-x)^g}{\chi(\l)}\chi_{\G^{i,+}_{x_0}}(-\frac{1}{x}\l)\\
P^+_{i,\,x_0,\F}(x)\>=&\> \frac{x^g}{\chi(\l)}\chi_{\G^{i,-}_{x_0}}(\frac{1}{x}\l)\>.
  \end{align}
  For  non-integer $x_0\in \mathbb Q$ the two polynomials  $P^-_{i,\,\F\,\,x_0}(x)$ and $P^+_{i,\,\F,\,x_0}(x)$  have a similar description after reducing to the integer case (Corollary \ref{inversion-Q}).
   Thus item (2) of Theorem \ref{summary} tells that, for $x_0\in\mathbb Q$,  the first $k$ coefficients
    of the
   polynomials $P^-_{i,\,x_0,\F}(x)$ and $P^+_{i,\,x_0,\F}(x)$
  coincide  as soon as  the rank function \ $\Pic0 A\rightarrow \mathbb Z^{\ge 0}$ defined by  $\alpha\mapsto h^i(\F\otimes L^{x_0}\otimes P_\alpha)$  has jump locus of codimension $\ge  k+1$. In this last formulation  we are implicitly assuming that $x_0$ is integer but for rational $x_0$ the situation is completely similar. However there might be irrational critical points (see e.g. Example \ref{irrational}), and at present we lack any similar interpretation for them.

In Section 5 we relate cohomological rank functions with the notions of GV, M-regular and IT(0)-sheaves, which are extended here to the $\mathbb Q$-twisted setting. We provide  formulations of Hacon's results (\cite{hac}), and some related ones, which are  simpler and more convenient even for usual  sheaves. Finally, in Section 6 we point out some integral properties of cohomological rank functions.

 It seems that  the critical points of the function
  and the  polynomials $P^-_{i,\> \F,\, x_0}$ and $P^+_{i,\F,x_0}$ are interesting and sometimes novel invariants in many concrete geometric situations. We exemplify this in the following two applications.

 \noindent{\bf Application to GV-subschemes. } Our first example concerns GV-subschemes of principally polarized abelian varieties (here will assume that the ground field is $\mathbb C$). This notion (we refer to Section 7 below for the definition and basic properties) was introduced in \cite{minimal} in the attempt of providing a Fourier-Mukai approach to the minimal class conjecture (\cite{debarre}), predicting that the only effective algebraic cycles representing the minimal classes $\frac{\underline\theta^{g-d}}{(g-d)!}\in H^{2(g-d)}(A,\mathbb Z)$ are (translates of) the  subvarieties $\pm W_d(C)$ of Jacobians $J(C)$, and  $\pm F$, the Fano surface in the intermediate  Jacobian of a cubic threefold.

  It is known that the subvarieties $W_d(C)$ of Jacobians, as well as the Fano surface
   (\cite{ho1}) are GV-subschemes  and that, on the other hand, geometrically non-degenerate GV-subschemes have minimal classes (\cite{minimal}). Therefore it was conjectured in \emph{loc. cit.} that geometrically non-degenerate GV-subschemes are either (translates of) $\pm W_d(C)$ or $\pm F$ as above.  Denoting $g$ the dimension of the p.p.a.v. and $d$ the dimension of the subscheme, this is known only in a few cases: (i) for $d=1$ and $d=g-2$ (\emph{loc. cit.});
    (ii)  for $g=5$,  settled in the recent work \cite{cps},  (iii)  for Jacobians and   intermediate Jacobians of generic cubic threefolds,  as consequences of the main results of respectively \cite{debarre} and \cite{ho2}. In the recent work \cite{s} it is proved that geometrically non-degenerate GV-subschemes are reduced and irreducible and that  the geometric genus of their desingularizations is the expected one, namely $g\choose d$.

  As an application of cohomological rank functions we prove that the Hilbert polynomial as well as all  $h^i(\OO_X)$'s are the expected ones:
 \begin{theoremalpha}\label{introGV} Let $X$ be geometrically non-degenerate  GV-subvariety of dimension $d$ of a principally polarized complex abelian variety $(A,\theta)$. Then:

\noindent (1) (Theorem \ref{gv1}) \ $\chi_{\OO_X}(x\underline\theta)=\sum_{i=0}^d{g\choose i}(x-1)^i$.

\noindent (2) (Theorem \ref{hodgenumber}) $h^i(\OO_X)={g\choose i}$ for all $i=1,\dots, d$.

 \end{theoremalpha}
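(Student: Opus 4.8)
The plan is to read off both statements from the behaviour of the cohomological rank function $h^0_{\OO_X,\underline\theta}$ near the integer point $x_0=1$, using the transformation formula (\ref{align}) together with the defining Fourier--Mukai property of a GV-subscheme. Throughout, $L$ denotes the principal polarization with $c_1(L)=\underline\theta$, so that $\chi(\underline\theta)=1$.

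For (1), I would first record that, since $X$ is a GV-subscheme, the twisted ideal $\I_X\otimes L$ is a GV-sheaf; by the $\Q$-twisted theory of Section 5, tensoring a GV-sheaf with the ample $\underline\theta$ yields an $\mathrm{IT}(0)$ object, so $\OO_X\langle x\underline\theta\rangle$ is $\mathrm{IT}(0)$ for $x$ slightly larger than $1$. Hence the right-hand polynomial of $h^0_{\OO_X,\underline\theta}$ at $x_0=1$ (Theorem \ref{summary}(1)) is exactly the Hilbert polynomial $P^+_{0,\OO_X,1}(x)=\chi_{\OO_X}(x\underline\theta)$, the object to be computed. By the explicit description (\ref{align2}), this same branch equals $x^g\,\chi_{\G^{0,-}_1}(\tfrac1x\underline\theta)$, where $\G^{0,-}_1=\varphi_{\underline\theta}^*R^{g}\Phi_{\mathcal P^\vee}(\OO_X^\vee\otimes L^{-1})$. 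The theorem is thereby reduced to identifying $\chi_{\G^{0,-}_1}(s\underline\theta)$.

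This identification of the transform is the heart of the matter and the step I expect to be the main obstacle: the minimal class of $X$ fixes only the leading (respectively lowest-order) term, whereas the intermediate coefficients are invisible from the class alone. Two pieces of data are immediate, namely that the generic rank of $\G^{0,-}_1$ is $1$ (it is the transform of the twisted ideal of a geometrically non-degenerate subscheme) and that $\chi_{\G^{0,-}_1}(s\underline\theta)$ vanishes to order exactly $g-d$ at $s=0$ (this is forced by $\deg\chi_{\OO_X}(x\underline\theta)=d$). To reach the remaining coefficients I would invoke the Fourier--Mukai self-duality of GV-subschemes (\cite{minimal}, Section 7): under the principal isomorphism $A\cong\widehat A$ the transform above is built from the complementary dual GV-subscheme $\widehat X$, of dimension $g-1-d$, and its minimal class. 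The transformation formula then yields a functional equation coupling the generating polynomial of $X$ with that of $\widehat X$; solving it, using that Fourier--Mukai duality is an involution exchanging $X$ and $\widehat X$ and that both minimal classes are known (the base cases $d=0$, where $\chi_{\OO_X}\equiv1$, and $d=g-1$, a theta divisor, where $\chi_{\OO_X}(x\underline\theta)=x^g-(x-1)^g$, being immediate), gives $\chi_{\G^{0,-}_1}(s\underline\theta)=\sum_{i=0}^d\binom{g}{i}s^{g-i}(1-s)^i$. Substituting $s=1/x$ and multiplying by $x^g$ produces exactly $\sum_{i=0}^d\binom{g}{i}(x-1)^i$.

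For (2), the plan is to sandwich each $h^i(\OO_X)$ between $\binom{g}{i}$ from both sides and invoke (1). For the lower bound I would use geometric non-degeneracy: $H^1(\OO_A)\hookrightarrow H^1(\OO_X)$ (otherwise $X$ would lie in a translate of a proper abelian subvariety), and since $H^\bullet(\OO_A)=\wedge^\bullet H^1(\OO_A)$, a standard argument for subvarieties generating an abelian variety shows the restriction $H^i(\OO_A)\to H^i(\OO_X)$ is injective for $i\le d$, whence $h^i(\OO_X)\ge\binom{g}{i}$. For the upper bound I would use the GV property through the simplified form of Hacon's criterion established in Section 5, whose generic-vanishing estimates bound $h^i(\OO_X)$ from above; combining these with the Euler-characteristic identity $\sum_i(-1)^ih^i(\OO_X)=\sum_{i=0}^d(-1)^i\binom{g}{i}$ coming from (1) at $x=0$, and with the expected geometric genus $h^d(\OO_X)=\binom{g}{d}$ of \cite{s}, the only possibility compatible with the lower bounds is equality $h^i(\OO_X)=\binom{g}{i}$ for all $i$. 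The delicate point here is once more the upper bound: ruling out excess cohomology uses the full GV structure rather than the minimal class alone, and it is exactly the jump-locus control of Theorem \ref{summary}(2) that is designed for this.
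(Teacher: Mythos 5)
Your overall strategy for (1) --- reading $\chi_{\OO_X}(x\underline\theta)$ off the branches of $h^0_{\OO_X}(x\underline\theta)$ at $x_0=1$ --- is the paper's, and your observation that the right branch \emph{is} the Hilbert polynomial (because $\OO_X\langle x\underline\theta\rangle$ is IT(0) for $x>1$) is correct. But the step you yourself flag as the obstacle is not actually carried out, and the mechanism you propose for it does not work as stated. The ``functional equation'' coupling $\chi_{\OO_X}$ with $\chi_{\OO_{V(X)}}$ under the involution $x\mapsto x/(x-1)$ is degenerate: substituting one relation into the other returns a tautology, so the involution and the ``base cases'' $d=0$, $d=g-1$ determine nothing (there is no induction connecting different $d$), and the minimal classes of $X$ and $V(X)$ fix only leading coefficients. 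What closes the argument is the sheaf-level identification from \cite{minimal}, namely $\Phi_{\mathcal P}(\OO_X(\Theta))=\bigl(\I_{V(X)}(\Theta)\bigr)^\vee$, which gives $R^0\Phi_{\mathcal P}(\OO_X(\Theta))=\OO_A(-\Theta)$, $R^d\Phi_{\mathcal P}(\OO_X(\Theta))=\omega_{V(X)}(-\Theta)$ and all other $R^i=0$. Feeding this into Corollary \ref{inversion}, the branch of $h^0_{\OO_X}$ to the left of $1$ is exactly $x^g$, the only other nonzero rank function near $1$ is $h^d$, whose contribution is divisible by $(x-1)^{d+1}$ because $\dim V(X)=g-d-1$; since $\chi_{\OO_X}(x\underline\theta)$ has degree $d$, it must be the order-$d$ Taylor truncation of $x^g$ at $x=1$. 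Your ``two immediate pieces of data'' are precisely the needed degree constraints, so this gap is repairable, but the repair uses the full transform of $\OO_X(\Theta)$, not its minimal class.

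The gap in (2) is more serious: your sandwich has no upper bound. The GV property of $\omega_X$ constrains the \emph{codimensions} of the loci $V^i(\omega_X)$, not the values $h^i(\omega_X)$ at the origin, which may well jump; likewise Proposition \ref{derivatives} controls codimensions of jump loci, not ranks. The input $h^0(\omega_{X'})={g\choose d}$ from \cite{s} concerns a desingularization and yields only $h^d(\OO_X)\ge{g\choose d}$ unless one already knows the singularities are rational --- which the paper deduces \emph{from} Theorem \ref{hodgenumber} (Corollary \ref{rational-singularity}), so invoking it here is circular. And lower bounds alone plus the Euler characteristic cannot finish, because the signs alternate: e.g.\ for $d=2$ the values $h^1=g+1$, $h^2={g\choose 2}+1$ are compatible with both the lower bounds and $\chi(\OO_X)=1-g+{g\choose 2}$. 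The paper's route is to upgrade injectivity to isomorphism: Corollary \ref{gv-strong} shows $R^i\Phi_{\mathcal P^\vee}(\omega_X)=0$ for $i\ne 0,d$ and $R^d\Phi_{\mathcal P^\vee}(\omega_X)=k(\hat e)$, whence (Lemma \ref{cup}) the maps $\Lambda^iH^1(\OO_A)\rightarrow H^i(\OO_X)$ are \emph{isomorphisms} for $i<d$ and injective for $i=d$; only then does $\chi(\OO_X)=\sum_{i=0}^d(-1)^i{g\choose i}$ from part (1) force $h^d(\OO_X)={g\choose d}$. Also note that the injectivity itself is not ``standard'' here, since $X$ is a priori only reduced and Cohen--Macaulay, not smooth, so Hodge-theoretic arguments on $X$ are unavailable; the paper derives it from the same Fourier--Mukai computation.
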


The proof of (1) is based on the study of the function $h^0_{\OO_X}(x\underline\theta)$ at the highest critical point (which turns out to be $x=1$).  (2) follows from (1) via another argument involving the Fourier-Mukai transform.

As a corollary of $(2)$, combining with the results of \cite{s} and \cite{cps}, we have

\begin{propositionalpha} \emph{(Corollary \ref{rational-singularity}).} A 2-dimensional geometrically non-degenerate GV-subscheme  is normal with rational singularities.
\end{propositionalpha}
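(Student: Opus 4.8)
The plan is to reduce the statement, via the Leray spectral sequence of a resolution, to two purely geometric facts about $X$ and then feed in the numerology produced by Theorem \ref{hodgenumber}. Fix a resolution $\pi\colon\tilde X\to X$ with $\tilde X$ a smooth projective surface, an isomorphism over the smooth locus of $X$; by \cite{s} we may assume $X$ reduced and irreducible, and $\tilde X$ satisfies $h^0(\tilde X,\omega_{\tilde X})=\binom{g}{2}$, whence $h^2(\tilde X,\OO_{\tilde X})=\binom{g}{2}$ by Serre duality. By Theorem \ref{hodgenumber} we also have $h^1(\OO_X)=g$ and $h^2(\OO_X)=\binom{g}{2}$. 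Granting for the moment that $X$ is normal, so that $\pi_*\OO_{\tilde X}=\OO_X$, $R^q\pi_*\OO_{\tilde X}=0$ for $q\ge 2$ (the fibres of $\pi$ being at most curves), and $R^1\pi_*\OO_{\tilde X}$ is a skyscraper supported on the finite set $\mathrm{Sing}(X)$, the low-degree Leray sequence reads
\[ 0\to H^1(\OO_X)\to H^1(\OO_{\tilde X})\to H^0(R^1\pi_*\OO_{\tilde X})\to H^2(\OO_X)\to H^2(\OO_{\tilde X})\to 0, \]
the final arrow being surjective because $H^1(R^1\pi_*\OO_{\tilde X})=0$. Its source and target have equal dimension $\binom{g}{2}$, so it is an isomorphism, and the sequence collapses to give
\[ \ell\big(R^1\pi_*\OO_{\tilde X}\big)=\dim H^0(R^1\pi_*\OO_{\tilde X})=h^1(\OO_{\tilde X})-h^1(\OO_X)=q(\tilde X)-g. \]
Thus, modulo normality, $X$ has rational singularities \emph{if and only if} $q(\tilde X)=g$.

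For normality I would verify Serre's criterion $S_2+R_1$. The $S_2$ (indeed Cohen--Macaulay) property should follow from the defining GV condition: the Fourier--Mukai transform $\Phi_{\mathcal P}(\OO_X)$ being concentrated in the single degree $g-2$ forces, by Grothendieck--Serre duality, the dual object to be a sheaf, which translates into the Cohen--Macaulayness of $X$. The regularity in codimension one, $R_1$, amounts to $\mathrm{Sing}(X)$ being finite; this I would extract from the geometric non-degeneracy of $X$ together with the structural results of \cite{s} and \cite{cps}. One direction of the remaining input is easy: composing the resolution with the inclusion yields $a\colon\tilde X\to A$, and since $X$ is geometrically non-degenerate, $a$ does not factor through any proper abelian subvariety, so $a^*\colon H^0(A,\Omega^1_A)\to H^0(\tilde X,\Omega^1_{\tilde X})$ is injective and $\mathrm{Alb}(\tilde X)\twoheadrightarrow A$, giving $q(\tilde X)\ge g$ (consistent with $\ell(R^1\pi_*\OO_{\tilde X})\ge 0$ above).

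The hard part, and the genuine crux of the whole argument, is the reverse inequality $q(\tilde X)\le g$: that $\tilde X$ carries no holomorphic one-forms beyond those pulled back from $A$, equivalently that $\mathrm{Alb}(\tilde X)\to A$ is an isogeny. This is precisely what the cohomological rank functions cannot see, since by the displayed identity it is equivalent to the very conclusion $\ell(R^1\pi_*\OO_{\tilde X})=0$ we are after; it must therefore be supplied by the minimal-class geometry of non-degenerate GV-subvarieties, and I would obtain it from \cite{s} and \cite{cps}, where the Hodge numbers $h^{i,0}(\tilde X)=\binom{g}{i}$ of the desingularization are computed, the case $i=1$ giving $q(\tilde X)=g$. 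Granting this, the identity above yields $R^1\pi_*\OO_{\tilde X}=0$, so $X$ is normal with rational singularities. A more intrinsic route, closer to the Fourier--Mukai methods of this paper, would bypass both inputs at once by proving directly that $\mathbf{R}\pi_*\OO_{\tilde X}\cong\OO_X$: by Grauert--Riemenschneider $\pi_*\omega_{\tilde X}$ has vanishing higher direct images and $a_*\omega_{\tilde X}$ is a GV-sheaf by Hacon's theorem, and comparing its transform with that of $\omega_X$—both pinned down up to the Hilbert polynomial computed in Theorem \ref{gv1}—should force the trace map $\pi_*\omega_{\tilde X}\to\omega_X$ to be an isomorphism, delivering normality and rationality simultaneously.
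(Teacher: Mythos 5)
Your reduction is essentially the paper's: granting normality, your Leray computation correctly yields $\ell(R^1\pi_*\OO_{\tilde X})=q(\tilde X)-g$ from $h^1(\OO_X)=g$ and $h^2(\OO_X)=h^2(\OO_{\tilde X})={g\choose 2}$, and your sketch of normality ($S_2$ from Cohen--Macaulayness, which is in \cite{minimal}, plus $R_1$ from the smoothness in codimension one of irreducible theta divisors as in \cite{cps}) is exactly the paper's Lemma \ref{normality}. The gap is at the step you yourself identify as the crux: the inequality $q(\tilde X)\le g$. You propose to read off $h^{1,0}(\tilde X)={g\choose 1}=g$ from \cite{s} and \cite{cps}, but those references do not compute the irregularity of the desingularization; what \cite{s} provides (and what this paper quotes) is only reducedness, irreducibility, and the \emph{top} Hodge number $h^0(\tilde X,\omega_{\tilde X})={g\choose d}$, i.e.\ the geometric genus. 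Theorem \ref{hodgenumber} concerns $h^i(\OO_X)$ for the possibly singular $X$ itself and bounds $q(\tilde X)$ only from below. So the key input is not in the literature you cite, and --- as you observe --- it is equivalent to the conclusion $R^1\pi_*\OO_{\tilde X}=0$, so the argument is circular exactly there.

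The paper supplies this missing ingredient by a genuinely new argument: if $q(\tilde X)>g$, then since $X$ is normal one has $h^0(R^1\mu_*\OO_{\tilde X})>0$, hence there is a $\mu$-contracted curve $C\subset\tilde X$ on which some holomorphic $1$-form $\omega_0$ does not vanish identically. A local computation of vanishing orders along $C$ (if $C$ appears with multiplicity $m$ in the fibre, every $2$-form pulled back from $A$ vanishes to order $\ge 2m-1$ along $C$, whereas $\omega_0\wedge\tau^*\eta$ vanishes to order exactly $m-1$ for a suitable $1$-form $\eta$ on $A$) produces a section of $K_{\tilde X}$ outside $\tau^*H^0(A,\Omega^2_A)$, contradicting Schreieder's theorem that $H^0(\tilde X,K_{\tilde X})=\tau^*H^0(A,\Omega^2_A)$. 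Your alternative Fourier--Mukai route is in fact close to the paper's final step --- it shows the cokernel of $\mu_*\omega_{\tilde X}\hookrightarrow\omega_X$ is M-regular with $V^0$ a proper subset of $\widehat A$, hence zero --- but that step also consumes $q(\tilde X)=g$ (to identify $H^1(\mu_*\omega_{\tilde X})$ with $H^1(\omega_X)$), and the Hilbert polynomial of Theorem \ref{gv1} alone does not pin down the transform finely enough to bypass it. Without the multiplicity argument, or an equivalent substitute, the proof does not close.
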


 \noindent{\bf Application to multiplication maps of global sections of line bundles and normal generation of abelian varieties. } Finally we illustrate the interest of cohomological rank functions in another example: the ideal sheaf of one (closed) point $p\in A$. The functions
$h^i_{\I_p}(x\l)$
 seem to be highly interesting ones, especially in the perspective of basepoint-freeness criteria for primitive line bundles on abelian varieties. While we defer this to a subsequent paper, here we content ourselves to point out an elementary -- but surprising -- relation with multiplication maps of global sections of powers of line bundles.
 We consider the critical point
 \[\beta({\l})
 =\inf \{x\in\mathbb Q\>|\> h^1_{\I_p}(x\l)=0\}\]
 (as the notation suggests, such notion does not depend on $p\in A$).
 A standard argument shows that in any case $\beta(\l)\le 1$ and $\beta(\l)=1$ if and only if the polarization $\l$ has base points, i.e. a line bundle $L$ representing $\l$ (or, equivalently, all of them) has base points. Therefore, given a rational number $x=\frac{a}{b}$, it is suggestive to think that the inequality $\beta(\l)<x$ holds if and only if ``the rational polarization $x\l$ is basepoint-free". Explicitly, this means the following: let $\mu_b:A\rightarrow A$ be the multiplication-by-$b$ isogeny. Then, as it follows from the definition of cohomological rank functions, $\beta(\l)<\frac{a}{b}$ means that the finite scheme $\mu_b^{-1}(p)$ imposes independent conditions to all  translates of a given line bundle $L^{ab}$ with $c_1(L)=\l$.
  In turn $\frac{a}{b}=\beta(\l)$ means that $\mu_b^{-1}(p)$ imposes dependent conditions to a proper closed subset of translates of the line bundle $L^{ab}$ as above.\footnote{Writing $1=\frac{b}{b}$ one recovers the usual notions of basepoint-freeness and base locus.} At present we don't know how to compute, or at least bound efficiently, the invariant $\beta(\l)$ of a \emph{primitive} polarization $\l$ (except for principal polarizations of course).

Here is one of the reasons why one is lead to consider the number $\beta(\l)$. Let $\underline n$ be another polarization on $A$. We  assume that  $\underline n$ is basepoint-free. Let $N$ be a line bundle representing $\underline n$ and let $M_N$ be the kernel of the evaluation map of global sections of $N$.
We consider the critical point
\[s({\underline n})=
\inf\{x\in\mathbb Q\>|\> h^1_{M_N}(x\underline n)=0\}\]
(again this invariant does not depend on the line bundle $N$ representing $\underline n$). Well known facts about the vector bundles $M_N$ yield that,
 given $x\in\mathbb Z^+$, $s(\underline n)\le x$ if and only if the multiplication maps of global sections
\begin{equation}\label{mult} H^0(N)\otimes H^0(N^x\otimes P_\alpha)\rightarrow H^0(N^{x+1}\otimes P_\alpha)
\end{equation} are surjective for general $\alpha\in \widehat A$ and, furthermore,  $s(\n)< x$  if and only if the surjectivity holds for all $\alpha\in \widehat A$. Now the cohomological rank function leads to consider a "fractional" version of the maps (\ref{mult}). Writing $x=\frac{a}{b}$, these are the multiplication maps of global sections
\begin{equation}\label{mult-frac1}H^0(N)\otimes H^0(N^{ab}\otimes P_\alpha)\rightarrow H^0(\mu_b^*(N)\otimes N^{ab}\otimes P_\alpha)
\end{equation}
obtained by composing with the natural inclusion $H^0(N)\hookrightarrow H^0(\mu_b^*N)$. It follows
  that $s(\underline n)\le \frac{a}{b}$ if and only if the maps (\ref{mult-frac1}) are surjective for general $\alpha\in\widehat A$.
 The strict inequality holds if the surjectivity holds for all $\alpha\in \widehat A$. \footnote{Again a simple computation shows that when $x$ is an integer, writing $x=\frac{xb}{b}$ one recovers the usual notions of surjectivity of the maps (\ref{mult}) for every (resp. for general) $\alpha\in\widehat A$.}
As a simple consequence of the formulas (\ref{align}) applied to $\n=h\l$ we have
\begin{theoremalpha}\label{b-s}Let $h$ be an integer such that the polarization $h\l$ is basepoint-free \emph{(hence $h\ge 1$ if $\l$ is basepoint-free, $h\ge 2$ otherwise)}. Then
\begin{equation}\label{mult-frac}s(h {\l})= \frac{\beta({\l})}{h-\beta({\l})}\>.
\end{equation}
\end{theoremalpha}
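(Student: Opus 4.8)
The plan is to apply the second transformation formula in (\ref{align}) to the vector bundle $M_N$ (with respect to the polarization $\n=h\l$) and to the ideal sheaf $\I_p$ (with respect to $\l$), reducing both critical points to transition points of $h^0$-functions of ideal sheaves of kernels of isogenies, and then to match the two via the multiplication map $\mu_h$. Throughout I may take $N=L^h$, since $s(\n)$ is independent of the chosen representative of $\n$.

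First I would compute the Fourier--Mukai transform of $M_N^\vee$. Dualizing the evaluation sequence gives $0\to N^{-1}\to H^0(N)^\vee\otimes\OO_A\to M_N^\vee\to 0$. Since $N^{-1}$ satisfies $\mathrm{IT}$ of index $g$ and $\Phi_{\mathcal P^\vee}(\OO_A)=k(\widehat 0)[-g]$, applying $\Phi_{\mathcal P^\vee}$ produces a triangle whose two outer terms are concentrated in degree $g$, so that $\Phi_{\mathcal P^\vee}(M_N^\vee)=\mathrm{cone}\bigl(\widehat{N^{-1}}\to H^0(N)^\vee\otimes k(\widehat 0)\bigr)[-g]$. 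The connecting map is the fibre-evaluation at the origin, using $\widehat{N^{-1}}_{|\widehat 0}\cong H^g(N^{-1})\cong H^0(N)^\vee$; as this is surjective its cone is the kernel shifted, giving $\Phi_{\mathcal P^\vee}(M_N^\vee)\cong(\I_{\widehat 0}\otimes\widehat{N^{-1}})[1-g]$, the ideal sheaf of the origin in $\widehat A$ twisted by $\widehat{N^{-1}}$. The identical computation for $\I_p^\vee$ yields $\Phi_{\mathcal P^\vee}(\I_p^\vee)\cong(\I_{\widehat 0}\otimes\xi)[1-g]$ with $\xi\in\Pic0\widehat A$.

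Next I would feed these into (\ref{align}). Using $\varphi_{\n}^*\I_{\widehat 0}=\I_{\ker\varphi_{\n}}$ and the standard identity $\varphi_{\n}^*\widehat{N^{-1}}=H^0(N)^\vee\otimes N$ (from $\varphi_N^*\widehat N=H^0(N)\otimes N^{-1}$ and $\widehat{N^{-1}}\cong\widehat N^\vee$), the shift $[1-g]$ converts the $h^{g-1}$ on the right-hand side into an $h^0$, and the rank factor $\dim H^0(N)^\vee=\chi(\n)$ cancels the $1/\chi(\n)$, leaving $h^1_{M_N}(x\n)=x^g\,h^0_{\I_{Z}\otimes N}(\tfrac1x\n)$ for $x>0$, where $Z=\ker\varphi_{\n}$. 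Likewise $h^1_{\I_p}(x\l)=\tfrac{x^g}{\chi(\l)}h^0_{\I_{Z_\l}}(\tfrac1x\l)$ with $Z_\l=\ker\varphi_\l$. Both $h^0$-functions are monotone in their argument, so writing $\gamma:=\inf\{y>0:h^0_{\I_{Z}\otimes N}(y\n)>0\}$ and $\delta:=\inf\{y>0:h^0_{\I_{Z_\l}}(y\l)>0\}$ for their transition points, these identities give $s(h\l)=1/\gamma$ and $\beta(\l)=1/\delta$.

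Finally I would relate $\gamma$ to $\delta$. Since $\n=h\l$ we have $\varphi_{\n}=\varphi_\l\circ\mu_h$, hence $Z=\mu_h^{-1}(Z_\l)$ and $\I_{Z}\otimes N=\mu_h^*\I_{Z_\l}\otimes L^h$. Converting the polarization $\n$ to $\l$ and applying the twisting rule $h^0_{\G\otimes L^h,\l}(z)=h^0_{\G,\l}(z+h)$ together with the isogeny rule $h^0_{\mu_h^*\G,\l}(z)=h^{2g}h^0_{\G,\l}(z/h^2)$ for cohomological rank functions, I obtain $h^0_{\I_{Z}\otimes N}(y\n)=h^{2g}\,h^0_{\I_{Z_\l},\l}\!\bigl(\tfrac{y+1}{h}\bigr)$, so the transition occurs at $\gamma=h\delta-1$. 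Substituting $\delta=1/\beta(\l)$ gives $\gamma=\tfrac{h-\beta(\l)}{\beta(\l)}$ and therefore $s(h\l)=1/\gamma=\tfrac{\beta(\l)}{h-\beta(\l)}$. The step I expect to be the main obstacle is the transform computation: identifying the connecting map precisely as the surjective evaluation at the origin (so that the cone is $\I_{\widehat 0}$ rather than a direct sum), and then keeping the shifts, the rank factor, and the $\mu_h$-rescaling consistent so that the numerical factors cancel exactly as above.
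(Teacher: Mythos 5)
Your argument is correct, and it reaches the identity by a route that is recognizably dual to, but organized differently from, the paper's. The paper applies the \emph{negative}-direction formula of Proposition \ref{inversion-a} once, to $\I_e(N)$ with respect to $\n=h\l$: the key computation there is $\varphi_{\n}^*R^0\Phi_{\mathcal P}(\I_e(N))=M_N\otimes N^{-1}$ (the transform of $N\to N\otimes k(e)$ is the evaluation map), together with the observation that basepoint-freeness of $\n$ forces $\Phi_{\mathcal P}(\I_e(N))$ to be a sheaf in degree $0$; this converts $h^i_{\I_p}(y\n)$ for $y<1$ directly into $h^i_{M_N}$ at the M\"obius-transformed argument, giving $s(\n)=\beta(\n)/(1-\beta(\n))$ in one stroke, after which only the rescaling $\beta(h\l)=\beta(\l)/h$ is needed. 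You instead apply the \emph{positive}-direction formula twice, computing $\Phi_{\mathcal P^\vee}(M_N^\vee)\cong(\I_{\hat 0}\otimes\widehat{N^{-1}})[1-g]$ and $\Phi_{\mathcal P^\vee}(\I_p^\vee)\cong(\I_{\hat 0}\otimes\xi)[1-g]$ --- these are essentially the Grothendieck--Mukai duals of the paper's identity --- and you meet in the middle at the $h^0$-functions of $\I_{\ker\varphi_{\n}}\otimes N$ and $\I_{\ker\varphi_\l}$, matching them via $\varphi_{h\l}=\varphi_\l\circ\mu_h$. Your version buys a pleasant reinterpretation of $\beta(\l)$ and $s(\n)$ as the thresholds at which sections vanishing on the kernels of the polarization isogenies appear, at the cost of the extra $\mu_h$-bookkeeping; all your numerical factors and shifts check out. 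One small caveat, which is really an imprecision in the paper's own definition rather than in your proof: since $h^1_{M_N}(x\n)$ also vanishes for $x<-1$, the literal infimum defining $s(\n)$ should be read as the maximal critical point, i.e.\ $\sup\{x\mid h^1_{M_N}(x\n)>0\}$, which is what your identification $s(\n)=1/\gamma$ in fact computes; your monotonicity argument for the $h^0$-functions (legitimate, since both sheaves are torsion-free) then closes the loop.
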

Since $\beta(\l)\le 1$ it follows that
\[s(h {\l})\le \frac{1}{h-1}\]
and  equality holds  if and only if $\beta(\l)=1$, i.e. $\l$ has base points.

Surprisingly, this apparently unexpressive result summarizes, generalizes and improves what is known about the surjectivity of multiplication maps of global sections and projective normality  of line bundles on abelian varieties. For example, the case $h=2$ alone  tells that $s(2\l)\le 1$, with equality if and only if $\beta(\l)=1$, i.e. $\l$ has base points. In view of the above, this means that the multiplication maps (\ref{mult}) for a second power $N=L^2$ and $x=1$ are in any case surjective for general $\alpha\in\widehat A$, and in fact for all $\alpha\in\widehat A$ as soon as $\l$ is basepoint free. This is a classical result   which implies all  classical results on projective normality of abelian varieties proved via theta-groups by Mumford, Koizumi, Sekiguchi, Kempf,  Ohbuchi and others (see \cite{kempf} \S6.1-2, \cite{birke-lange} \S7.1-2 and references theiren,  see also \cite{sav} and \cite{pp1} for a theta-group-free treatment). We refer  to Section 8 below for more on this.

Finally if $\l$ is basepoint-free and $h=1$ the above Theorem tells that $\beta(\l)<\frac{1}{2}$ if and only if the multiplication maps (\ref{mult})  for $N=L$ and $x=1$ are surjective for all $\alpha\in \widehat A$. Using a well known argument, this implies

\begin{corollaryalpha} Assume that $\l$ is basepoint-free and $\beta(\l)<\frac{1}{2}$. Then $\l$ is projectively normal \emph{(this means that all line bundles $L\otimes P_\alpha$ are projective normal)}.
\end{corollaryalpha}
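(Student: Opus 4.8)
The plan is to reduce the statement to the case $h=1$ of Theorem~\ref{b-s} and then invoke the classical translation argument for normal generation on abelian varieties (Koizumi--Mumford--Kempf; see \cite{kempf}, \cite{birke-lange}). Since $\l$ is basepoint-free, Theorem~\ref{b-s} with $h=1$ gives
\[
s(\l)=\frac{\beta(\l)}{1-\beta(\l)},
\]
and the hypothesis $\beta(\l)<\frac{1}{2}$ yields $s(\l)<1$. As $s(\l)<1\le x$ for every integer $x\ge 1$, we have $s(\l)<x$ for all such $x$, so the characterization of $s(\l)$ recalled before Theorem~\ref{b-s} shows that the multiplication maps
\[
H^0(L)\otimes H^0(L^{x}\otimes P_\alpha)\longrightarrow H^0(L^{x+1}\otimes P_\alpha)
\]
are surjective for every integer $x\ge 1$ and every $\alpha\in\widehat A$. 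Writing $M_L$ for the kernel of the (surjective, as $\l$ is basepoint-free) evaluation map $H^0(L)\otimes\OO_A\to L$, this surjectivity is equivalent to the vanishing $H^1(M_L\otimes L^{x}\otimes P_\alpha)=0$ for all $x\ge 1$ and all $\alpha$; in particular, taking $\alpha=0$, one has $H^1(M_L\otimes L^{x})=0$ for every $x\ge 1$.

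Next I would observe that projective normality of every $L\otimes P_\alpha$ follows from that of $L$ alone. Indeed $\varphi_\l\colon A\to\widehat A$ is an isogeny, hence surjective, so for each $\alpha$ there is $x\in A$ with $\varphi_\l(x)=\alpha$ and therefore $L\otimes P_\alpha\cong t_x^{*}L$, where $t_x$ denotes translation by $x$. Since $t_x$ is an automorphism of $A$, it induces isomorphisms $H^0(L^{k})\cong H^0\big((L\otimes P_\alpha)^{k}\big)$ compatible with multiplication of sections; thus $L\otimes P_\alpha$ is projectively normal if and only if $L$ is. It therefore suffices to prove that $L$ is projectively normal, i.e. that the section ring $\bigoplus_{k\ge 0}H^0(L^{k})$ is generated in degree one. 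By the standard reduction this amounts to the surjectivity of $H^0(L)\otimes H^0(L^{k})\to H^0(L^{k+1})$ for all $k\ge 1$, equivalently $H^1(M_L\otimes L^{k})=0$ for all $k\ge 1$, which was established above. This proves the corollary.

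The argument is essentially formal once Theorem~\ref{b-s} is in hand; the only points requiring (minor) care are the translation bookkeeping in the second paragraph and the standard equivalence between degree-one generation of the section ring and surjectivity of the consecutive multiplication maps $H^0(L)\otimes H^0(L^{k})\to H^0(L^{k+1})$. I expect no serious obstacle: the entire geometric content is packaged in the identity $s(\l)=\beta(\l)/(1-\beta(\l))$ of Theorem~\ref{b-s}, which converts the threshold hypothesis $\beta(\l)<\frac{1}{2}$ into the inequality $s(\l)<1$, and from there the vanishing of the $M_L$-cohomology is immediate. Note that the strict inequality $s(\l)<x$ is what guarantees surjectivity for \emph{all} $\alpha$; here we only need the case $\alpha=0$, but the strictness is built into the reduction.
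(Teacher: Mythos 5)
Your proof is correct, and it reaches the conclusion by a slightly different final step than the paper. Both arguments begin identically: Theorem~\ref{b-s} with $h=1$ converts the hypothesis $\beta(\l)<\frac{1}{2}$ into $s(\l)<1$. From there the paper uses only the level-one consequence, namely that $s(\l)<1$ gives surjectivity of $H^0(L)\otimes H^0(L_\alpha)\rightarrow H^0(L^2_\alpha)$ for \emph{all} $\alpha\in\widehat A$, and then invokes the classical bootstrapping argument of Kempf and Birkenhake--Lange (the proof of Thm 6.8(c) in \cite{kempf}, resp.\ Theorem 7.3.1 in \cite{birke-lange}), which uses the isogeny trick to deduce surjectivity of the higher multiplication maps $H^0(L)\otimes H^0(L^k)\rightarrow H^0(L^{k+1})$ from the level-one data over all twists. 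You instead exploit the full strength of the invariant: since $s(\l)<1\le x$ for every integer $x\ge 1$, the characterization of $s$ recalled before Theorem~\ref{b-s} (equivalently, the IT(0) property of $M_L\langle x\l\rangle$ for $x>s(\l)$) already yields surjectivity of $H^0(L)\otimes H^0(L^x_\alpha)\rightarrow H^0(L^{x+1}_\alpha)$ at \emph{every} level and for every $\alpha$, so the classical bootstrapping is not needed and only the elementary translation bookkeeping $L\otimes P_\alpha\cong t_x^*L$ remains. Your route is self-contained modulo the threshold characterization of $s(\n)$ that the paper sets up in Section 8 (and which is also what the paper's own proof rests on), so there is no circularity; what it buys is the elimination of the external citation, at the cost of using the equivalence between $s(\l)<x$ and IT(0) of $M_L\langle x\l\rangle$ at all integer levels rather than just at $x=1$. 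Both proofs are valid.
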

 This is at the same time an explanation and a generalization of Ohbuchi's theorem (\cite{oh}) asserting that, given a polarization $\n$,   $2\n$ is projectively normal
 as soon as $\n$ is basepoint-free.

 Finally, we remark that, although the applications presented in this paper concern abelian varieties and their subvarieties, the study of cohomological rank functions can be applied to the wider context of \emph{irregular varieties},
 namely varieties having non-constant morphisms to an abelian varieties, say $f:X\rightarrow A$ (as mentioned above this is indeed the point of view of the paper \cite{bps}). Given
 an element $\F\in \mathrm{D}^b(X)$, this can be done by considering the cohomological rank functions of the complex $Rf_*\F$.

\subsection*{Acknowledgements}
We thank Federico Caucci, Rob Lazarsfeld, Luigi Lombardi and  Stefan Schreieder for useful comments and suggestions. We are especially grateful to Schreieder for pointing out a gap in Section 7 of a previous version of this paper.

\section{Notation and background material}

We work  on an algebraically  closed ground field of characteristic zero.

\noindent  A polarization \emph{$\l$} on an abelian variety is the class of an ample line bundle $L$ in $\mathrm{Pic} A/\Pic0 A$. The corresponding isogeny is denoted
  \[\varphi_\l:A\rightarrow \widehat A\]
  where $\widehat A:=\Pic0 A$. For $b\in\mathbb Z$
  \[\mu_b:A\rightarrow A\qquad z\mapsto bz\]
 denotes   the multiplication-by-$b$ homomorphism.

Let $A$ be a $g$-dimensional abelian variety.
 We denote  ${\mathcal P}$ the  Poincar\'e line bundle on $A\times \widehat A$. For $\alpha\in\widehat A$ the corresponding line bundle in $A$ is denoted by $P_\alpha$,  i.e. $P_\alpha={\mathcal P}_{|A\times\{ \alpha\}}$. We always denote  $\hat e$ the origin of $\hat{A}$.

 Let $\mathrm{D}^b(A)$ be  the bounded derived category of coherent sheaves on $A$ and denote by
 \[\Phi^{A\rightarrow \widehat A}_{\mathcal P}: \mathrm{D}^b(A)\rightarrow \mathrm{D}^b(\widehat{A})\]
 the Fourier-Mukai functor associated to ${\mathcal P}$. It is an equivalence (\cite{mukai}), whose quasi-inverse is
  \begin{equation}\label{mukai-0}\Phi^{\widehat A\rightarrow A}_{{\mathcal P}^\vee[g]}: \mathrm{D}^b(\widehat A)\rightarrow \mathrm{D}^b(A)
  \end{equation}
When possible we will suppress the direction of the functor from the notation, writing simply $\Phi_{\mathcal P}$.  Since ${\mathcal P}^\vee=(-1_A,1_{\widehat A})^*{\mathcal P}=(1_A,-1_{\widehat A})^*\mathcal P$ it follows that  $\Phi_{{\mathcal P}^\vee}=(-1)^*\Phi_{\mathcal P}$.
 Finally, we will denote $R^i\Phi_{\mathcal P}$ the induced $i$-th cohomology functors.

  For the reader's convenience we list some useful facts, in use throughout the paper, concerning the above Fourier-Mukai equivalence.

 \noindent - \emph{Exchange of direct and inverse image of isogenies \emph{(\cite{mukai} (3.4))}. } Let $\varphi:A\rightarrow B$ be an isogeny of abelian varieties and let $\hat\varphi: \widehat B\rightarrow \widehat A$ be the dual isogeny. Then
 \begin{equation}\label{mukai-1}\hat\varphi^*\Phi_{{\mathcal P}_A}(\F)=\Phi_{{\mathcal P}_B}\varphi_*(\F), \qquad \hat\varphi_*\Phi_{{\mathcal P}_B}(\G)=\Phi_{{\mathcal P}_A}\varphi^*(\G)
 \end{equation}

 \noindent - \emph{Exchange of derived tensor product and derived Pontryagin product \emph{(\cite{mukai} (3.7))}. }
 \begin{equation}\label{mukai-2}
 \Phi_{\mathcal P}(\F*\G)=(\Phi_{\mathcal P}\F)\otimes (\Phi_{\mathcal P}\G)\qquad  \Phi_{\mathcal P}(\F\otimes \G)=(\Phi_{\mathcal P}\F)* (\Phi_{\mathcal P}\G)[g]
 \end{equation}

     \noindent - \emph{Serre-Grothendieck duality \emph{(\cite{mukai} (3.8). See also \cite{pp2} Lemma 2.2)}. }
  As customary, for a given  projective variety $X$ (in what follows $X$ will be either $A$ or $\widehat A$) and $\F\in \mathrm{D}^b(X)$, we denote $\F^{\vee}:=\cR Hom(\F, \OO_X)\in \mathrm{D}^b(X)$. Then
  \begin{equation}\label{mukai-3}
  (\Phi_{\mathcal P}\F)^\vee=\Phi_{{\mathcal P}^\vee}(\F^\vee)[g]
  \end{equation}

\noindent - \emph{The transform of a non-degenerate line bundle \emph{(\cite{mukai} Prop. 3.11(1))}. } Given an ample line bundle on $A$, the Fourier-Mukai transform $\Phi_\mathcal P(L)$ is a locally free sheaf (concentrated in degree $0$) on $\widehat A$,  denoted by $\widehat L$, of rank equal to $h^0(L)$. Moreover
  \begin{equation}\label{mukai-4}
  \varphi_{\l}^*\widehat L\simeq H^0(L)\otimes L^{-1}=(L^{-1})^{\oplus h^0(L)}
  \end{equation}

 \noindent - \emph{The Pontryagin product with a non-degenerate line bundle \emph{(\cite{mukai} (3.10))}. }  Given a non-degenerate line bundle $N$ on $A$, we denote $\underline n=c_1(N)$. Let $\F\in \mathrm{D}^b(A)$. Then
  \begin{equation}\label{mukai-5} \F*N=N\otimes\varphi_{\underline n}^*\bigl(\Phi_{\mathcal P}((-1)^*\F)\otimes N)\bigr)
 \end{equation}

   \noindent - \emph{(Hyper)cohomology and derived tensor product \emph{(\cite {pp2} Lemma 2.1)}. } Let $\F\in \mathrm{D}^b(A)$ and $\G\in\mathrm{D}^b(\widehat A)$.
  \begin{equation}\label{exchange} H^i(A,\F\otimes \Phi^{\widehat A\rightarrow A}_{\mathcal P}(\G))=H^i(\widehat A, \Phi^{A\rightarrow \widehat A}_{\mathcal P}(\F)\otimes \G )
\end{equation}

\section{Cohomological rank functions on abelian varieties}

In this section we define a certain non-negative rational number as the rank of the cohomology of a coherent sheaf (or, more generally, of the hypercohomology of a complex of coherent sheaves)  twisted with a rational power of a polarization. This definition is already found in \cite{barja} and, somewhat implicitly, a notion like that was already in use in \cite{kollar} (proof of Thm 17.12) and  \cite{pp3} (proof of Thm 4.1). This provides rational  cohomological rank functions satisfying certain transformation formulas under Fourier-Mukai transform (Prop. \ref{inversion} below). It follows that these functions are polynomial almost everywhere and extend to continuous functions on an open neighborhood of $\mathbb Q$ in  $\mathbb R$ (Corollaries \ref{inversion-Q} and \ref{continuity}).

\begin{definition}\label{def}
 (1) Given $\F\in \mathrm{D}^b(A)$ and $i\in \mathbb Z$, define
\[h^i_{gen}(A, \F)\]
as the dimension of hypercohomology  $H^i(A,\F\otimes P_\alpha)$, for $\alpha$ general in $\widehat A$. \footnote{It is well known that hypercohomology groups as the above satisfy the usual base-change and semicontinuity properties, see e.g. \cite{pp2} proof of Lemma 3.6 and \cite{g} 7.7.4 and Remarque 7.7.12(ii).}

\noindent (2) Given $\F\in \mathrm{D}^b(A)$,  a polarization $\l$ on $A$ and $x={\frac{a} {b}}\in\mathbb Q$, $b>0$, we define
\[h^i_{\F}(x\l)=b^{-2g}\, h^i_{gen}(A,\mu_b^*(\F)\otimes L^{ab})\]
\end{definition}

The definition is dictated from the fact that the degree of $\mu_b:A\rightarrow A$ is $b^{2g}$ (see the previous section for the notation)  and  $\mu_b^*(\l)=b^2\l$.  Therefore the pullback via $\mu_b$ of the class $\frac{a}{b}\l$ is $ab\l$. It is easy to check that the definition does not depend on the representation $x=\frac{a}{b}$. For example, if $n\in\mathbb Z$, writing $n=\frac{nb}{b}$ one gets
\[b^{-2g}h^i_{gen}((\mu_b^*\F)\otimes L^{b^2n})=b^{-2g}h^i_{gen}(\mu_b^*(\F\otimes L^n))=b^{-2g}\sum_{\alpha\in \hat\mu_b^{-1}(\hat e)}h^i_{gen}(\F\otimes L^n\otimes P_\alpha)=h^i_{gen}(\F\otimes L^n)\]
where $\hat e$ is the identity point of $\widehat A$ and $\hat\mu_b:\widehat A\rightarrow \widehat A$ is the dual isogeny.

\begin{remark}\label{Q-twisted}[Coherent sheaves $\mathbb Q$-twisted by a polarization] Let $\l$ be a polarization on our abelian variety $A$. Following Lazarsfeld (\cite{laz2}), but somewhat more restrictively,  we will define \emph{ coherent sheaves $\mathbb Q$-twisted by $\l$} as equivalence classes of pairs $(\F,x\l)$ where $\F$ is a coherent sheaf on $A$ and $x\in\mathbb Q$, with respect to the equivalence relation generated by $(\F\otimes L^h,x\l)\sim (\F,(h+x)\l)$, for $L$ a line bundle representing $\l$ and $h\in\mathbb Z$. Such thing is denoted $\F\langle x\l\rangle$ (note  that\  $\F\otimes P_\alpha\langle x\l\rangle=\F\langle x\l\rangle$\  for $\alpha\in \widehat A$). Similarly, one can define complexes of coherent sheaves  $\mathbb Q$-twisted by the polarization $\l$.
 Now the quantity $h^i_\F(x\l)$ depends only on the $\mathbb Q$-twisted complex $\F\langle x\l\rangle$ and one may think of it as the (generic) cohomology rank
$h^i(A,\F\langle x\l\rangle)$.
\end{remark}

Some immediate basic properties of   generic cohomology ranks defined above are:

\noindent (a)   \  $\chi_\F(x\l)=\sum_i (-1)^ih^i_{\F}(x\l)$, \
where $\chi_\F(x\l)$ is the Hilbert polynomial, i.e. the Euler characteristic.

\noindent (b) Serre duality: $h^i_{\F}(x\l)=h^{g-i}_{\F^{\vee}}(-x\l)$.

\noindent (c) Serre vanishing: \emph{given a coherent sheaf $\F$ there is a $x_0\in \mathbb Q$ such that $h^i_\F(x\l)=0$ for all $i>0$ and for all rational $x\ge x_0$. }

\noindent \emph{Proof of} (c). It is well known that there is  $n_0\in \mathbb Z$ such that $h^i(A,\F\otimes L^{n_0}\otimes P_\alpha)=0$ for all $i>0$ and for all $\alpha\in\Pic0 X$. Following the terminology of Mukai, this condition is referred to as follows: \emph{$\F\otimes L^{n}$ satisfies IT(0)} (the Index Theorem with index 0, see also \S5 below). Therefore, for all $b\in \mathbb Z^+$, $ \mu_b^*(\F)\otimes L^{b^2n_0}$ satisfies IT(0). The tensor product of a coherent IT(0) sheaf with a locally free IT(0) sheaf is IT(0) (see e.g. \cite[Prop. 3.1]{pp3} for a stronger result). Therefore $\mu_b^*(\F)\otimes L^m$ satisfies IT(0) for all $b\in \mathbb Z^+$ and $m\ge b^2n_0$. This is more than enough to ensure that $h^i_{\F}(x\l)=0$ for all rational numbers $x\ge n_0$.\footnote{More precisely this proves, in the terminology of Section 5 below, that the $\mathbb Q$-twisted coherent sheaves $\F\langle x\l\rangle$ satisfy IT(0) for all $x\in\mathbb Q^{\ge n_0}$.} \endproof

The following Proposition describes the behavior of the generic cohomology ranks with respect to the Fourier-Mukai transform.

\begin{proposition}\label{inversion-a}  Let $\F\in \mathrm{D}^b(A)$ and let $\l$ be a polarization on $A$. Then, for $x\in\mathbb Q^+$
\[h^i_{\F}(x\l) = \frac{x^g}{\chi(\l)}h^{g-i}_{\varphi_\l^*\Phi_{\mathcal P^\vee}(\F^\vee)}(\frac{1}{x}\l)\]
and, for $x\in\mathbb Q^-$,
 \[h^i_{\F}(x\l) =
 \frac{ (-x)^g}{\chi(\l)}h^i_{\varphi_\l^*\Phi_{\mathcal P}(\F)}(-\frac{1}{x}\l)\]
\end{proposition}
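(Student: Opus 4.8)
My plan is to prove the first formula (for $x\in\mathbb Q^+$) directly and to deduce the second (for $x\in\mathbb Q^-$) from it by Serre duality. Indeed, property (b) gives $h^i_\F(x\l)=h^{g-i}_{\F^\vee}(-x\l)$; for $x\in\mathbb Q^-$ the argument $-x$ is positive, so applying the first formula to $\F^\vee$ at $-x$ and using $\Phi_{\mathcal P^\vee}=(-1)^*\Phi_{\mathcal P}$ together with the invariance of generic cohomological ranks under $(-1)^*$ (which commutes with $\varphi_\l^*$) returns exactly the second formula. Hence it suffices to treat $x=\tfrac ab\in\mathbb Q^+$ with $a,b>0$, which is convenient because $\widehat{L^{ab}}$ is then an honest vector bundle and \eqref{mukai-4} applies directly. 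Unwinding Definition \ref{def}, the task is to compare $b^{2g}h^i_\F(x\l)=h^i_{gen}(A,\mu_b^*\F\otimes L^{ab})$ with the analogous expression for $\varphi_\l^*\Phi_{\mathcal P^\vee}(\F^\vee)$ at $\tfrac1x\l$.

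The first step is a transform identity, valid for $\G\in\mathrm{D}^b(A)$ and $m\in\mathbb Z$,
\[ h^j_{gen}(A,\G\otimes L^m)=h^{j+g}_{gen}(\widehat A,\Phi_{\mathcal P}\G\otimes\widehat{L^m}), \]
which I would obtain by specializing the exchange formula \eqref{exchange} to the second argument $\widehat{L^m}=\Phi^{A\to\widehat A}_{\mathcal P}(L^m)$: the left-hand side then involves $\Phi^{\widehat A\to A}_{\mathcal P}\Phi^{A\to\widehat A}_{\mathcal P}(L^m)=(-1)^*L^m[-g]$, using the inversion \eqref{mukai-0}, and $(-1)^*L^m$ agrees with $L^m$ modulo $\Pic0 A$, which is immaterial for generic ranks. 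Taking $\G=\mu_b^*\F$ and $m=ab$, and combining with $\Phi_{\mathcal P}\mu_b^*=\mu_{b*}\Phi_{\mathcal P}$ from \eqref{mukai-1} and the projection formula, this yields
\[ b^{2g}h^i_\F(x\l)=h^{i+g}_{gen}(\widehat A,\Phi_{\mathcal P}\F\otimes\mu_b^*\widehat{L^{ab}}). \]

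The second step moves back to $A$ and inverts the polarization. I would use that pullback by an isogeny of degree $d$ multiplies generic ranks by $d$ (from $\pi_*\OO=\bigoplus_{\gamma\in\ker\hat\pi}P_\gamma$ and surjectivity of $\hat\pi$). Pulling back by $\varphi_\l$ (degree $\chi(\l)^2$) replaces $\mu_b^*\widehat{L^{ab}}$ by $\mu_b^*\varphi_\l^*\widehat{L^{ab}}$, the two pullbacks commuting, at the cost of $\chi(\l)^{-2}$; then applying $\mu_a^*$ (degree $a^{2g}$) turns this into $\varphi_\l^*\mu_{ab}^*\widehat{L^{ab}}=\varphi_{ab\l}^*\widehat{L^{ab}}$, which by \eqref{mukai-4} equals $(L^{-ab})^{\oplus (ab)^g\chi(\l)}$. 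Collecting the factors $\chi(\l)^2$, $a^{2g}$, $(ab)^g\chi(\l)$ collapses everything to
\[ b^{2g}h^i_\F(x\l)=\frac{b^g}{a^g\chi(\l)}\,h^{i+g}_{gen}(A,\mu_a^*\varphi_\l^*\Phi_{\mathcal P}\F\otimes L^{-ab}). \]
A final application of Serre duality, together with $(\Phi_{\mathcal P^\vee}\F^\vee)^\vee=\Phi_{\mathcal P}\F[g]$ (a formal consequence of \eqref{mukai-3}), rewrites the right-hand cohomology as $h^{g-i}_{gen}(A,\mu_a^*\varphi_\l^*\Phi_{\mathcal P^\vee}(\F^\vee)\otimes L^{ab})=a^{2g}h^{g-i}_{\varphi_\l^*\Phi_{\mathcal P^\vee}(\F^\vee)}(\tfrac1x\l)$; dividing by $b^{2g}$ gives the stated formula, the constant being $\tfrac{a^g}{b^g\chi(\l)}=\tfrac{x^g}{\chi(\l)}$.

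The step I expect to be most delicate is this simultaneous use of the two families of multiplication isogenies. Identity \eqref{mukai-4} only evaluates $\varphi_{ab\l}^*\widehat{L^{ab}}=\varphi_\l^*\mu_{ab}^*\widehat{L^{ab}}$, i.e.\ it pairs $\widehat{L^{ab}}$ with $\varphi_{ab\l}^*$ rather than with the $\varphi_\l^*$ demanded by the target; neither side of the asserted equality matches \eqref{mukai-4} on its own, each being off by a single multiplication isogeny, which is precisely the shadow of the $x\mapsto\tfrac1x$ symmetry. Absorbing the missing $\mu_a^*$ through the degree-$a^{2g}$ rescaling is what repairs this, but it must be carried out while checking that a general character $P_\alpha$ stays general under each of $\mu_b^*$, $\varphi_\l^*$ and $\mu_a^*$ (again via surjectivity of the dual isogenies), and that the homological shifts produced by \eqref{mukai-0} and \eqref{mukai-3} combine into the single index reversal $i\rightsquigarrow g-i$.
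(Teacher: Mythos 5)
Your argument is correct and follows essentially the same route as the paper's own proof: both transport the computation across the Fourier--Mukai transform, absorb the mismatch between $\varphi_{ab\l}^*$ and $\varphi_\l^*$ by pulling back along $\mu_a$ and $\varphi_\l$ and dividing by the degrees $a^{2g}$ and $\chi(\l)^2$, and then invoke (\ref{mukai-4}) together with $h^0(L^{ab})=(ab)^g\chi(\l)$ to produce the factor $\frac{x^g}{\chi(\l)}$ and the same intermediate quantity $h^{g-i}_{gen}(A,\mu_a^*\varphi_\l^*\Phi_{\mathcal P^\vee}(\F^\vee)\otimes L^{ab})$. The only differences are bookkeeping: the paper starts from $\mathrm{Ext}^i_A(\mu_b^*\F^\vee,L^{ab}_\alpha)$, applies adjunction and Serre duality on $\widehat A$ midway and a final $(-1)^*$, whereas you postpone Serre duality to the end on $A$ and let (\ref{mukai-3}) produce $\Phi_{\mathcal P^\vee}(\F^\vee)$ directly; and the paper, like you, observes that the $\mathbb Q^-$ case follows from the $\mathbb Q^+$ case by Serre duality.
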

\begin{proof}
Let us start with the case $x=\frac{a}{b}\in\mathbb Q^+$. Then,
\begin{eqnarray*}
h^i_{\F}(x\l)=\frac{1}{b^{2g}}h^i_{gen}(A, \mu_b^*\F\otimes L^{ab})&=&\frac{1}{b^{2g}}\dim \mathrm{Ext}^i_A(\mu_b^*\F^{\vee}, L^{ab}_{\alpha})\\&=&\frac{1}{b^{2g}}\dim \mathrm{Ext}^i_{\hat{A}}(\Phi_{\mathcal P}(\mu_b^*\F^{\vee}),\Phi_{\mathcal P}( L^{ab}_{\alpha})),
\end{eqnarray*}
where $\alpha\in\hat{A}$ is  general, $L^{ab}_{\alpha}:=L^{ab}\otimes P_{\alpha}$, and the last equality holds by Mukai's equivalence \cite{mukai}.

Note that, by  (\ref{mukai-1}), $\Phi_{\mathcal P}(\mu_b^*\F^{\vee})\simeq \hat{\mu}_{b*}\Phi_{\mathcal P}(\F^{\vee})$
 where $\hat{\mu}_{b}: \widehat{A}\rightarrow \widehat{A}$ is the multiplication by $b$ on $\widehat{A}$. By (\ref{mukai-4}) $R\Phi_{\mathcal P}( L^{ab}_{\alpha}):=\widehat{L^{ab}_{\alpha}}$ is a vector bundle on $\widehat{A}$ and
 \begin{equation}\label{to-be-inserted}\mu_{ab}^*\varphi_\l^*\widehat{L^{ab}_{\alpha}}=\varphi_{ab\l}^*\widehat{L^{ab}_{\alpha}}
 \simeq ((L^{ab}_{\alpha})^{-1})^{\oplus h^0(L^{ab})}.
 \end{equation}
 Hence, for general $\alpha\in\widehat A$,
\begin{eqnarray*}
h^i_{\F}(x\l)&=&\frac{1}{b^{2g}}\dim \mathrm{Ext}^i_{\widehat{A}}(\hat{\mu}_{b*}\Phi_{\mathcal P}(\F^{\vee}), \widehat{L^{ab}_{\alpha}})=\frac{1}{b^{2g}}\dim \mathrm{Ext}^i_{\widehat{A}}(\Phi_{\mathcal P}(\F^{\vee}), \hat{\mu}_{b}^*\widehat{L^{ab}_{\alpha}})\\
&=&\frac{1}{b^{2g}}\dim \mathrm{Ext}^{g-i}_{\widehat{A}}(\hat{\mu}_{b}^*\widehat{L^{ab}_{\alpha}},\Phi_{\mathcal P}(\F^{\vee}) )\\&=&\frac{1}{\deg \hat{\mu}_a\deg \varphi_\l}\frac{1}{b^{2g}}\dim \mathrm{Ext}^{g-i}_{A}(\varphi_
\l^*\hat{\mu}_a^*\hat{\mu}_{b}^*\widehat{L^{ab}_{\alpha}},\varphi_\l^*\hat{\mu}_a^*\Phi_{\mathcal P}(\F^{\vee}) )\\&=&\frac{1}{\chi(\l)^2}\frac{1}{a^{2g}b^{2g}}\dim \mathrm{Ext}^{g-i}_{A}(\varphi_{ab\l}^*\widehat{L^{ab}_{\alpha}},\mu_a^*\varphi_\l^*\Phi_{\mathcal P}(\F^{\vee}) )\\&\buildrel{(\ref{to-be-inserted})}\over =&\frac{1}{\chi(\l)}\frac{1}{a^{g}b^{g}}h^{g-i}(A,\mu_a^*\varphi_\l^*\Phi_{\mathcal P}(\F^{\vee}) \otimes L^{ab}_{\alpha}).
\end{eqnarray*}
We also note that $(-1)_{\widehat{A}}^*\Phi_{\mathcal P}(\F^{\vee})=\Phi_{\mathcal{P}^{\vee}}(\F^{\vee})$ and $(-1)_{{A}}^*\l=\l$. Hence, applying $(-1)_A^*$, we get
\begin{eqnarray*}h^i_{\F}(x\l)\> =\> \frac{1}{\chi(\l)}\frac{1}{a^{g}b^{g}}h^{g-i}_{gen}(\mu_a^*\varphi_\l^*\Phi_{\mathcal{P}^{\vee}}(\F^{\vee}) \otimes L^{ab})\> =\>
\frac{1}{\chi(\l)}\frac{a^g}{b^g}h^{g-i}_{\varphi_\l^*\Phi_{\mathcal{P}^{\vee}}(\F^{\vee})}(\frac{b}{a}\l)).\end{eqnarray*}
By similar argument (or by Serre duality) we get the equalities when $x\in\mathbb Q^-$.
\end{proof}

\begin{corollary}\label{inversion} Under the same hypothesis and notation of the previous Proposition,  for each $i\in\mathbb Z$ there are  $\epsilon^-,\epsilon^+ >0$ and two polynomials $P^-_{i,\F},P^+_{i,\F}\in\mathbb Q [x]$ of degree $\le \dim A$ such that, for $x\in (-\epsilon^-,0)\cap \mathbb Q$,
\[h^i_\F(x\l)\> =\> P^-_{i,\F}(x) \]
and, for $x\in (0,\epsilon^+)\cap \mathbb Q$
\[h^i_\F(x\l)\> =\> P^+_{i,\F}(x)\]
More precisely
\[h^i_{\F}(x\l) =
 \frac{(-x)^g}{\chi(\l)}\chi_{\varphi_\l^*R^i\Phi_{\mathcal P}(\F)}(-\frac{1}{x}\l) \quad\hbox{\emph{for} $x\in (-\epsilon^-,0)\cap \mathbb Q$}\]
\[h^i_{\F}(x\l)=
 \frac{x^g}{\chi(\l)}\chi_{\varphi_\l^*R^{g-i}\Phi_{\mathcal P^\vee}(\F^\vee)}(\frac{1}{x}\l)\quad\hbox{\emph{for} $x\in(0,\epsilon^+)\cap \mathbb Q$}\]
\end{corollary}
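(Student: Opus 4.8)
The plan is to derive both statements directly from the transformation formulas of Proposition \ref{inversion-a}, exploiting the fact that as $x\to 0^\pm$ the evaluation point $\pm\frac{1}{x}$ tends to $+\infty$, a regime controlled by Serre vanishing. So first I would rewrite, for $x\in\mathbb Q^+$, the identity $h^i_\F(x\l)=\frac{x^g}{\chi(\l)}h^{g-i}_{\G}(\frac{1}{x}\l)$ with $\G:=\varphi_\l^*\Phi_{\mathcal P^\vee}(\F^\vee)\in\mathrm{D}^b(A)$, set $y=\frac{1}{x}$, and study $h^{g-i}_{\G}(y\l)$ for $y\gg 0$. Since $\varphi_\l$ is an isogeny, hence flat, pullback is exact and the cohomology sheaves of $\G$ are $\mathcal H^j(\G)=\varphi_\l^*R^j\Phi_{\mathcal P^\vee}(\F^\vee)$.

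The key step is to pass from the hypercohomology of the complex $\G$ to the ordinary cohomology of a single sheaf. Invoking property (c) in its $\mathbb Q$-twisted IT(0) form (recorded in its footnote), I would choose $y_0$ so that every cohomology sheaf $\mathcal H^j(\G)\langle y\l\rangle$ satisfies IT(0) for all rational $y\ge y_0$. Then the hypercohomology spectral sequence $E_2^{p,q}=h^p_{\mathcal H^q(\G)}(y\l)\Rightarrow h^{p+q}_{\G}(y\l)$ degenerates onto the column $p=0$, so that $h^{g-i}_{\G}(y\l)=h^0_{\mathcal H^{g-i}(\G)}(y\l)=\chi_{\mathcal H^{g-i}(\G)}(y\l)$ for $y\ge y_0$, the last equality because IT(0) forces the higher generic cohomology to vanish. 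Substituting $\mathcal H^{g-i}(\G)=\varphi_\l^*R^{g-i}\Phi_{\mathcal P^\vee}(\F^\vee)$ and $y=\frac{1}{x}$ yields the stated closed form on $(0,\tfrac{1}{y_0})\cap\mathbb Q$, so one may take $\epsilon^+=\frac{1}{y_0}$. The negative side is entirely parallel: starting from $h^i_\F(x\l)=\frac{(-x)^g}{\chi(\l)}h^i_{\varphi_\l^*\Phi_{\mathcal P}(\F)}(-\frac{1}{x}\l)$, the evaluation point $-\frac{1}{x}$ again tends to $+\infty$, and the relevant surviving sheaf is $\varphi_\l^*R^i\Phi_{\mathcal P}(\F)$.

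It then remains to check that the resulting expressions are genuine polynomials in $x$ of degree $\le g$. Here $\chi_{\mathcal H}(y\l)$ is a Hilbert polynomial, of degree $\dim\mathrm{supp}(\mathcal H)\le g$ in $y$; writing it as $\sum_{k=0}^{g}c_k y^k$ and evaluating at $y=\frac{1}{x}$, the factor $\frac{x^g}{\chi(\l)}$ clears denominators, giving $\frac{1}{\chi(\l)}\sum_{k=0}^{g}c_k\,x^{g-k}$, a polynomial in $x$ of degree $\le g$; this is $P^+_{i,\F}$, and analogously $P^-_{i,\F}$ on the negative side.

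I expect the main obstacle to be the spectral-sequence degeneration for a genuine complex rather than a single sheaf: one must verify that the uniform IT(0) vanishing of \emph{all} cohomology sheaves $\mathcal H^j(\G)$ kills every entry $E_2^{p,q}$ with $p>0$, and that no higher differential can perturb the surviving $p=0$ column, so that the hypercohomology in degree $g-i$ reads off precisely the single sheaf $\mathcal H^{g-i}(\G)$. Everything else is a substitution, together with the bookkeeping converting a polynomial in $\frac{1}{x}$ into one in $x$.
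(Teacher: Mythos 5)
Your proposal is correct and follows essentially the same route as the paper: apply the transformation formulas of Proposition \ref{inversion-a}, use Serre vanishing (property (c)) to make all the cohomology sheaves $\varphi_\l^*R^j\Phi_{\mathcal P^\vee}(\F^\vee)$ satisfy IT(0) at $\frac{1}{x}\l$ for $x$ small, so that the hypercohomology spectral sequence collapses onto the single sheaf in degree $g-i$ and the generic $h^0$ becomes the Euler characteristic. The extra details you supply (flatness of $\varphi_\l$ identifying the cohomology sheaves, the degeneration argument, and the clearing of denominators by the factor $x^g$) are exactly the points the paper leaves implicit.
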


\proof The statement follows from Proposition \ref{inversion} via Serre vanishing (see (c) above in this section). Indeed for a sufficiently small $x\in\mathbb Q^+$ we have that $h^k_{\varphi_\l^*R^{j}\Phi_{\mathcal P^\vee}(\F^\vee)}(\frac 1 {x} \l))=0$ for all $k\ne 0$ and all $j$. Therefore the hypercohomology spectral sequence computing $ h^{g-i}_{\varphi_\l^*\Phi_{\mathcal P^\vee}(\F^\vee)}(\frac 1 {x} \l))$ \footnote{this means the spectral sequence computing the hypercohomogy groups $H^{g-i}(A,\mu_a^*(\varphi_\l^*\Phi_{\mathcal P^\vee}(\F^\vee))\otimes L^{ab}\otimes P_\alpha)$ for  $\frac{ b}{a}=\frac{1}{x}$ and $\alpha\in\Pic0 X$ general} collapses so that
\[ h^{g-i}_{\varphi_\l^*\Phi_{\mathcal P^\vee}(\F^\vee)}(\frac 1 {x}\l))=h^0_{\varphi_\l^*R^{g-i}\Phi_{\mathcal P^\vee}(\F^\vee)}(\frac 1 {x} \l))=\chi_{\varphi_\l^*R^{g-i}\Phi_{\mathcal P^\vee}(\F^\vee)}(\frac 1 {x}\l))\>.\]
 This proves the statement for $x>0$. The proof for the case $x<0$ is the same.
\endproof

\begin{remark}\label{eff-serre} It follows from the proof that one can take as $\epsilon^-$ the minimum, for all $i$, of $\frac{1}{x_i}$, where $x_i$ is a bound ensuring Serre vanishing for twists with powers of $L$  of the sheaf $\varphi_\l^*R^i\Phi_{\mathcal P}(\F)$. Similarly for $\epsilon^+$.
\end{remark}

The next Corollary shows that the statement of the previous Corollary holds more generally in $\mathbb Q$-twisted setting.

\begin{corollary}\label{inversion-Q} Same hypothesis and notation of the previous Proposition. Let $x_0\in\mathbb Q$. For each $i\in\mathbb Z$ there are  $\epsilon^-,\epsilon^+ >0$ and two polynomials $P^-_{i,\F,x_0},P^+_{i,\F,x_0}\in\mathbb Q [x]$ of degree $\le  \dim A$ such that, for $x\in (x_0-\epsilon^-,x_0)\cap \mathbb Q$,
\[h^i_\F(x\l)\> =\> P^-_{i,\F,x_0}(x) \]
and, for $x\in (x_0,x_0+\epsilon^+)\cap \mathbb Q$
\[h^i_\F(x\l)\> =\> P^+_{i,\F,x_0}(x)\]
\end{corollary}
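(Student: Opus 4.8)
The plan is to reduce the statement at an arbitrary rational point $x_0=\frac{a_0}{b_0}$ (with $b_0>0$) to Corollary \ref{inversion}, which is precisely the case $x_0=0$, by a change of variables that simultaneously clears the denominator of $x_0$ and translates $x_0$ to the origin. The two tools are the multiplication-by-$b_0$ isogeny, used to make the argument integral, and the $\mathbb Q$-twist relation $(\H\otimes L^h)\langle x\l\rangle=\H\langle(h+x)\l\rangle$ of Remark \ref{Q-twisted}, used to move an integral point to $0$.

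First I would record the scaling identity
\[
h^i_\F(x\l)=b^{-2g}\,h^i_{\mu_b^*\F}\big(b^2x\,\l\big)\qquad(x\in\mathbb Q,\ b\in\mathbb Z,\ b>0),
\]
which is the exact analogue, for general rational argument, of the computation carried out after Definition \ref{def} for integral argument. Its proof is the same short bookkeeping: unwinding the Definition one reduces to $h^i_{gen}(\mu_b^*\H)=b^{2g}h^i_{gen}(\H)$, which holds because $\mu_{b*}\mu_b^*\H\simeq\H\otimes\mu_{b*}\OO_A$ and $\mu_{b*}\OO_A$ is the direct sum of the $b^{2g}$ line bundles $P_\alpha$ with $\alpha\in\ker\hat\mu_b$, so that for general $\alpha$ each summand contributes $h^i_{gen}(\H)$; here one also uses that $\mu_b^*L\equiv L^{b^2}$ modulo $\Pic0 A$, which is all that matters since the groups are computed after a general translate by $P_\alpha$.

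Applying this with $b=b_0$ and then the integral $\mathbb Q$-twist shift gives, for $t$ near $0$,
\[
h^i_\F\big((x_0+t)\l\big)=b_0^{-2g}\,h^i_{\mu_{b_0}^*\F}\big((a_0b_0+b_0^2t)\l\big)=b_0^{-2g}\,h^i_{\G}\big((b_0^2t)\l\big),\qquad \G:=\mu_{b_0}^*\F\otimes L^{a_0b_0},
\]
where I used that $a_0b_0\in\mathbb Z$. Since $\G\in\mathrm D^b(A)$, Corollary \ref{inversion} supplies $\delta^-,\delta^+>0$ and polynomials $P^\pm_{i,\G}\in\mathbb Q[x]$ of degree $\le g$ with $h^i_\G(s\l)=P^-_{i,\G}(s)$ for $s\in(-\delta^-,0)\cap\mathbb Q$ and $h^i_\G(s\l)=P^+_{i,\G}(s)$ for $s\in(0,\delta^+)\cap\mathbb Q$. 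Because $b_0^2>0$, the substitution $s=b_0^2(x-x_0)$ preserves signs, so setting
\[
P^\pm_{i,\F,x_0}(x):=b_0^{-2g}\,P^\pm_{i,\G}\big(b_0^2(x-x_0)\big)
\]
and $\epsilon^\pm:=\delta^\pm/b_0^2$ yields polynomials of degree $\le g$ that agree with $h^i_\F(x\l)$ on the two one-sided neighborhoods of $x_0$, as required.

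I expect no serious obstacle: the whole content of the corollary is already in Corollary \ref{inversion}, and this is a reduction. The only points demanding care are the constant $b^{-2g}$ and the degree bookkeeping in the scaling identity, the verification that the change of variables does not interchange the left and right polynomials (guaranteed by $b_0^2>0$), and the harmless replacement $\mu_b^*L\equiv L^{b^2}$ modulo $\Pic0 A$, legitimate because everything is measured generically in $\Pic0 A$. Finally one checks independence of the choice of representative $\frac{a_0}{b_0}$ of $x_0$ and of the line bundle $L$ representing $\l$, both immediate from the corresponding independence already established for Definition \ref{def}.
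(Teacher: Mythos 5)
Your proposal is correct and follows essentially the same route as the paper, which proves the corollary precisely by the reduction formula $h^i_\F((x_0+y)\l)=b^{-2g}h^i_{\mu_b^*(\F)\otimes L^{ab}}(b^2y\l)$ for $x_0=\frac{a}{b}$, $b>0$, and then invokes Corollary \ref{inversion}. Your write-up merely makes explicit the bookkeeping (the proof of the scaling identity, the sign-preserving substitution, and the degree count) that the paper leaves to the reader.
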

\proof This follows by reducing to the previous Corollary via the formula
\[h^i_\F((x_0+y)\l)=b^{-2g}h^i_{\mu_b^*(\F)\otimes L^{ab}}(b^2y\l)\]
for $x_0=\frac{a}{b}$, $b>0$.
\endproof

As a consequence we have
\begin{corollary}\label{continuity}
 The functions $h^i_{\F,\l}:\mathbb Q\rightarrow \mathbb  Q^{\ge 0}$  extend to a continuous functions $h^i_{\F,L}: U\rightarrow \mathbb R$, where $U$ is an open subset of $\mathbb R$ containing $\mathbb Q$, satisfying the condition of Corollary \ref{inversion-Q} above, namely for each $x_0\in U$ there exist $\epsilon^-,\epsilon^+>0$ and two polynomials $P_{i,\F,x_0}^-,P_{i,\F,x_0}^+\in\mathbb Q[x]$ of degree $\le \dim A$, having the same value at $x_0$,  such that
 \[h^i_{\F}(x_\l)=\begin{cases}P^-_{i,\F,x_0}(x)&\hbox{for $x\in (x_0-\epsilon^-,x_0]$}\\
 P^+_{i,\F,x_0}(x)&\hbox{for $x\in [x_0,x_0+\epsilon^+)\>.$}
 \end{cases}\]
\end{corollary}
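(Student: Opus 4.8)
The plan is to upgrade Corollary~\ref{inversion-Q} by supplying the one fact it does not contain: that at each rational point $x_0$ the two one-sided polynomials take the \emph{common} value $h^i_\F(x_0\l)$. Granting this, continuity at $x_0$ is immediate, since a polynomial is continuous and the left/right limits of $h^i_{\F,\l}$ at $x_0$ are then $P^-_{i,\F,x_0}(x_0)=P^+_{i,\F,x_0}(x_0)=h^i_\F(x_0\l)$; the passage to a real neighbourhood is then a formal gluing. Exactly as in the proof of Corollary~\ref{inversion-Q}, the identity $h^i_\F((x_0+y)\l)=b^{-2g}h^i_{\mu_b^*(\F)\otimes L^{ab}}(b^2y\l)$ for $x_0=\frac{a}{b}$ reduces everything to $x_0=0$: writing $\mathcal H:=\mu_b^*(\F)\otimes L^{ab}$ one has $P^\pm_{i,\F,x_0}(x)=b^{-2g}P^\pm_{i,\mathcal H,0}(b^2(x-x_0))$ and $h^i_\F(x_0\l)=b^{-2g}h^i_{\mathcal H}(0\l)$, so it suffices to prove $P^\pm_{i,\mathcal H,0}(0)=h^i_{\mathcal H}(0\l)$.

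The crux is therefore to read off the constant terms $P^\pm_{i,\mathcal H,0}(0)$ from the explicit formulas of Corollary~\ref{inversion}. For any coherent sheaf $\mathcal K$ the Hilbert polynomial $\chi_{\mathcal K}(y\l)$ has degree $\le g$ in $y$ with leading coefficient $\mathrm{rank}(\mathcal K)\,\chi(\l)$; substituting $y=-\frac1x$ into $P^-_{i,\mathcal H,0}(x)=\frac{(-x)^g}{\chi(\l)}\chi_{\varphi_\l^*R^i\Phi_{\mathcal P}(\mathcal H)}(-\frac1x\l)$, every term of degree $<g$ contributes a positive power of $x$ and vanishes at $x=0$, while the leading term contributes exactly $\mathrm{rank}\big(\varphi_\l^*R^i\Phi_{\mathcal P}(\mathcal H)\big)$. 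Since $\varphi_\l$ is an isogeny this equals $\mathrm{rank}\big(R^i\Phi_{\mathcal P}(\mathcal H)\big)$, which by base change (cf. the footnote to Definition~\ref{def}) is the generic value of $\dim H^i(A,\mathcal H\otimes P_\alpha)$, i.e. $h^i_{\mathcal H}(0\l)$. The same computation applied to $P^+_{i,\mathcal H,0}(x)=\frac{x^g}{\chi(\l)}\chi_{\varphi_\l^*R^{g-i}\Phi_{\mathcal P^\vee}(\mathcal H^\vee)}(\frac1x\l)$ yields $P^+_{i,\mathcal H,0}(0)=\mathrm{rank}\big(R^{g-i}\Phi_{\mathcal P^\vee}(\mathcal H^\vee)\big)=h^{g-i}_{\mathcal H^\vee}(0\l)$, and Serre duality (property (b) above, with $(\mathcal H^\vee)^\vee=\mathcal H$) converts this into $h^i_{\mathcal H}(0\l)$. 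Thus both one-sided polynomials take the value $h^i_{\mathcal H}(0\l)$ at $0$, giving continuity at every rational point. I expect this rank/Serre-duality identification to be the only substantive step.

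It remains to glue the local data into a continuous function on an open $U\supseteq\mathbb Q$. Taking $\epsilon^\pm_{x_0}$ from Corollary~\ref{inversion-Q}, set $U:=\bigcup_{x_0\in\mathbb Q}(x_0-\epsilon^-_{x_0},x_0+\epsilon^+_{x_0})$, an open set containing $\mathbb Q$, and on each interval extend $h^i_{\F,\l}$ by the relevant polynomial $P^\pm_{i,\F,x_0}$ evaluated at real arguments. Consistency rests on a rigidity remark: whenever two prescribing polynomials are both constrained to equal $h^i_{\F,\l}$ on the rational points of a common subinterval, they agree at infinitely many points and, being of degree $\le g$, coincide identically. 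Hence the extension is well defined, is given near every irrational point of $U$ by a single polynomial (so is smooth there), and at every rational point has matching left/right values by the previous paragraph; it is therefore continuous on $U$. Note that this produces no critical points in $U$ away from $\mathbb Q$, so irrational critical points such as the one in Example~\ref{irrational} necessarily lie outside $U$; this is consistent, as $U$ need not be all of $\mathbb R$, the stronger continuity on $\mathbb R$ being the separate content of Theorem~\ref{c0}.
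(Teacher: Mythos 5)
Your argument is correct and follows essentially the same route as the paper: reduce to $x_0=0$ via the twisting formula of Corollary \ref{inversion-Q}, identify the constant terms $P^{\pm}_{i,\F,0}(0)$ with the generic ranks of $\varphi_\l^*R^i\Phi_{\mathcal P}(\F)$ and $\varphi_\l^*R^{g-i}\Phi_{\mathcal P^\vee}(\F^\vee)$ (i.e.\ the degree-$g$ Hilbert coefficients), and conclude by cohomology and base change plus Serre duality that both equal $h^i_{\F}(0)$. Your explicit leading-coefficient computation and the gluing/rigidity remark for defining $U$ merely spell out details the paper leaves implicit.
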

\proof Let $x_0\in \mathbb Q$.
The assertion follows from  Corollary \ref{inversion-Q} because, for $x_0\in\mathbb Q$, $P^-_{i,\F,x_0}(x_0)=P^+_{i,\F,x_0}(x_0)=h^i_{\F}(x_0\l)$.
 To prove this we can assume, using Corollary \ref{inversion-Q}, that $x_0=0$. By Corollary \ref{inversion} we have that $P^-_{i,\F,x_0}(0)$ (resp. $P^+_{i,\F,x_0}(0)$) coincide, up the the same multiplicative constant, with the coefficients of degree $g$ of the Hilbert polynomial of the sheaves $\varphi_\l^*R^i\Phi_{\mathcal P}(\F)$, resp. $\varphi_\l^*R^{g-i}\Phi_{\mathcal P^\vee}(\F^\vee)$. Hence they coincide, up to the same multiplicative constant, with the generic ranks of the above sheaves. By cohomology and base change, and Serre duality,  such generic ranks coincide with  $h^i_{\F,\l}(0)$.
 \endproof

\begin{remark}\label{acc} It seems likely that $U=\mathbb R$, hence the cohomological rank functions would be piecewise-polynomial (compare \cite[Question 8.11]{bps}). This would follow from the absence of accumulation points in $\mathbb R\smallsetminus U$, but at present we don't know how to prove that. In any case, in the next section we prove that the cohomological rank functions extend to continuous function on the whole $\mathbb R$.
\end{remark}

 Given two objects $\F$ and $\G$ in $\mathrm{D}^b(A)$ and $f\in\text{Hom}_{\mathrm{D}^b(A)}(\F,\G)$ one can   define similarly the $i$-th cohomological rank, nullity and corank of the maps
 twisted with a rational multiple of a polarization $\l$ as the generic rank, nullity and corank of the maps
 \[H^i(A, \mu_b^*(\F)\otimes L^{ab}\otimes P_\alpha)\rightarrow H^i(A,\mu_b^*(\G)\otimes L^{ab}\otimes P_\alpha).\]
This gives rise to functions \  $\mathbb Q\rightarrow \mathbb Q^\ge 0$ \  satisfying the same properties.
Let us consider, for example, the rank,  (the kernel and the corank have completely similar description) and let us denote it $rk\bigl(h^i_f(x\l)\bigr)$.
 \begin{proposition}\label{functions}
 Let $x_0\in\mathbb Q$. For each $i\in\mathbb Z$ there are  $\epsilon^-,\epsilon^+ >0$ and two polynomials $P^-_{i,f,x_0},P^+_{i,f,x_0}\in\mathbb Q [x]$ of degree $\le \dim A$ such that, for $x\in (x_0-\epsilon^-,x_0)\cap \mathbb Q$,
\[rk\bigl(h^i_f(x\l)\bigr)\> =\> P^-_{i,f, x_0}(x) \]
and, for $x\in (x_0,x_0+\epsilon^+)\cap \mathbb Q$
\[rk\bigl(h^i_f(x\l)\bigr)\> =\> P^+_{i,f,x_0}(x).\]
 \end{proposition}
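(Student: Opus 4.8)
The plan is to run the proofs of Corollaries~\ref{inversion} and \ref{inversion-Q} again, carrying along the morphism $f$ and replacing, at the very end, the Hilbert polynomial of a Fourier--Mukai cohomology sheaf by the Hilbert polynomial of the \emph{image sheaf} of an induced morphism of sheaves. First I would reduce to $x_0=0$ exactly as in Corollary~\ref{inversion-Q}: writing $x_0=\frac ab$ with $b>0$, the identity used there carries $f$ to the morphism $\mu_b^*(f)\otimes L^{ab}$, whence
\[rk\bigl(h^i_f((x_0+y)\l)\bigr)=b^{-2g}\,rk\bigl(h^i_{\mu_b^*(f)\otimes L^{ab}}(b^2y\l)\bigr),\]
and it suffices to treat $y$ in a small interval around $0$.

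For $x_0=0$ and $x=\frac ab\in\mathbb Q^+$ small I would follow the chain of identifications in the proof of Proposition~\ref{inversion-a}. Every step there (Mukai's equivalence, the formulas (\ref{mukai-1}) and (\ref{mukai-4}), pullback along the isogenies $\hat\mu_a$ and $\varphi_\l$, Serre--Grothendieck duality and $(-1)^*$) is an isomorphism, hence functorial; applying the whole chain to $f$ identifies, up to the same normalizing constant and degree factors accounted for there, the map $h^i_f(x\l)$ with the generic rank of the map
\[H^{g-i}\bigl(\mu_a^*\varphi_\l^*\Phi_{\mathcal P^\vee}(\G^\vee)\otimes L^{ab}_\alpha\bigr)\longrightarrow H^{g-i}\bigl(\mu_a^*\varphi_\l^*\Phi_{\mathcal P^\vee}(\F^\vee)\otimes L^{ab}_\alpha\bigr)\]
induced by $\varphi_\l^*\Phi_{\mathcal P^\vee}(f^\vee)$ (the arrow is reversed because $f$ dualizes to $f^\vee\colon\G^\vee\to\F^\vee$, consistently with the contravariance of $\Ext^i(-,L^{ab}_\alpha)$; a Serre-duality swap along the way may reverse it once more, but this is harmless since a linear map and its transpose have the same rank). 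Now for $x$ small, i.e. $\frac1x$ large, Serre vanishing (property (c) above) makes the hypercohomology spectral sequences of both complexes $\varphi_\l^*\Phi_{\mathcal P^\vee}(\F^\vee)$ and $\varphi_\l^*\Phi_{\mathcal P^\vee}(\G^\vee)$ collapse onto the single row $H^0$, exactly as in Corollary~\ref{inversion}. By naturality of these spectral sequences in the complex, the displayed map is thereby identified with the map on global sections induced by the sheaf morphism $\psi:=\varphi_\l^*R^{g-i}\Phi_{\mathcal P^\vee}(f^\vee)$.

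It remains to compute the generic rank of $H^0(\psi\otimes L^{ab}\otimes P_\alpha)$ as a function of the twist. Let $\mathcal I:=\mathrm{im}\,\psi$. For $m\gg 0$ the coherent sheaves $\mathcal I$, $\ker\psi$, $\mathrm{coker}\,\psi$ and the source and target of $\psi$ all satisfy IT(0) after tensoring by $L^m$, so $H^0(-\otimes L^m\otimes P_\alpha)$ is exact on the two short exact sequences cutting out $\mathcal I$; hence for general $\alpha$ the rank of the induced map equals $h^0(\mathcal I\otimes L^m\otimes P_\alpha)=\chi_{\mathcal I}(m\l)$, a polynomial in $m$ of degree $\le g$. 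Unwinding the normalization gives
\[rk\bigl(h^i_f(x\l)\bigr)=\frac{x^g}{\chi(\l)}\,\chi_{\mathcal I}\bigl(\tfrac1x\l\bigr)\qquad\text{for }x\in(0,\epsilon^+)\cap\mathbb Q,\]
which is the desired $P^+_{i,f,0}$. The case $x<0$ is identical, using the first formula of Proposition~\ref{inversion-a}: there no dualization occurs, the arrow is preserved, and the relevant image sheaf is that of $\varphi_\l^*R^{i}\Phi_{\mathcal P}(f)$.

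I expect the main obstacle to be the bookkeeping in the second step: checking that $f$ is transported with the correct variance through every isomorphism in the proof of Proposition~\ref{inversion-a}, and---more delicately---that the collapse of the two spectral sequences is natural enough in $f$ that one may read the rank off the top cohomology sheaves rather than only off the abutment. Once this naturality is granted, the single new idea, namely ``Hilbert polynomial of a sheaf $\rightsquigarrow$ Hilbert polynomial of the image of a map of sheaves'', does all the work, and the polynomiality together with the degree bound $\le g$ follow formally as in Corollary~\ref{inversion}.
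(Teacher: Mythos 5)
Your proposal is correct and follows essentially the same route as the paper: reduce to $x_0=0$, transport $f$ functorially through the chain of identifications in the proof of Proposition \ref{inversion-a}, use Serre vanishing to identify the generic rank with the rank of the induced map on $H^0$ of the relevant Fourier--Mukai cohomology sheaves, and then observe that for large twists this rank equals the Hilbert polynomial of the image sheaf (the paper phrases this as ``the image of the map $F_{-1/x}$ is $H^0$ of the image of the map of coherent sheaves''). Your extra care about the variance of $f$ under dualization, and the remark that a transpose does not change the rank, is a point the paper leaves implicit but does not change the argument.
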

\begin{proof} As above, we can assume that $x_0=0$. By Corollary \ref{inversion} and its proof there is a $\epsilon^->0$ such that for $x=\frac{a}{b}\in (-\epsilon^-,0)$  (with $a<0$ and $b>0$), $rk\bigl(h^i_f(x\l)\bigr)$ coincides with
$\frac{(-x)^g}{\chi(\l )} rk (F_{-\frac{1}{x}})$
 where $F_{-\frac{1}{x}}$ is the natural map
\[ F_{-\frac{1}{x}}: H^0\bigl(\mu_{-a}^*(\varphi_\l^*R^i\Phi_{\mathcal P}\F)\otimes L^{-ab}\bigr)\rightarrow H^0\bigl(\mu_{-a}^*(\varphi_\l^*R^i\Phi_{\mathcal P}\G)\otimes L^{-ab}\bigr)  \]
By an easy calculation with Serre vanishing (see (c) in this Section), up to taking a smaller $\epsilon^-$ the image of the map $F_{-\frac{1}{x}}$ is $H^0$ of the image of the map coherent sheaves
\[\mu_{-a}^*(\varphi_\l^*R^i\Phi_{\mathcal P}\F)\otimes L^{-ab} \rightarrow \mu_{-a}^*(\varphi_\l^*R^i\Phi_{\mathcal P}\G)\otimes L^{-ab}\]
and its dimension is
\[\chi_{Im(\varphi_\l^*R^i\Phi_{\mathcal P}(f))}(-\frac{1}{x}\l)\]
In conclusion, for $x\in (-\epsilon^-,0)$
\[rk\bigl(h^i_f(x\l)\bigr)=\frac{(-x)^g}{\chi(\l)}\chi_{Im(\varphi_\l^*R^i\Phi_{\mathcal P}(f))}(-\frac{1}{x}\l):=P^-_{i,f,0}(x)\]
Similarly, for $x\in (0,\epsilon^+)$
\[rk\bigl(h^i_f(x\l)\bigr)=\frac{(x)^g}{\chi(\l)}\chi_{Im(\varphi_\l^*R^{g-i}\Phi_{\mathcal P^\vee}(f))}(\frac{1}{x}\l):=P^+_{i,f,0}(x).\]
\end{proof}

\section{Continuity as real functions}

The aim of this section is to prove Theorem \ref{c0} below, asserting that the cohomological rank functions extend to continuous functions on the whole $\mathbb R$  (see Remark \ref{acc}).  We
 start with a version of Serre's vanishing needed in the proof.

 \begin{lemma}\label{effectivebound} Let $A$ be an abelian variety and let $L$ be a very ample line bundle on $A$. Let $\F$ be a  coherent sheaf of dimension $n$ on $A$. There exist two integers $M^-$ and $M^+$ such that for all integers $m\in \mathbb Z^+$, for all $k=0,\dots n$ and  all sufficiently general complete intersections $Z_k=D_1\cap D_2\cap\cdots \cap D_k$ of $k$ divisors  $D_i\in |m^2\rho_iL|$ with $0<\rho_i<1$ rational with $m^2\rho_i\in\mathbb Z$ \emph{(here we understand $Z_0=A$)}, and for all $\alpha\in\widehat A$, the following conditions hold:
\[\begin{cases}h^i(\mu_m^*\F\mid_{Z_k}\otimes L^{m^2t}\otimes P_\alpha)=0&\hbox{for all \ \ $i\geq 1$ and $t\in \mathbb Z^{\ge M^+}$}\\
h^i(\mu_m^*\F\mid_{Z_k}\otimes L^{m^2t}\otimes P_\alpha)=\chi(\mathcal{E}xt^{g-i}(\mu_m^*\F\mid_{Z_k}, \mathcal O_A)\otimes L^{-m^2t})&\hbox{ for all  $i\leq n-1-s$ and $t\in \mathbb Z^{\leq M^-}$}
\end{cases}\]
 The pair $(M^-,M^+)$ will be referred to as \emph{an effective cohomological bound for $\F$}.
 \end{lemma}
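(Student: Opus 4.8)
The plan is to deduce both displayed formulas from a single \emph{uniform} vanishing statement whose only real content is independence of the integer $m$.

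\textbf{Key vanishing.} For a fixed coherent sheaf $\mathcal G$ on $A$ let $u_0=u_0(\mathcal G)$ be an integer such that $\mathcal G\otimes L^q$ satisfies IT(0) for every $q\ge u_0$ (such a $u_0$ exists and is independent of the $\Pic0 A$-twist, exactly as in the proof of (c)). I claim that $h^p(A,\mu_m^*\mathcal G\otimes L^N\otimes P_\beta)=0$ for all $p\ge 1$, all $m\ge 1$, all $\beta\in\widehat A$ and all integers $N\ge u_0 m^2$. To prove this I would write $N=m^2q+r$ with $0\le r<m^2$; since $\mu_m^*L\equiv L^{m^2}$ modulo $\Pic0 A$ (the residual twist being absorbed into $\beta$), the projection formula for the finite flat map $\mu_m$ gives $h^p(\mu_m^*\mathcal G\otimes L^N\otimes P_\beta)=h^p(\mathcal G\otimes L^q\otimes\mu_{m*}(L^r\otimes P_\beta))$. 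For $r\ge 1$ the sheaf $\mu_{m*}(L^r\otimes P_\beta)$ is locally free and satisfies IT(0), because $h^p(\mu_{m*}(L^r\otimes P_\beta)\otimes P_\gamma)=h^p(L^r\otimes P_{\beta+m\gamma})=0$ for $p\ge1$ ($L^r$ being ample); for $q\ge u_0$ the sheaf $\mathcal G\otimes L^q$ satisfies IT(0); and the tensor product of an IT(0) sheaf with a locally free IT(0) sheaf is again IT(0) (the fact recalled in the proof of (c)). The case $r=0$ is the even simpler identity $h^p=\sum_{\eta\in\widehat A[m]}h^p(\mathcal G\otimes L^q\otimes P_{\beta+\eta})$. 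The crucial point is that $u_0$ depends only on $\mathcal G$ and $L$, not on $m$.

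\textbf{First formula.} For the vanishing assertion I would resolve $\OO_{Z_k}$ by the Koszul complex of the regular sequence cutting out the general complete intersection $Z_k=D_1\cap\cdots\cap D_k$, tensor by $\mu_m^*\F\otimes L^{m^2t}\otimes P_\alpha$, and run the hypercohomology spectral sequence converging to $H^\bullet(A,\mu_m^*\F|_{Z_k}\otimes L^{m^2t}\otimes P_\alpha)$. Its $E_1$-terms are sums over subsets $S\subseteq\{1,\dots,k\}$ of groups $H^q(A,\mu_m^*\F\otimes L^{m^2(t-\rho_S)}\otimes P_{\alpha'})$ with $\rho_S=\sum_{j\in S}\rho_j$; those contributing to total degree $i$ satisfy $q=i+|S|\ge 1$. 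Since $0\le\rho_S<k\le n$, the choice $t\ge u_0(\F)+n$ forces $t-\rho_S\ge u_0(\F)$, i.e. $m^2(t-\rho_S)\ge u_0(\F)m^2$, so the Key vanishing (with $\mathcal G=\F$) kills every contributing term. Hence $M^+:=u_0(\F)+n$ works, uniformly in $m$, $\alpha$ and the general $Z_k$.

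\textbf{Second formula.} Here I would dualize. Serre duality on $A$ (where $\omega_A=\OO_A$) gives $h^i(\mu_m^*\F|_{Z_k}\otimes L^{m^2t}\otimes P_\alpha)=\dim\Ext^{g-i}_A(\mu_m^*\F|_{Z_k},L^{-m^2t}\otimes P_{-\alpha})$, and the local-to-global spectral sequence $H^p(A,\EExt^q(\mu_m^*\F|_{Z_k},\OO_A)\otimes L^{-m^2t}\otimes P_{-\alpha})\Rightarrow\Ext^{p+q}$ degenerates onto its row $p=0$ as soon as the ample twist $L^{-m^2t}$ kills the higher cohomology of every sheaf $\EExt^q(\mu_m^*\F|_{Z_k},\OO_A)$; this yields precisely $h^i=\chi(\EExt^{g-i}(\mu_m^*\F|_{Z_k},\OO_A)\otimes L^{-m^2t})$. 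The needed uniform higher-cohomology vanishing is once more an instance of the Key vanishing: $\mu_m$ being flat, $\EExt^q(\mu_m^*\F,\OO_A)=\mu_m^*\EExt^q(\F,\OO_A)$, and Grothendieck duality for the lci embedding $Z_k\hookrightarrow A$, combined with the same Koszul/iterated-restriction comparison, expresses $\EExt^q(\mu_m^*\F|_{Z_k},\OO_A)$ through restrictions to $Z_k$ of the finitely many sheaves $\mu_m^*\EExt^{q'}(\F,\OO_A)$ twisted by powers $L^{m^2(\cdots)}$ and elements of $\Pic0 A$. Applying the Key vanishing to each of the finitely many $\EExt^{q'}(\F,\OO_A)$ produces one threshold $u_1$, and $M^-:=-(u_1+n)$ then makes $L^{-m^2t}$ large enough for every relevant Ext sheaf whenever $t\le M^-$. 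The upper bound on $i$ reflects the support dimension $\dim(\mu_m^*\F|_{Z_k})=n-k$, which controls the degrees $q$ in which $\EExt^q(\mu_m^*\F|_{Z_k},\OO_A)$ is nonzero and hence the range in which the degeneration pins $\Ext^{g-i}$ to the single term $H^0(\EExt^{g-i})$.

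\textbf{Main obstacle.} The whole difficulty is the uniformity in $m$: naive Serre vanishing applied to the moving sheaves $\mu_m^*\F$ would give thresholds depending on $m$, and the Euclidean-division-plus-pushforward argument of the Key vanishing is exactly what converts these into IT(0)-thresholds of the fixed sheaves $\F$ and $\EExt^{q'}(\F,\OO_A)$. A secondary point, handled by ``sufficiently general'', is that all bounds must also be uniform over the family of complete intersections $Z_k$; this follows from semicontinuity and generic flatness over the parameter space of the $D_i$, which simultaneously guarantees that the $D_i$ form a regular sequence on $\mu_m^*\F$ so that the Koszul complex is a resolution and the restriction sequences used for the second formula behave as expected. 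The bookkeeping of the $\Pic0 A$-twists arising from $\OO_A(D_i)\in L^{m^2\rho_i}\otimes\Pic0 A$ and from the possible non-symmetry of $L$ is harmless, since everything is proved for all twists at once.
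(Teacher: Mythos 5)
Your proposal is correct and follows essentially the same route as the paper: your ``Key vanishing'' is the uniform-in-$m$ Serre vanishing already established in the paper's Section 2 (item (c)), the positive direction is handled by the same Koszul-resolution/spectral-sequence argument, and the negative direction by Serre duality plus the local-to-global $\mathcal{E}xt$ spectral sequence, reducing the $\mathcal{E}xt$ sheaves of $\mu_m^*\F\mid_{Z_k}$ to restrictions of the finitely many sheaves $\mu_m^*\mathcal{E}xt^{q}(\F,\OO_A)$ (the paper does this via the exact sequence of Huybrechts--Lehn, Lemma 1.1.13, rather than Grothendieck duality for the lci embedding, but the content is the same). The only cosmetic difference is your proof of the uniform vanishing by Euclidean division and the projection formula, where the paper instead tensors the IT(0) pullback $\mu_m^*(\F\otimes L^{n_0})$ with an ample, hence locally free IT(0), line bundle.
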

 \begin{proof} Note that the statement makes sense since $L$ is assumed to be very ample and $\tau_i:=m^2\rho_i\in \mathbb Z^+$.
 Since $Z_k$ is a general complete intersection, the Koszul resolution of $\OO_{Z_k}$, tensored with $\mu_m^*\F$
 \begin{equation}\label{koszul}0\rightarrow \mu_m^*F\otimes L^{-\sum_i\tau_i} \rightarrow \cdots\rightarrow \mu_m^*\F\otimes(\oplus_i L^{-\tau_i})\rightarrow \mu_m^*\F\rightarrow \mu_m^*F\mid_{Z_k}\rightarrow 0
 \end{equation}
 is exact. Therefore the bound of the upper line is a variant of Serre vanishing, in the version of the previous section, via a standard diagram-chase.

 Concerning the lower line, we first prove it for $k=0$. By Serre duality
\begin{eqnarray*}H^i(\mu_m^*\F\otimes L^{m^2t}\otimes P_\alpha)=H^{g-i}((\mu_m^*\F)^\vee\otimes L^{-m^2t}\otimes P_\alpha^{\vee} )
= H^{g-i}(\mu_m^*(\F^\vee)\otimes L^{-m^2t}\otimes P_\alpha^{\vee} ).\end{eqnarray*}

Since $\F$ is a  coherent sheaf of dimension $n$, $\mathcal{E}xt^j(F, \mathcal{O}_A)$ vanishes for $j<g-n$ while it has
 codimension $\geq j$ with support  contained in the support of $\F$ for $j\ge g-n$ (see for instance \cite[Proposition 1.1.6]{hl}).  We apply Serre vanishing to find an integer $N$ such that
  such that
  \[H^j(\mu_m^*\mathcal{E}xt^{i}(\F, \mathcal{O}_A)\otimes L^{-m^2t}\otimes P_\alpha^{\vee})=0\quad\hbox{  for all $t\in \mathbb Z$ such that $-t\ge N$ and $j\geq 1$.}\]
   The statement of the bottom line for $k=0$ follows via the  spectral sequence
   \[H^{h}(\mu_m^*{\mathcal Ext}^{g-i-h}(\F,\OO_A)\otimes L^{-m^2t}\otimes P_\alpha^\vee)\Rightarrow H^{g-i}((\mu_m^*\F)^\vee\otimes L^{-m^2t}\otimes P_\alpha^{\vee} ).\]
   At this point the statement of the lower line for all $k\le g-n$ follows as above from the case $k=0$  and the fact  that for a general choice of a very ample divisor $D$  we have a short exact sequence for all $j\geq 0$ $$0\rightarrow\mu_m^* \mathcal Ext^j(\F, \mathcal O_A)\rightarrow\mu_m^* \mathcal Ext^j(\F, \mathcal O_A)\otimes \mathcal{O}_A(D)\rightarrow \mathcal Ext^{j+1}(\mu_m^*\F\mid_D, \mathcal{O}_A)\rightarrow 0.$$
   (see \cite[Lemma 1.1.13]{hl}).
 \end{proof}

\begin{theorem}\label{c0} Let $A$ be an abelian variety, let $\l$ be a polarization on $A$ and  $\F\in \mathrm{D}^b(A)$. The functions $x\mapsto h_{\mathcal{F}}^i(x\l)$ extend to  continuous functions on $\mathbb R$.
Such functions are bounded above by a polynomial function of degree at most $n=\dim \F$, whose coefficients involve only the intersection numbers of the support of $\F$ with powers of $L$, the ranks of the cohomology sheaves of $\F$ on the generic points of their support, and an effective bound $(N^-, N^+)$ of generic cohomology of $\F$.
\end{theorem}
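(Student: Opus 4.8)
The plan is to derive the continuous extension to all of $\mathbb R$ from the quantitative upper bound, both being consequences of the effective cohomological bounds of Lemma \ref{effectivebound}. By Corollary \ref{continuity} the function $x\mapsto h^i_{\F}(x\l)$ is already continuous on an open set $U\supseteq\mathbb Q$, where it is locally polynomial; hence it suffices to prove that it is uniformly continuous on $\mathbb Q\cap[c,d]$ for every bounded interval $[c,d]$, since a uniformly continuous function on a dense subset extends uniquely and continuously to the closure, and the extension automatically agrees with the one already defined on $U$. At several points I reduce to the case in which $\F$ is a single coherent sheaf of dimension $n$: for the upper bound this is immediate from the hypercohomology spectral sequence, which gives $h^i_{\F}(x\l)\le\sum_{p+q=i}h^p_{\mathcal H^q(\F)}(x\l)$, and for the continuity estimate it suffices to apply Lemma \ref{effectivebound} to each cohomology sheaf of $\F$ in the exact sequences below.

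First I would establish the polynomial upper bound by induction on $n=\dim\F$, the case $n=0$ being trivial. Writing $x=a/m$ so that $h^i_{\F}(x\l)=m^{-2g}h^i_{gen}(\mu_m^*\F\otimes L^{am})$, the basic device is the short exact sequence attached to a general divisor $D\in|m^2\rho L|$ with $\rho\in(0,1)$ rational and $m^2\rho\in\mathbb Z$,
\[0\to\mu_m^*\F\otimes L^{am-m^2\rho}\to\mu_m^*\F\otimes L^{am}\to(\mu_m^*\F|_D)\otimes L^{am}\to 0.\]
Taking $\rho=\{x\}$ identifies the left-hand twist with the integer twist $L^{m^2\lfloor x\rfloor}$, so the associated long exact sequence bounds $h^i_{\F}(x\l)$ by $h^i_{\F}(\lfloor x\rfloor\l)$ plus the normalized cohomology of a sheaf supported on $D$, which has dimension $\le n-1$. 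The integer term is controlled directly by the two regimes of Lemma \ref{effectivebound}: for $\lfloor x\rfloor\ge N^+$ all higher cohomology vanishes and $h^0$ equals the Hilbert polynomial, for $\lfloor x\rfloor\le N^-$ the low cohomology equals an Euler characteristic of $\EExt$ sheaves, and only the finitely many integers in $[N^-,N^+]$ remain, giving a bounded contribution. The restriction term is handled by the inductive hypothesis. Since the surviving contributions are Hilbert polynomials and finite differences of them, the bound has degree $\le n$ and its coefficients are exactly the intersection numbers of $\mathrm{supp}\,\F$ with powers of $L$, the generic ranks of the cohomology sheaves of $\F$, and the window determined by $(N^-,N^+)$.

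For the uniform continuity estimate I would, for $x=a/m<x'=a'/m$ in $[c,d]$ with $\delta=x'-x$, peel off a general divisor $D\in|m^2\delta L|$ and use
\[0\to\mu_m^*\F\otimes L^{am}\to\mu_m^*\F\otimes L^{a'm}\to(\mu_m^*\F|_D)\otimes L^{a'm}\to 0,\]
whose long exact sequence gives
\[|h^i_{\F}(x\l)-h^i_{\F}(x'\l)|\le m^{-2g}\bigl(h^i((\mu_m^*\F|_D)\otimes L^{a'm})+h^{i-1}((\mu_m^*\F|_D)\otimes L^{a'm})\bigr).\]
The sheaf $\mu_m^*\F|_D$ has dimension $\le n-1$, and its support is a slice of $\mu_m^{-1}(\mathrm{supp}\,\F)$ by a divisor of class $m^2\delta\l$, so its intersection numbers with powers of $L$ are of size $O(m^{2g}\delta)$; indeed the normalized Euler characteristic of the restriction is precisely $\chi_{\F}(x'\l)-\chi_{\F}(x\l)=O(\delta)$. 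Feeding this into the upper bound of the previous step, applied in dimension $n-1$ with these normalized data, yields $|h^i_{\F}(x\l)-h^i_{\F}(x'\l)|\le\delta\cdot Q(x')$ for a polynomial $Q$ of degree $\le n-1$ whose coefficients are independent of the denominator $m$. This is a modulus of continuity uniform on $[c,d]$, so $h^i_{\F}(\cdot\,\l)$ is uniformly continuous on $\mathbb Q\cap[c,d]$ and extends continuously to $\mathbb R$.

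The main obstacle is precisely the uniformity in the denominator $m$ and the bookkeeping of fractional twists: a priori each evaluation $h^i_{\F}(x\l)$ lives on a different isogenous copy $\mu_m^*\F$, and the vanishing window, the reduction steps, and the coefficients of the comparison polynomials must all be controlled independently of $m$. This is exactly what Lemma \ref{effectivebound} is engineered to supply — bounds $(N^-,N^+)$ valid for all $m$ simultaneously, together with the freedom to cut by complete intersections of arbitrary fractional classes $\rho_i\in(0,1)$ in order to absorb the fractional part of the twist. Keeping the geometric data (the intersection numbers of the support and the generic ranks) correctly normalized under $\mu_m^*$ throughout the induction is the part demanding the most care.
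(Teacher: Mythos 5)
Your proposal is correct and follows essentially the same route as the paper: the same device of cutting by a general divisor $D\in|m^2\delta L|$, the same induction on $\dim\F$ with the restricted sheaf carrying mass $O(\delta)$, and the same reliance on Lemma \ref{effectivebound} to make all bounds uniform in the denominator $m$. The only differences are in packaging — you phrase the conclusion as uniform Lipschitz continuity on $\mathbb Q\cap[c,d]$ and reduce to coherent sheaves via the hypercohomology spectral sequence up front, whereas the paper bounds one-sided derivatives, integrates from the vanishing endpoints $M^{\pm}$, and passes from pure sheaves through the torsion filtration to complexes — but these are equivalent reformulations of the same argument.
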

\begin{proof} We can assume that $\l$ is very ample.  The proof will be in some steps. To begin with, we prove the statement under the assumption that $\F$ is a pure sheaf.
  Let $V_1,\ldots, V_s$ be the irreducible components of the support of $\F$ with reduced scheme structures. Hence each $V_j$ is an integral variety. Let $t_j$ be the length of $\F$ at the generic point of $V_j$ and define
 $$u(\F):=\sum_jt_j(V_j\cdot L^n)_A.$$  We have seen in the previous lemma that $h_{\F}^i(x\l)$ are natural polynomial functions for $x\leq M^-$ and $x\geq M^+$. We now deal with the case when $M^-\leq x\leq M^+$. More precisely we will prove, by induction on $n=\dim \F$, the following statements
 \begin{itemize}
 \item[(a)] $h^i_{\F, \l}$ extends to  a continuous function on $\mathbb R$;
 \item[(b)] $h^0_{\F, \l}(x)\leq \frac{u(\F)}{n!}(x-M^-)^n$, for $x\geq M^-$; \\$h^i_{\F, \l}(x)\leq 2^{n-1}u(\F)(M^+-M^-)^{n-1}$, for $M^-\leq x\leq M^+$,\\ and $h^n_{\F, \l}(x)\leq \frac{u(\F)}{n!}(M^+-x)^n$, for $x\leq M^+$.
 \end{itemize}
 These assertions are clear if $\dim\F=0$.
  Assume that they hold for all pure sheaves of dimension $\leq n-1$. We will prove that they imply the following assertions:

\noindent\emph{ For all pure sheaves $\F$ with $\dim\F=n$ and for all rational numbers $x$ and $0<\epsilon<1$}
 \begin{eqnarray}\label{derivativebound}
\label{1}&& h_{\F}^0((x+\epsilon)\l)-h_{\F}^0(x\l)\leq \epsilon\frac{u(\F)}{(n-1)!}(x+\epsilon-M^-)^{n-1}\; \mathrm{for}\; x\geq M^-;\\
\label{2}&&| h_{\F}^i((x+\epsilon)\l)-h_{\F}^i(x\l)|\leq \epsilon 2^{n-1}u(\F)(M^+-M^-)^{n-1}\; \mathrm{for}\; M^-\leq x<x+\epsilon\leq M^+;\\
\label{3}&& h_{\F}^{n}(x\l )-h_{\F}^{n}((x+\epsilon)\l)\leq \epsilon\frac{u(\F)}{(n-1)!}(M^+-x-\epsilon)^{n-1}\; \mathrm{for}\;  x+\epsilon\leq M^+.
 \end{eqnarray}

   Take $M$ sufficiently large and divisible such that $Mx$ and $M\epsilon$ are integers, take a general divisor $D\in |M^2\epsilon L|$ and consider the short exact sequence:
$$0\rightarrow \mu_M^*\mathcal F\otimes L^{M^2x} \xrightarrow{\cdot D} \mu_M^*\mathcal F\otimes L^{M^2(x+\epsilon)}\rightarrow  \mu_M^*\mathcal F\otimes L^{M^2(x+\epsilon)}\mid_D\rightarrow 0.$$
Taking the long exact sequence of cohomology of the above sequence tensored with a general $P_\alpha\in \widehat A$, we see that
\begin{equation}
h_{\F}^0((x+\epsilon)\l)-h_{\F}^0(x\l)\leq \frac{1}{M^{2g}}h^{0}_{gen}( \mu_M^*\mathcal F\otimes L^{M^2(x+\epsilon)}\mid_D)
=\frac{1}{M^{2g}}h^0_{\mu_M^*\mathcal{F}\mid_D}(M^2(x+\epsilon)\l) .
 \end{equation}
Note that $\mu_M^*\F\mid_D$ is a pure sheaf on $A$ of dimension $n-1$. It is also easy to see that an effective  cohomological bound of $\mu_M^*\F\mid_D$ is $(M^2M^-, M^2M^+)$. Hence condition $\text{(b)}_{n-1}$ above yields that
\[h_{\mu_M^*\F\mid_D}^0(M^2(x+\epsilon)\l)\leq \frac{u(\mu_M^*\mathcal{F}\mid_D)}{(n-1)!}M^{2n-2}(x+\epsilon-M^-)^{n-1}.\]
The components of the support of $\mu_M^*\F\mid_D$ are $\mu^{-1}V_1\cap D,\ldots, \mu^{-1}V_s\cap D$. Hence
\[u(\mu_M^*\mathcal{F}\mid_D)=\sum_jt_j(\mu_M^{-1}V_j\cdot D\cdot L^{n-1})_A=\frac{\epsilon}{M^{2n-2}}\sum_jt_j(\mu_M^{*}V_j\cdot \mu_M^*L^{n})_A\\= \epsilon M^{2g-2n+2}r(\F).\]
It follows that
\[h_{\F}^0((x+\epsilon)\l)-h_{\F}^0(x\l)\leq \epsilon\frac{u(\F)}{(n-1)!}(x+\epsilon-M_0)^{n-1}\]
i.e. $(\ref{1})_n$.
The estimate $(\ref{3})_n$ is proved exactly in the same way as the $h_{\F, L}^0$ case. Concerning $(\ref{2})_n$  note that for $M^-\leq x<x+\epsilon\leq M^+$,
\begin{eqnarray*} &&|h_\F^i((x+\epsilon)\l)-h_{\F}^i(x\l)|\\ &\leq &\frac{1}{M^{2g}} \big(h_{gen}^{i-1}(\mu_M^*\F\mid_{D}\otimes L^{M^2(x+\epsilon)})+h_{gen}^{i}(\mu_M^*\mathcal F\mid_{D}\otimes L^{M^2(x+\epsilon)})\big)\\
&\leq &  \epsilon 2^{n-1}u(\F)(M^+-M^-)^{n-1}.
\end{eqnarray*} This concludes the proof of the estimates (\ref{1}) (\ref{2}) (\ref{3}) under the assumption that $(b)_{n-1}$ holds.

Turning to $(a)_n$ and $(b)_n$,  note that the functions $h_{\F}^i(x\l)$ satisfy the statement of Corollaries \ref{inversion-Q} and \ref{continuity}. Therefore
the left derivative $D^-h_{\F}^i(x\l)$ and the right derivative $D^+h_{\F}^i(x\l)$ exist on all $x\in\mathbb Q$ (in fact on all $x\in U$ of Cor. \ref{continuity}), and they coincide away of a discrete subset. The inequalities $(\ref{1})_n$, $(\ref{2})_n$ and $(\ref{3})_n$ show that both derivatives are bounded above by the corresponding polynomials of degree $n-1$. Note that by Lemma \ref{effectivebound} and the assumption that $\F$ is pure, we have $h^0_{\F}(M^-\l)=0$  and $h^i_{\F}(M^+\l)=0$ for $i\geq 1$, and hence by integration, the above bounds for derivatives imply $(a)_n$ and $(b)_n$ for all $x\in U$ as above and hence, by continuity, for all $x\in\mathbb R$.
This concludes the proof of the Theorem for pure sheaves.

 Next, we prove the Theorem for all coherent sheaves $\F$ on $A$. Assume that  $\dim \F=n$.
  We consider the torsion filtration of $\mathcal F$:
\[T_0(\mathcal F)\subset T_1(\mathcal F)\subset\cdots\subset T_{n-1}(\mathcal F)\subset T_n(\mathcal F)= \mathcal F,\]
 where $ T_i(\mathcal F)$ is the maximal subsheaf of $\mathcal F$ of dimension $i$ and hence $\mathcal Q_i:=T_i(\mathcal F)/T_{i-1}(\mathcal F)$ is a pure sheaf of dimension $i$.
 We see that $h^0_{\F}(x\l)\leq h^0_{T_{n-1}(\F)}(x\l)+h^0_{\mathcal Q_n}(x\l)$ and we also have, adopting the previous notation,
 \begin{eqnarray*}h^0_{\F}((x+\epsilon)\l)-h^0_{\F}(x\l)&\leq &\frac{1}{M^{2g}}h^0_{\mu_M^*\F\mid_D}(M^2(x+\epsilon)\l)
 \\ &\leq &\frac{1}{M^{2g}}\big(h^0_{\mu_M^*T_{n-1}(\F)\mid_D}(M^2(x+\epsilon)\l)+h^0_{\mu_M^*\mathcal{Q}_n\mid_D}(M^2(x+\epsilon)\l)\big).
 \end{eqnarray*}
We then proceed by induction on $\dim \F$  and the results on the pure sheaf case to prove the continuity of the function $h^0_{\F}(x\l)$ and its boundedness. The proof of continuity for other cohomology rank functions of $\F$ is similar.

Similarly the proof of the statement of the Theorem for objects of the bounded derived category follows the same lines, using the functorial hypercohomology spectral sequences
\[E_2^{h,k}(\mu^*_m(\F)\otimes L^r\otimes P_\alpha):=H^h(A, \mathcal H^k(\mu_m^*\F\otimes L^r\otimes P_\alpha))\Rightarrow H^{h+k}(A, \mu_m^*\F\otimes L^r\otimes P_\alpha)\]
This time, for $x\in\mathbb Q$, $x=\frac{a}{b}$ with $b>0$ one defines the cohomological rank functions for the groups appearing at each page: $e_{r,\F}^{h,k}(x\l):=b^{-2g}
\dim E_r^{h,k}(\mu^*_b(\F)\otimes L^a\otimes P_\alpha)$ for general $\alpha\in \widehat A$. Using Proposition \ref{functions} these  functions are already defined in $\mathbb R$ minus a discrete set satisfying the property stated in  Corollary \ref{inversion-Q}. By induction on $r$ and on the dimension of the cohomology sheaves one proves that these functions can be extended to continuous functions  satisfying the same property. They are bounded as above. From this and the convergence one gets the same statements for the functions $h^i_\F(x\l)$. We leave the details to the reader.
 \end{proof}

\section{Critical points and jump loci}

A \emph{critical point for the function $x\mapsto h^i_\F(x\l)$}  is a $x_0\in\mathbb R$ where the function is not smooth.  We denote $S_{\F,\l}^i$ the set of critical points of $h_{\F}^i(x\l)$ and let $S_{\F, \l}=\cup_i S_{\F,\l}^i$.   This is the subject of this section.   In all examples we know, the critical points of a cohomological rank function are finitely many, and satisfy the conclusion of Corollary \ref{continuity}. We expect this to be true in general.

 It follows  from the results of Section 2 that for $x_0\in\mathbb Q$, or more generally for $x_0$ in the open set $U$ of Corollary \ref{continuity},   $x_0$ is a critical point if and only if the polynomials $P^-_{i,\F,x_0}$ and $P^+_{i,\F,x_0}$ do not coincide.  As we will see below it is easy to produce examples of rational critical points. However they can be irrational -- even for line bundles  on abelian varieties -- as shown by the following example.
\begin{example}\label{irrational} Let $(A,\l)$ be a polarized abelian variety and let  $M$ be a non-degenerate line bundle on $A$.
Consider the polynomial $P(x)=\chi_M(x\l)$. By Mumford, all roots of $P(x)$ are real numbers. Denote them by $\lambda_1>\lambda_2>\cdots>\lambda_k$, let $m_i$ be the multiplicity of $\lambda_i$ and finally denote by $\lambda_0=\infty$, $m_0=0$, and $\lambda_{k+1}=-\infty$. We know by \cite[Page 155]{mumford} that for any rational number $x=\frac{a}{b}\in (\lambda_{i+1}, \lambda_i)$ with $b>0$, the line bundle $M^{b}\otimes L^a$  is non-degenerate and has index $a_i:=\sum_{0\leq k\leq i}m_k$.  Hence
 \[h_{M}^{k}(x\l)=\begin{cases}(-1)^{k}P(x)&\hbox{if $k=a_i$ for some $0\leq i\leq k$, and $x\in (\lambda_{i+1}, \lambda_i)  $}\\
0&\hbox{ otherwise}\end{cases}\]
 Hence the set of critical points of $M$ is $\{\lambda_1,\ldots,\lambda_k\}$.
If $A$ is a simple abelian variety and $M$ and $L$ are linearly independent in $\mathrm{NS}(A)$ (such abelian varieties exist and they are called Shimura-Hilbert-Blumenthal varieties, see for instance \cite{dl}), then all roots of $P(x)$ are irrational. Actually, if some root $\lambda_i=\frac{a}{b}$ is rational, then $M^b\otimes L^a$ is a non-trivial degenerate line bundle and hence its kernel is a non-trivial abelian subvariety of $A$, which is a contradiction.
 \end{example}

A critical point $x_0\in\mathbb R$ is  said to be  \emph{of index $k$}  if the function $h^i_\F(x\l)$ is of class $\mathcal C^{k}$ but not $\mathcal C^{k+1}$ at $x_0$. By Corollaries \ref{inversion-Q} and \ref{continuity} if $x_0\in\mathbb Q$ (or, more generally, $x_0\in U$ as above) this can be equivalently stated as follows
 \[P^+_{i,\F,x_0}-P^-_{i,\F,x_0}=(x-x_0)^{k+1}Q(x)\quad\hbox{with $Q(x)\in\mathbb Q[x]$ of degree $\le g-k-1$ such that $Q(x_0)\ne 0$}\]
 (in particular, it follows that the index is at most $g-1$).The main result of this section is Proposition \ref{derivatives}, relating the index of a rational critical point with the dimension of the jump locus.

It is not difficult to exhibit cohomological rank functions with  critical points even of index zero. i.e. the function is non-differentiable at such a point. The  simple examples below serve also as illustration of the method of calculation provided by the results of Section 2.

 \begin{example}\label{counterexample}  Let $A=B\times E$, a principally polarized product of  a  principally polarized $(g-1)$-dimensional abelian variety $B$ and  an elliptic curve $E$. Let $\Theta_B$ be a principal polarization on $B$ and $p$ a closed point of $E$.  Let $\F=\mathcal{O}_B(\Theta_B)\boxtimes\mathcal{O}_E$. It is well known that:\\
 (a)  the FM transform -- on $E$ -- of  the sheaf $\OO_E$ is $k(\hat e)[-1]$, the one-dimensional skyscraper sheaf at the origin, in cohomological degree $1$.\\
 \noindent (b) The FM transform -- on B -- of the sheaf $\OO_B(-\Theta_B)$ is equal to $\OO_{\widehat B}(\Theta_{\widehat B})[-(g-1)]$.

\noindent By K\"unneth formula  it follows from (a) that $R^0\Phi_{\mathcal P}(\F)=0$, hence $h_{\F}^0(x\l)=0$ for $x<0$ (of course this was obvious from the beginning).
On the other hand, again from K\"unneth formula together with (a) and  (b) it follows that
 \[R\Phi_{\mathcal P^\vee}(\F^\vee)=R^g\Phi_{\mathcal P^\vee}(\F^\vee)[-g]=i_{\widehat B*}(\OO_{\widehat B}(\Theta_{\widehat B}))[-g],\]
  where $i_{\widehat B}:\widehat B\rightarrow \widehat A$ is the natural inclusion $\hat b\mapsto (\hat b,\hat e)$.
  Hence, for $x>0$
  \[h^0_{\F}(x\l)=(x)^gh^0_{R^g\Phi_{\mathcal P^\vee}(\F^\vee)}(\frac{1}{x})= x^g(1+\frac{1}{x})^{g-1}=x(1+x)^{g-1}\]
In conclusion
 \[h^0_{\F}(x\l)=\begin{cases}0&\hbox{for $x\leq 0$}\\
x(1+x)^{g-1}&\hbox{for $x\geq 0$}\end{cases}\]
(Of course the same calculation could have been worked out in a completely elementary way).
Hence $x_0=0$ is critical point of index zero.
\end{example}

\begin{example}\label{AJ-0} Let  $A$ be the Jacobian of a smooth curve of genus $g$, equipped with the natural principal polarization and let $i: C\hookrightarrow A$ be an Abel-Jacobi embedding. Let $p\in C$ and let $\F=i_*\OO_C((g-1)p)$. We claim that $x_0=0$ is a critical point of index zero for the function $h^0_\F(x\l)$.
Notice that $\F^{\vee}=i_*\omega_C(-(g-1)p)[1-g]$ and $\deg_C(\omega_C(-(g-1)p))=g-1$. Hence $R^0\Phi_{\mathcal P}(\F)=0$, while
\[R\Phi_{\mathcal P^\vee}(\F^\vee)=R^{g}\Phi_{\mathcal P^\vee}(\F^\vee)[-g]=R^1\Phi_{\mathcal P^\vee}\bigl(i_*\omega_C(-(g-1)p)\bigr)[-g]:=\mathcal H [-g]\]
 is a torsion sheaf in cohomological degree $g$ (supported at a translate of a theta-divisor, where it is of generic rank equal to $1$).
 From Proposition \ref{inversion-a} it follows that $h_{\F}^0(x\l)=0$ for $x\le 0$ and $h_{\F}^1(x\l)=0$ for $x\ge 0$. Hence, by (a) of \S1,
 \[h_{\F}^0(x\l)=\begin{cases}0&\hbox{for $x\leq 0$}\\
\chi_\F(x\l)=gx&\hbox{for $x\geq 0$}
\end{cases}\]
This proves what claimed.\\
One can show that $x_0=0$ is a critical point of index $g-d-1$ of the $h^0$-function of the sheaf $i_*\OO_C(dp)$, with $0\le d\le g-1$.
\end{example}

As it will be clear in the sequel, the previous examples are explained by the presence of a jump locus of codimension one.

\noindent\textbf{Jump loci. } We introduce some terminology. Let $(A,\l)$ be a polarized abelian variety. Let $\F\in\mathrm{D}^b(A)$ and $x_0\in\mathbb Q$. The \emph{jump locus} of the $i$-the cohomology of $\F$ at $x_0=\frac{a}{b}$ is the closed subscheme of $\widehat A$ consisting of the points $\alpha$ such that $h^i(A, (\mu_b^*\F)\otimes L^{ab}\otimes P_\alpha))$ is strictly greater than the generic value, where $L$ is a line bundle representing $\l$. A different choice of the line bundle $L$ changes the jump locus in a translate of it while a different fractional representation of $x_0$, say $x_0=\frac{ah}{bh}$ changes the jump locus in its  inverse image via the isogeny $ \mu_h:\widehat A\rightarrow \widehat A$. Therefore, strictly speaking,  for us the jump locus at $x_0$ of  a cohomological rank function $h^i_\F(x\l)$ will be an equivalence class of (reduced) subschemes with respect to the equivalence relation generated by translations
 and multiplication isogenies. In this paper we will be only concerned with the dimension of these loci. We will denote it by $\dim J^{i+}(\F\langle x_0\l\rangle)$.

 \begin{proposition}\label{derivatives} Let $\F\in\mathrm{D}^b(A)$. If $ x_0\in \mathbb Q$ is a critical point of index $k$ for $h^i_{\F,\l}$, then
  $\mathrm{codim}_{\widehat A}\, J^{i+}(\F\langle x_0\rangle)\le k+1$.
 \end{proposition}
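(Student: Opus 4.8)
The plan is to reduce to $x_0=0$, turn the index condition into an equality of the low-order coefficients of the two polynomials $P^\pm_{i,\F,x_0}$, and then read off that equality from the Fourier--Mukai transform via Serre--Grothendieck duality. First I would invoke Corollary \ref{inversion-Q} to reduce to $x_0=0$, so that by Corollary \ref{inversion} the two polynomials are, up to the common factor $1/\chi(\l)$, the Hilbert polynomials of $\G^+:=\varphi_\l^*R^i\Phi_{\mathcal P}(\F)$ and $\G^-:=\varphi_\l^*R^{g-i}\Phi_{\mathcal P^\vee}(\F^\vee)$, evaluated at $-\tfrac1x\l$ (multiplied by $(-x)^g$), resp. at $\tfrac1x\l$ (multiplied by $x^g$). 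Expanding, the coefficient of $x^m$ in $P^-_{i,\F,0}$ (resp. $P^+_{i,\F,0}$) is, up to the sign $(-1)^m$ (resp. $+1$) and the factor $1/\chi(\l)$, the codimension-$m$ coefficient of the Hilbert polynomial of $\G^+$ (resp. $\G^-$), i.e. a fixed multiple of $\mathrm{ch}_m(R^i\Phi_{\mathcal P}\F)\cdot\l^{g-m}$ (resp. $\mathrm{ch}_m(R^{g-i}\Phi_{\mathcal P^\vee}\F^\vee)\cdot\l^{g-m}$). Hence $h^i_{\F,\l}$ is of class $\mathcal C^{c-1}$ at $0$ exactly when the first $c$ coefficients match, that is when
\[\mathrm{ch}_m\big(R^{g-i}\Phi_{\mathcal P^\vee}(\F^\vee)\big)=(-1)^m\,\mathrm{ch}_m\big(R^i\Phi_{\mathcal P}(\F)\big)\qquad(m<c).\]
Proving the Proposition then reduces to verifying this for $c=\mathrm{codim}_{\widehat A}J^{i+}(\F\langle 0\rangle)$: since the order of vanishing of $P^+-P^-$ at $0$ equals $k+1$, the vanishing of its coefficients up to order $c-1$ forces $c\le k+1$.

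The comparison of Chern classes should come from Serre--Grothendieck duality (\ref{mukai-3}): writing $E:=\Phi_{\mathcal P}(\F)$ one has $\Phi_{\mathcal P^\vee}(\F^\vee)=E^\vee[-g]$, whence $R^{g-i}\Phi_{\mathcal P^\vee}(\F^\vee)=\EExt^{-i}(E,\OO_{\widehat A})$. The local-to-global (hyper)ext spectral sequence expresses this sheaf through the $\EExt^p(R^{i+p}\Phi_{\mathcal P}\F,\OO)$, $p\ge0$, with leading term $(R^i\Phi_{\mathcal P}\F)^\vee$ at $p=0$ and all higher terms supported on the loci where $R^{i+p}\Phi_{\mathcal P}\F$ fails to be locally free. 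Since $\mathrm{ch}_m(G^\vee)=(-1)^m\mathrm{ch}_m(G)$ for $G$ locally free, with the discrepancy supported on the non locally free locus of $G$, the displayed equality would follow once one knows that every locus entering the picture—the non locally free loci of $R^{j}\Phi_{\mathcal P}\F$ for $j\ge i$—has codimension $\ge c$: a coherent modification supported in codimension $\ge c$ leaves $\mathrm{ch}_m$ unchanged for $m<c$.

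The heart of the matter, and the step I expect to be the main obstacle, is exactly this codimension control, i.e. showing that the non locally free loci of $R^{j}\Phi_{\mathcal P}\F$ for $j\ge i$ are contained in $J^{i+}(\F\langle 0\rangle)$, hence of codimension $\ge c$. I would argue this through the base-change spectral sequence $\mathrm{Tor}_{-p}(R^q\Phi_{\mathcal P}\F,k(\alpha))\Rightarrow H^{p+q}(A,\F\otimes P_\alpha)$: a failure of local freeness of some $R^{j}\Phi_{\mathcal P}\F$ with $j\ge i$ contributes a nonzero $\mathrm{Tor}$ term to $H^i(A,\F\otimes P_\alpha)$ at a general point of each component of that locus, forcing $h^i$ to exceed its generic value there and so placing the locus in $J^{i+}$. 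The genuinely delicate point is to check that these contributions are not annihilated by the differentials of the spectral sequence along a dense subset of such a component; granting that any such cancellation occurs only in higher codimension, the non locally free loci lie in $J^{i+}$ up to codimension $>c$, which suffices. Substituting $c=\mathrm{codim}_{\widehat A}J^{i+}$ into the coefficient comparison then shows that $h^i_{\F,\l}$ is of class $\mathcal C^{c-1}$ at $x_0$, and therefore $\mathrm{codim}_{\widehat A}J^{i+}(\F\langle x_0\rangle)\le k+1$.
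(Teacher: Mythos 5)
Your overall architecture matches the paper's: reduce to $x_0=0$ via Corollary \ref{inversion-Q}, use Corollary \ref{inversion} to identify the coefficients of $P^{\pm}_{i,\F,0}$ with (fixed multiples of) $(-1)^m\,\mathrm{ch}_m\bigl(R^i\Phi_{\mathcal P}(\F)\bigr)\cdot\l^{g-m}$ and $\mathrm{ch}_m\bigl(R^{g-i}\Phi_{\mathcal P^\vee}(\F^\vee)\bigr)\cdot\l^{g-m}$, and observe that matching these for all $m<c:=\mathrm{codim}\,J^{i+}(\F\langle 0\rangle)$ forces $c\le k+1$. The gap is precisely in the step you flag as the main obstacle. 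You propose to prove the Chern character identity by running the local-to-global Ext spectral sequence with $E_2$-terms $\EExt^p(R^{i+p}\Phi_{\mathcal P}\F,\OO_{\widehat A})$, and to control it by showing that the non-locally-free loci of \emph{all} the $R^{j}\Phi_{\mathcal P}\F$ with $j\ge i$ lie in $J^{i+}(\F\langle 0\rangle)$. That containment is false in general: for $j>i$ the sheaf $R^{j}\Phi_{\mathcal P}\F$ is governed by the behaviour of $h^j(\F\otimes P_\alpha)$, not of $h^i$. For instance, with $i=0$ and $\F=\OO_A\oplus\G[-1]$ one has $J^{0+}(\F\langle 0\rangle)=J^{0+}(\OO_A)=\{\hat e\}$, while $R^{1}\Phi_{\mathcal P}\F$ contains $R^{0}\Phi_{\mathcal P}\G$ as a direct summand, which for suitable $\G$ fails to be locally free along a divisor. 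On top of that, you explicitly grant rather than prove that the differentials do not cancel the relevant $\mathrm{Tor}$-contributions except in high codimension; so as written the argument does not close.

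The paper's proof avoids both difficulties by never looking at the higher terms of that spectral sequence. It uses only the edge homomorphism $R^{g-i}\Phi_{\mathcal P^\vee}(\F^\vee)\rightarrow \mathcal{H}om\bigl(R^i\Phi_{\mathcal P}(\F),\OO_{\widehat A}\bigr)=:\mathcal H$ coming from Grothendieck duality (\ref{mukai-3}), and shows, by cohomology-and-base-change together with Serre duality, that this map is an isomorphism of \emph{vector bundles} on the open set $V$ where $h^i(\F\otimes P_\alpha)$ takes its generic value, i.e.\ exactly on the complement of $J^{i+}(\F\langle 0\rangle)$. Consequently $\mathrm{ch}\bigl(R^{g-i}\Phi_{\mathcal P^\vee}(\F^\vee)\bigr)-\mathrm{ch}(\mathcal H)\in\mathrm{CH}^{\ge c}(\widehat A)$, and since $R^i\Phi_{\mathcal P}(\F)$ is locally free on $V$ one also has $\mathrm{ch}_m(\mathcal H)=(-1)^m\mathrm{ch}_m\bigl(R^i\Phi_{\mathcal P}(\F)\bigr)$ for $m<c$. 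The only codimension hypothesis needed is thus on $J^{i+}$ itself, which is exactly what is assumed for contradiction. If you replace your spectral-sequence step by this single edge-map comparison, the rest of your write-up goes through essentially verbatim.
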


\begin{proof}
We may assume that $ x_0=0$.
By Corollary \ref{inversion} we know that in a left neighborhood of $0$, $h_{\mathcal F}^i(x\l)=\frac{(-x)^g}{\chi(\l)} \chi_{\varphi_{\l}^*R^i\Phi_{\mathcal P}(\mathcal{F})}(-\frac{1}{x}\l)$ and in a right neighborhood of $0$, $h_{\mathcal F}^i(x\l)=\frac{x^g}{\chi(\l)} \chi_{\varphi_\l^*R^{g-i}\Phi_{\mathcal P^\vee}(\mathcal{F}^{\vee})}(\frac{1}{x}\l)$.
We denote
\[P_1(x):=\chi_{\varphi_\l^*R^i\Phi_{\mathcal P}(\mathcal{F})}(x\l)=a_gx^g+a_{g-1}x^{g-1}+\cdots+a_{1}x+a_0\]
 and
\[P_2(x):=\chi_{\varphi_\l^*R^{g-i}\Phi_{\mathcal P^\vee}(\mathcal{F}^{\vee})}(x\l)=b_gx^g+b_{g-1}x^{g-1}+\cdots+b_{1}x+b_0.\]
It follows $h_{\mathcal{F}}^i(x\l) $ is strictly of class $\mathcal C^k$ at $0$ if and only if
\begin{equation}\label{change-sign}(-1)^{j}a_{g-j}=b_{g-j}\quad\hbox{for \ $j=0,\>\cdots\>, k$}\quad \mathrm{and}  \quad (-1)^{k+1}a_{g-k-1}\neq b_{g-k-1}
\end{equation}

We also note that for a coherent sheaf $\mathcal Q$, $$\chi_{\mathcal Q}(x\l)=\int_{A}\mathrm{ch}(\mathcal Q)e^{x\l}=\sum_{j\geq 0}\frac{1}{(g-j)!}(\mathrm{ch}_j(\Q)\cdot \l^{g-j})_Ax^{g-j}.$$

On the other hand, by Grothendieck duality (\ref{mukai-3}) we have $\Phi_{\mathcal P}(\mathcal F)^\vee=\Phi_{\mathcal{P}^{\vee}}(\mathcal{F}^{\vee})[g]$. Thus we have a natural homomorphism $R^{g-i}\Phi_{\mathcal{P}^{\vee}}(\mathcal{F}^{\vee})\rightarrow \mathcal{H}om(R^i\Phi_{\mathcal P}(\mathcal{F}), \mathcal{O}_{\widehat A}):=\mathcal H$ by the Grothendieck spectral sequence. By base change and Serre duality such homomorphism is an isomorphism of vector bundles on the open set $V$ whose closed points are the $\alpha\in\widehat A$ such that  $h^i(\mathcal{F}\otimes P_{\alpha})$ takes the generic value.  Now assume that the complement of $V$, i.e.  a representative of $J^{i+}(\F)$, has codimension $>k+1$. Hence $\mathrm{ch}(R^{g-i}\Phi_{\mathcal{P}^{\vee}}(\mathcal{F}^{\vee}))-\mathrm{ch}(\mathcal H)\in \mathrm{CH}^{> k+1}(\widehat{A}).$ Since $R^i\Phi_{\mathcal P}(\mathcal F)$ is a vector bundle on $V$, thus $\mathrm{ch}_j(\mathcal{H})=(-1)^j\mathrm{ch}_j(R^i\Phi_{\mathcal P})(\mathcal F)$ for $j\leq k+1$. This implies that
$$(-1)^ja_{g-j}=\frac{(-1)^j}{(g-j)!}(\varphi_\l^*\mathrm{ch}_j\big(R^i\Phi_{\mathcal P})(\mathcal F)\cdot \l^{g-j}\big)_A=\frac{1}{(g-j)!}(\varphi_\l^*\mathrm{ch}_j\big(R^{g-i}\Phi_{\mathcal P^{\vee}})(\mathcal F^{\vee})\cdot \l^{g-j}\big)_A=b_{g-j},$$
for $j=0,\ldots, k+1$, which  contradicts  (\ref{change-sign}).  We conclude the proof.

 \end{proof}

 \section{Generic vanishing, M-regularity and IT(0) of  $\mathbb Q$-twisted sheaves on abelian varieties}

The notions of GV, M-regular and IT(0)-sheaves (and other related ones) are useful in the study of the geometry of abelian and irregular varieties via the Fourier-Mukai transform associated to the Poincar\'e line bundle. In this section we extend such notions
 to the $\mathbb Q$-twisted setting.  In doing that we  don't claim any originality, as this point of view was already taken, at least implicitly, in the work \cite{pp3}, Proof of Thm 4.1, and goes back to work of Hacon (\cite{hac}). It turns out that  the $\mathbb Q$-twisted formulation of Hacon's criterion for being GV, and related results, is simpler and more expressive even for usual (non-$\mathbb Q$-twisted) coherent sheaves or, more generally, objects of $\mathrm{D}^b(A)$. In the last part of the section we go back to cohomological rank functions. First we show how they can be used to provide a characterization of M-regularity and related notions. Finally  we show  the maximal critical points relates to the notion of $\mathbb Q$-twisted GV  sheaves.

 As for jump loci (see the previous Section), one can define the \emph{cohomological support locus} of the $i$-th cohomology of the $\mathbb Q$-twisted object of $\mathrm{D}^b(A)$, say $\F\langle x_0\l \rangle$, as the equivalence class (with respect to the equivalence relation generated by translations and inverse images by multiplication-isogenies) of the loci
 \[\{\alpha\in\widehat A\>|\> h^i(A, (\mu_b^*\F)\otimes L^{ab}\otimes P_\alpha))>0\}.\]
 If $h^i_\F(x_0\l)=0$ it coincides with the jump locus while it is simply $\widehat A$ if $h^i_\F(x\l)>0$. Its dimension is well-defined, and we will denote it $\dim V^i(\F\langle x_0\l\rangle)$.

 The $\mathbb Q$-twisted object $\F\langle x_0\l\rangle $ is said to be \emph{GV}   if $\text{codim}_{\widehat A}V^i(\F\langle x_0\l\rangle)\ge i$ for all $i>0$.  It is said to be \emph{a M-regular sheaf}   if $\text{codim}_{\widehat A}V^i(\F\langle x_0\l\rangle)> i$ for all $i>0$.
 It is said to \emph{satisfy the index theorem with index 0, IT(0)} for short, if $V^i(\F\langle x_0\l\rangle)$ is empty for all $i\ne 0$.

 If $\F\langle x_0\rangle $ is $\mathbb Q$-twisted coherent sheaf or, more generally, a $\mathbb Q$-twisted object of $\mathrm{D}^b(A)$ such that $V^i(\F\langle x_0\rangle)$ is empty for $i<0$, such conditions can be equivalently stated described as follows:

  \begin{theorem}\label{A} (a) $\F\langle x_0\l\rangle $ is GV if and only if, for one (hence for all) representation $x_0=\frac{a}{b}$
 \[\Phi_{\mathcal P^\vee}(\mu_b^*\F^\vee\otimes L^{-ab})=R^g\Phi_{\mathcal P^\vee}(\mu_b^*\F^\vee\otimes L^{-ab})[-g]\>.\>\footnote{This condition is usually expressed by saying that $\mu_b^*\F^\vee\otimes L^{-ab}$ satisfies the Weak Index Theorem with index $g$.}\]
 If this is the case
 \[R^i\Phi_{\mathcal P}(\mu_b^*(\F)\otimes L^{ab})=\mathcal{E}xt^i_{\OO_{\widehat A}}(R^g\Phi_{\mathcal P^\vee}(\mu_b^*\F^\vee\otimes L^{-ab}),\OO_{\widehat A})\]

\noindent (b) Assume that $\F\langle x_0\l\rangle $ is GV. Then it is M-regular if and only if the sheaf $R^g\Phi_{\mathcal P^\vee}(\mu_b^*\F^\vee\otimes L^{-ab})$ is torsion-free.

\noindent (c) Assume that $\F\langle x_0\l\rangle $ is GV, Then it is IT(0) if the sheaf $R^g\Phi_{\mathcal P^\vee}(\mu_b^*\F^\vee\otimes L^{-ab})$ is locally free. Equivalently
\begin{equation}\label{it0}\Phi_{\mathcal P}(\mu_b^*(\F)\otimes L^{ab})=R^0\Phi_{\mathcal P}(\mu_b^*(\F)\otimes L^{ab})\>.
\end{equation}
\end{theorem}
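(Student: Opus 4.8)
The plan is to reduce everything to an honest object of $\mathrm{D}^b(A)$ and then to recognize (a) as a reformulation, via Grothendieck--Serre duality (\ref{mukai-3}), of Hacon's criterion (\cite{hac}). First I would fix a representation $x_0=\frac{a}{b}$ and set $\G:=\mu_b^*\F\otimes L^{ab}$, so that a representative of $V^i(\F\langle x_0\l\rangle)$ is $\{\alpha\in\widehat A\mid h^i(A,\G\otimes P_\alpha)>0\}=\mathrm{Supp}\,R^i\Phi_{\mathcal P}(\G)$ by base change. Since the dimension of these loci is unchanged under the translations and multiplication isogenies relating different representations, the GV, M-regular and IT(0) properties of $\F\langle x_0\l\rangle$ coincide with those of $\G$, and the standing hypothesis becomes $R^i\Phi_{\mathcal P}(\G)=0$ for $i<0$, i.e. $\Phi_{\mathcal P}(\G)\in\mathrm{D}^{\ge 0}(\widehat A)$. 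Writing $\mathcal{K}:=R^g\Phi_{\mathcal P^\vee}(\G^\vee)$, it then suffices to prove the three equivalences for $\G$.

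For (a), I would first record that by duality (\ref{mukai-3}) one has $\Phi_{\mathcal P^\vee}(\G^\vee)=(\Phi_{\mathcal P}\G)^\vee[-g]$, so that ``$\G^\vee$ satisfies $\mathrm{WIT}_g$'' is literally the statement that $(\Phi_{\mathcal P}\G)^\vee$ is a sheaf in degree $0$. The easy implication is $\mathrm{WIT}_g\Rightarrow$ GV: if $\Phi_{\mathcal P^\vee}(\G^\vee)=\mathcal{K}[-g]$, dualizing gives $\Phi_{\mathcal P}(\G)=\mathcal{K}^\vee$, whence $R^i\Phi_{\mathcal P}(\G)=\EExt^i(\mathcal{K},\OO_{\widehat A})$; as $\EExt^i$ into the structure sheaf of a smooth variety is supported in codimension $\ge i$, this is exactly the GV inequality and simultaneously yields the displayed formula for the $R^i\Phi_{\mathcal P}$.

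The hard part will be the converse, GV $\Rightarrow\mathrm{WIT}_g$, i.e. showing $\Phi_{\mathcal P^\vee}(\G^\vee)$ is concentrated in degree $g$. My plan is to obtain the two one-sided concentrations from two different inputs. For the upper bound, base change and Serre duality identify the support of $R^m\Phi_{\mathcal P^\vee}(\G^\vee)$ with $V^{g-m}(\G)$; for $m>g$ this is $V^{<0}(\G)$, empty by the standing hypothesis, so $\Phi_{\mathcal P^\vee}(\G^\vee)\in\mathrm{D}^{\le g}$. For the lower bound I would feed the GV inequalities, which give $\EExt^p(R^i\Phi_{\mathcal P}(\G),\OO_{\widehat A})=0$ for $p<i$, into the hyperext spectral sequence $E_2^{p,q}=\EExt^p(R^{-q}\Phi_{\mathcal P}(\G),\OO_{\widehat A})\Rightarrow \mathcal{H}^{p+q}((\Phi_{\mathcal P}\G)^\vee)$; this forces $\mathcal{H}^{j}((\Phi_{\mathcal P}\G)^\vee)=0$ for $j<0$, i.e. $R^m\Phi_{\mathcal P^\vee}(\G^\vee)=0$ for $m<g$, so $\Phi_{\mathcal P^\vee}(\G^\vee)\in\mathrm{D}^{\ge g}$. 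Combining the two bounds gives concentration in degree $g$. The point I would emphasize is that neither half is formal on its own: GV controls $(\Phi_{\mathcal P}\G)^\vee$ only from below, while the vanishing $V^{<0}=\emptyset$ controls it from above, and it is exactly their conjunction that recovers Hacon's theorem.

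Finally, (b) and (c) I would deduce directly from the formula $R^i\Phi_{\mathcal P}(\G)=\EExt^i(\mathcal{K},\OO_{\widehat A})$ of part (a), using standard homological algebra (\cite{hl}). Indeed IT(0) means $R^i\Phi_{\mathcal P}(\G)=0$ for all $i>0$, i.e. $\EExt^i(\mathcal{K},\OO_{\widehat A})=0$ for $i>0$, which on a smooth variety is equivalent to $\mathcal{K}$ being locally free (and to (\ref{it0})). For (b), M-regularity means $\mathrm{codim}\,\mathrm{Supp}\,\EExt^i(\mathcal{K},\OO_{\widehat A})>i$ for all $i>0$, which is precisely the homological characterization of $\mathcal{K}$ being torsion-free. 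The independence of the chosen representation $x_0=\frac{a}{b}$ follows throughout from the functoriality of $\Phi_{\mathcal P}$ under multiplication isogenies, exactly as in the discussion preceding the statement.
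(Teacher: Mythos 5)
Your overall architecture is sound and in fact supplies a proof where the paper gives none: the paper disposes of Theorem \ref{A} in one sentence, reducing to the untwisted object $\G:=\mu_b^*\F\otimes L^{ab}$ exactly as you do and then citing \cite[\S1]{msri} and \cite[\S3]{pp2} for the integral case. What you write out --- Grothendieck--Serre duality (\ref{mukai-3}) turning $\mathrm{WIT}_g$ for $\G^\vee$ into $\Phi_{\mathcal P}(\G)=\mathcal K^\vee$ with $\mathcal K:=R^g\Phi_{\mathcal P^\vee}(\G^\vee)$, the spectral sequence $\EExt^p(R^{-q}\Phi_{\mathcal P}(\G),\OO_{\widehat A})\Rightarrow R^{p+q+g}\Phi_{\mathcal P^\vee}(\G^\vee)$ for the converse, and the homological characterizations of torsion-free and locally free sheaves for (b) and (c) --- is precisely the argument of those references, so your route and the paper's are the same: you are unwinding the citation. (A terminological quibble: what you reprove is the Pareschi--Popa duality characterization of GV; in this paper ``Hacon's criterion'' refers to Theorem \ref{B}.)

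There is one step you must repair. You assert that a representative of $V^i(\F\langle x_0\l\rangle)$ \emph{equals} $\mathrm{Supp}\,R^i\Phi_{\mathcal P}(\G)$ ``by base change''. Base change gives only the inclusion $\mathrm{Supp}\,R^i\Phi_{\mathcal P}(\G)\subseteq V^i(\G)$, and the two sets differ in general: for $\G=\OO_A$ one has $V^0(\OO_A)=\{\hat e\}$ while $R^0\Phi_{\mathcal P}(\OO_A)=0$. The correct inclusion is all you need for $\mathrm{GV}\Rightarrow\mathrm{WIT}_g$ (it yields $\mathrm{codim}\,\mathrm{Supp}\,R^i\Phi_{\mathcal P}(\G)\ge \mathrm{codim}\,V^i(\G)\ge i$, which feeds your spectral sequence) and for the bound $\Phi_{\mathcal P^\vee}(\G^\vee)\in\mathrm{D}^{\le g}$. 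But in the direction $\mathrm{WIT}_g\Rightarrow\mathrm{GV}$ your formula $R^i\Phi_{\mathcal P}(\G)=\EExt^i(\mathcal K,\OO_{\widehat A})$ only bounds the codimension of $\mathrm{Supp}\,R^i\Phi_{\mathcal P}(\G)$, and passing from there to $\mathrm{codim}\,V^i(\G)\ge i$ goes against the inclusion that base change actually provides. The standard repairs are either \cite[Lemma 3.6]{pp2} (the equivalence of the two systems of codimension inequalities, which this paper itself invokes when characterizing GV via rank functions in Section 5), or the direct computation $h^i(\G\otimes P_\alpha)=\dim\mathrm{Tor}_i^{\OO_{\widehat A}}(\mathcal K,k(\alpha'))$, obtained from $\Phi_{\mathcal P}(\G)=\mathcal K^\vee$ together with the Fourier--Mukai equivalence and Serre duality on $\widehat A$; with that identity, $\mathrm{codim}\,V^i(\G)\ge i$ is the Auslander--Buchsbaum bound on the locus where $\mathcal K$ has projective dimension at least $i$. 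Once this is inserted, parts (a), (b) and (c) go through as you describe.
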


 These results follow immediately from the same statements for coherent sheaves or objects in $\mathrm{D}^b(A)$, see e.g. the survey \cite[\S1]{msri}, or \cite[\S3]{pp2}, where the subject is treated in much greater generality.

 In this language  well known criteria of Hacon (\cite{hac})  can be stated as follows:

  \begin{theorem}\label{B}  \noindent (a) $\F\langle x_0\l\rangle$ is GV if and only if $\F\langle (x_0+x)\l\rangle$ is IT(0) for sufficiently small $x\in \mathbb Q^+$. Equivalently
  $\F\langle x_0\l\rangle$ is GV if and only if $\F\langle (x_0+x)\l\rangle$ is IT(0) for all $x\in \mathbb Q^+$.

  \noindent (b) If $\F\langle x_0\l\rangle$ is GV but not IT(0) then $\F\langle (x_0-x)\l\rangle$ is not GV for all $x\in\mathbb Q^+$.

 \noindent (c)  $\F\langle x_0\l\rangle$ is IT(0) if and only if $\F\langle (x_0-x)\l\rangle $ is IT(0) for sufficiently small $x\in\mathbb Q^+$.
  \end{theorem}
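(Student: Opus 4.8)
The plan is to reduce all three assertions to the case $x_0=0$ and then to recognize them as Hacon's generic-vanishing criteria, read through the dictionary of Theorem \ref{A}. Writing $x_0=\frac ab$ with $b>0$, set $\mathcal H:=\mu_b^*\F\otimes L^{ab}$. By Definition \ref{def} the cohomological support loci of $\F\langle x_0\l\rangle$ \emph{are} those of the ordinary object $\mathcal H$, so $\F\langle x_0\l\rangle$ is GV (resp. IT(0)) if and only if $\mathcal H$ is; and by the rescaling formula $h^i_\F((x_0+y)\l)=b^{-2g}h^i_{\mathcal H}(b^2y\l)$ used in the proof of Corollary \ref{inversion-Q}, the twist $\F\langle(x_0\pm x)\l\rangle$ has the same GV/IT(0) status as $\mathcal H\langle\pm b^2x\l\rangle$ (the two rank functions are proportional and the support loci agree up to an isogeny, which preserves codimension and emptiness). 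Since $b^2x$ ranges over $\mathbb Q^+$ together with $x$, preserving both ``small'' and ``all'', it suffices to treat $x_0=0$.

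Part (a) is the heart of the matter. By Theorem \ref{A}(a), $\F\langle 0\rangle$ is GV exactly when $\F^\vee$ satisfies WIT$_g$, i.e. $\Phi_{\mathcal P^\vee}(\F^\vee)=\G[-g]$ for the sheaf $\G:=R^g\Phi_{\mathcal P^\vee}(\F^\vee)$. To prove ``$\Rightarrow$'' I would compute the transform of $\mu_d^*\F^\vee\otimes L^{-cd}$ for $x=\frac cd\in\mathbb Q^+$: by the exchange formulas (\ref{mukai-1}) and (\ref{mukai-2}) it equals, up to shift, a derived Pontryagin product $\mu_{d*}\G*\E$, where $\E:=\Phi_{\mathcal P^\vee}(L^{-cd})$ is a vector bundle placed in a single cohomological degree ($L^{-cd}$ being anti-ample). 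The decisive input is the positivity of $\E$: by the analogue of (\ref{mukai-4}), the pullback $\varphi_\l^*\E$ is a direct sum of copies of a positive power of $L$, so Serre vanishing (property (c) of Section 1) forces this Pontryagin product to be a locally free sheaf concentrated in one degree. Via Serre duality and Theorem \ref{A}(c) this says precisely that $\F\langle x\l\rangle$ is IT(0). For ``$\Leftarrow$'' one lets $x\to 0^+$: local freeness of the transforms $R^g\Phi_{\mathcal P^\vee}(\mu_d^*\F^\vee\otimes L^{-cd})$, together with cohomology and base change, forces $R^{<g}\Phi_{\mathcal P^\vee}(\F^\vee)=0$ in the limit, i.e. GV (this is the easy companion direction of Hacon's criterion). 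Finally, the equivalence between ``for sufficiently small $x$'' and ``for all $x$'' follows since IT(0) is preserved under further positive twisting (again property (c) of Section 1): IT(0) on some $[s,\infty)$ with $s>0$ arbitrarily small sweeps out all of $\mathbb Q^+$.

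Statements (b) and the reverse implication of (c) are then formal consequences of (a). For (b), if $\F\langle(x_0-x)\l\rangle$ were GV for some $x\in\mathbb Q^+$, then by (a) the twist $\F\langle((x_0-x)+x)\l\rangle=\F\langle x_0\l\rangle$ would be IT(0), contradicting the hypothesis. The same mechanism gives ``$\Leftarrow$'' of (c): if $\F\langle(x_0-x)\l\rangle$ is IT(0), hence GV, for small $x>0$, then (a) yields IT(0) at $(x_0-x)+x=x_0$. The forward implication of (c) is the openness of the IT(0) condition in the twist parameter: when $\F\langle x_0\l\rangle$ is IT(0) the complex $R\Phi_{\mathcal P^\vee}(\mu_b^*\F^\vee\otimes L^{-ab})$ is a vector bundle concentrated in a single degree, an open condition, so it persists for twists slightly below $x_0$; the comparison between different denominators is handled by the isogeny-compatibility of jump loci recorded in Section 4.

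I expect the main obstacle to be the positivity step in ``$\Rightarrow$'' of (a): controlling the derived Pontryagin product $\mu_{d*}\G*\E$ and proving that it is locally free and concentrated in a single degree. This is exactly the analytic core of Hacon's generic vanishing theorem, and it is there that the full force of the exchange formulas (\ref{mukai-2}), (\ref{mukai-5}) combined with Serre vanishing is needed; the remaining steps are either formal consequences of (a) or standard semicontinuity.
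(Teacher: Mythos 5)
Your overall architecture agrees with the paper's: reduce to $x_0=0$, prove (a), then deduce (b) and the reverse implication of (c) formally from the ``for all $x\in\mathbb Q^+$'' form of (a); those formal deductions are correct and are exactly what the paper does. The genuine gaps are in the two substantive implications. First, ``$\F\langle(x_0+x)\l\rangle$ IT(0) for small $x\in\mathbb Q^+$ implies $\F\langle x_0\l\rangle$ GV'' is not the ``easy companion direction'' of Hacon's criterion: your argument appeals to cohomology and base change ``in the limit'', but the sheaves $\mu_d^*\F^\vee\otimes L^{-cd}$ for varying $c/d$ live on different isogeny covers and form no flat family, so there is no limit to pass to, and base change controls jumping in $\alpha\in\widehat A$, not in the twist parameter. (A correct short argument does exist inside the paper's toolkit: by Corollary \ref{inversion}, $h^i_\F(x\l)=\frac{x^g}{\chi(\l)}\chi_{\varphi_\l^*R^{g-i}\Phi_{\mathcal P^\vee}(\F^\vee)}(\frac1x\l)$ for small $x\in\mathbb Q^+$; if these vanish for all such $x$ and all $i>0$, the Hilbert polynomials of $R^{j}\Phi_{\mathcal P^\vee}(\F^\vee)$ vanish identically for $j<g$, hence these sheaves are zero, which is GV by Theorem \ref{A}(a) --- note this uses only generic vanishing, not IT(0). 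But this is not the argument you wrote.) Second, the forward implication of (c) cannot be obtained from ``openness of IT(0) in the twist parameter'': again there is no family in which openness could be invoked, and this direction is precisely the content of Hacon's second criterion, which the paper quotes.

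What the paper actually does, and what is absent from your write-up, is (i) to quote both Hacon criteria from \cite{pp2} as black boxes --- $\mu_b^*\F\otimes L^{ab}$ is GV (resp.\ satisfies (\ref{it0})) if and only if its tensor product with $\Phi^{\widehat A\rightarrow A}_{\mathcal P}(N^{-k})[g]$ (resp.\ with $\Phi^{\widehat A\rightarrow A}_{\mathcal P}(N^{k})$) is IT(0) for $k\gg0$, $N$ ample on $\widehat A$ --- and (ii) to convert these into statements about small rational perturbations of the twist by taking $N=L_\delta$ representing the dual polarization and using $\varphi_\l^*\l_\delta=d_1d_g\l$ and $\varphi_{\l_\delta}\circ\varphi_\l=\mu_{d_1d_g}$ to identify $\mu_{d_1d_gk}^*\Phi^{\widehat A\rightarrow A}_{\mathcal P}(L_\delta^{\pm k})$ with copies of $L^{\mp d_1d_gk}$, i.e.\ with the $\mathbb Q$-twist $\mp\frac{1}{d_1d_gk}\l$. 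This dual-polarization computation is the hinge of the proof and nothing in your proposal replaces it. Finally, your forward direction of (a) is actually easier than you fear: once Theorem \ref{A}(a) gives $\Phi_{\mathcal P^\vee}(\F^\vee)=\G[-g]$ with $\G$ a sheaf, the chain of isomorphisms in the proof of Proposition \ref{inversion-a} identifies $H^i(\mu_d^*\F\otimes L^{cd}\otimes P_\alpha)$ with an Ext group of degree $i+g$ between coherent sheaves on $\widehat A$, which vanishes for $i>0$ and for every $\alpha$ simply because Ext groups between coherent sheaves on a smooth $g$-fold vanish above degree $g$; no control of the Pontryagin product $\hat\mu_{d*}\G*\E$ and no positivity of $\E$ is needed there. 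The positivity you worry about is needed only in the converse, which is the step you have not supplied.
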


  \begin{proof} (a) Let $x_0=\frac{a}{b}$. We have that $\F\langle x_0\l\rangle$ is GV if and only if $\mu_b^*(\F)\otimes L^{ab}$ is GV. Hacon's criterion (see \cite[Thm A]{pp2}) states that this is the case if and only if
  \begin{equation}\label{haconbis}H^i(\mu_b^*(\F)\otimes L^{ab}\otimes \Phi^{\widehat A\rightarrow A}_{\mathcal P}(N^{-k})[g])=0
  \end{equation}
 for all $i\ne 0$ and  for all sufficiently big $k\in \mathbb Z$, where $N$ is an ample line bundle on $\widehat A$.  Equivalently (up to taking a higher lower bound for $k$),
 \begin{equation}\label{hacon}
 \mu_b^*(\F)\otimes L^{ab}\otimes \Phi^{\widehat A\rightarrow A}_{\mathcal P}(N^{-k})[g]\quad\hbox{is}\quad IT(0)
 \end{equation}
 for sufficiently big $k$.
   We take as $N=L_\delta$ a line bundle representing the polarization $\l_\delta$ dual to $\l$ (\cite{birke-lange} \S14.4). By  Prop 14.4.1 of \emph{loc. cit.}  we have that
\begin{equation}\label{np-2}
\varphi_{\l}^*\l_\delta=d_1d_g\l
\end{equation}
  and
  \begin{equation}\label{np-3}
  \varphi_{\l_\delta}\circ\varphi_{\l}=\mu_{d_1d_g}
  \end{equation}
  where $(d_1,\dots ,d_g)$ is the type of $\l$.
 Combining with (\ref{mukai-4}) we get
 \[\mu_{d_1d_gk}^*\Phi^{\widehat A\rightarrow A}_{\mathcal P}(L_\delta^{k})=  \varphi_{\l}^*\varphi_{\l_\delta}^*\mu_k^*\Phi^{\widehat A\rightarrow A}_{\mathcal P}(L_\delta^{k})=(\varphi_\l^*(L_\delta^{-k}))^{\oplus k^g\chi(\l_\delta)}=
 (L^{-d_1d_gk})^{\oplus k^g\chi(\l_\delta)}\]
 Loosely speaking, we can think of the vector bundle $\Phi^{\widehat A\rightarrow A}_{\mathcal P}(L_\delta^{k})$ as representative of $(-\frac{1}{d_1d_gk}\l)^{\oplus k^g\chi(\l_\delta)}$. It follows, after a little calculation,  that (\ref{hacon}), hence the fact that $\F\langle x_0\l\rangle$ is GV, is equivalent to the fact that $\F\langle (x_0+\frac{1}{d_1d_gk})\l\rangle$ is IT(0) for sufficiently big $k$.  This is in turn equivalent to the fact that $\F\langle (x_0+x)\l\rangle$ is IT(0) for sufficiently small $x\in \mathbb Q^+$ because the tensor product of an IT(0) (or, more generally, GV) sheaf and a locally free IT(0) sheaf is IT(0) (\cite[Prop. 3.1]{pp3} ). This proves the first statement of (a).
 The second statement follows  again from \emph{loc.cit.}\\
  (b) follows from (a).\\
  (c) is proved as (a) using  a similar Hacon's criterion telling that (\ref{it0}) is equivalent to the fact that
$\mu_b^*(\F)\otimes L^{ab}\otimes \Phi^{\widehat A\rightarrow A}_{\mathcal P}(N^{k})$ is IT(0) for sufficiently big $k$.
  \end{proof}

Using the cohomological rank functions on the left neighborhood of a rational point, we have the following characterization of GV-sheaves and M-regular sheaves.
\begin{proposition}\label{gv&M}
(a) \ $\F\langle x_0\l\rangle$ is GV, if and only if $h_{\F}^i((x_0-x)\l)=O(x^i)$ for sufficiently small $x\in\mathbb Q^+$, for all $i\geq 1$.

\noindent (b)  \  $\F\langle x_0\l\rangle$ is M-regular, if and only if $h_{\F}^i((x_0-x)\l)=O(x^{i+1})$ for sufficiently small $x\in\mathbb Q^+$, for all $i\geq 1$.
\end{proposition}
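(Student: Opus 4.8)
The plan is to reduce to the case $x_0=0$ and then convert the two asymptotic conditions into codimension statements about the cohomology sheaves $R^i\Phi_{\mathcal P}(\F)$, where they will match the definitions of GV and M-regularity via Theorem \ref{A}.

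First I would reduce to $x_0=0$. Using the identity $h^i_\F((x_0+y)\l)=b^{-2g}h^i_{\mu_b^*\F\otimes L^{ab}}(b^2y\l)$ from the proof of Corollary \ref{inversion-Q}, replacing $\F$ by $\G:=\mu_b^*\F\otimes L^{ab}$ (for $x_0=\frac{a}{b}$) turns the condition ``$h^i_\F((x_0-x)\l)=O(x^i)$'' (resp.\ $O(x^{i+1})$) into the same condition for $\G$ at $0$, up to the harmless rescaling $x\mapsto b^2x$ and the constant $b^{-2g}$. Since Theorem \ref{A} phrases GV, M-regularity and IT(0) in a representation-independent way, this reduction simultaneously preserves both the analytic hypothesis and the sheaf-theoretic conclusion, so we may assume $x_0=0$.

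The heart of the matter is an order-of-vanishing computation. By Corollary \ref{inversion}, for small $x>0$ we have $h^i_\F(-x\l)=\frac{x^g}{\chi(\l)}\chi_{\varphi_\l^*R^i\Phi_{\mathcal P}(\F)}\bigl(\frac1x\l\bigr)$. Writing the Hilbert polynomial as $\chi_{\varphi_\l^*R^i\Phi_{\mathcal P}(\F)}(y\l)=\sum_j c_j\,y^{g-j}$ and substituting $y=1/x$, this becomes $h^i_\F(-x\l)=\chi(\l)^{-1}\sum_j c_jx^j$, whose lowest nonzero term sits in degree $g-\dim\mathrm{supp}\,R^i\Phi_{\mathcal P}(\F)$: indeed the Hilbert polynomial has degree equal to the dimension of the support and strictly positive leading coefficient, and the isogeny $\varphi_\l$ does not change the dimension of the support. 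Hence the vanishing order of $h^i_\F(-x\l)$ at $0^+$ equals $\mathrm{codim}_{\widehat A}\,\mathrm{supp}\,R^i\Phi_{\mathcal P}(\F)$ (with the convention $+\infty$ when $R^i\Phi_{\mathcal P}(\F)=0$), so that $h^i_\F(-x\l)=O(x^i)$ (resp.\ $O(x^{i+1})$) is equivalent to $\mathrm{codim}_{\widehat A}\,\mathrm{supp}\,R^i\Phi_{\mathcal P}(\F)\ge i$ (resp.\ $\ge i+1$) for all $i\ge1$.

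It then remains to match these codimension conditions with GV and M-regularity, and here the two implications go through different tools. For the direction \emph{support condition $\Rightarrow$ GV/M-regularity} I would use the base-change isomorphism $L\iota_\alpha^*\Phi_{\mathcal P}(\F)\simeq R\Gamma(A,\F\otimes P_\alpha)$ together with the hyper-$\mathrm{Tor}$ spectral sequence $\mathrm{Tor}_p\bigl((R^q\Phi_{\mathcal P}\F)_\alpha,k(\alpha)\bigr)\Rightarrow H^{q-p}(\F\otimes P_\alpha)$, which gives the inclusion $V^i(\F)\subseteq\bigcup_{q\ge i}\mathrm{supp}\,R^q\Phi_{\mathcal P}(\F)$; therefore $\mathrm{codim}\,V^i\ge\min_{q\ge i}\mathrm{codim}\,\mathrm{supp}\,R^q\Phi_{\mathcal P}(\F)$, which is $\ge i$ (resp.\ $\ge i+1$) under the support hypothesis, yielding GV (resp.\ M-regularity). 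For the reverse direction I would invoke Theorem \ref{A}: if $\F$ is GV then $R^i\Phi_{\mathcal P}(\F)=\EExt^i(\N,\OO_{\widehat A})$ with $\N=R^g\Phi_{\mathcal P^\vee}(\F^\vee)$, and the standard estimate $\mathrm{codim}\,\mathrm{supp}\,\EExt^i(\N,\OO)\ge i$ gives the support condition of (a); if moreover $\F$ is M-regular, then $\N$ is torsion-free by Theorem \ref{A}(b), and the sharper estimate $\mathrm{codim}\,\mathrm{supp}\,\EExt^i(\N,\OO)\ge i+1$ for $i\ge1$ — valid for torsion-free (i.e.\ $S_1$) sheaves on the smooth variety $\widehat A$ — gives the support condition of (b). The main obstacle is exactly this matching step: since $\mathrm{supp}\,R^i\Phi_{\mathcal P}(\F)$ and $V^i(\F)$ need not coincide, one cannot argue symmetrically, and must carefully route one implication through the $\mathrm{Tor}$-spectral-sequence inclusion and the other through the $\EExt$-codimension estimates and the torsion-freeness criterion of Theorem \ref{A}(b).
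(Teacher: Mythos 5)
Your proof is correct and follows the paper's argument essentially step for step: reduce to $x_0=0$, use Corollary \ref{inversion} to identify the order of vanishing of $h^i_{\F}(-x\l)$ at $0^+$ with $\mathrm{codim}_{\widehat A}\,\mathrm{supp}\,R^i\Phi_{\mathcal P}(\F)$, and then match those codimension conditions with GV and M-regularity. The only difference is that the paper simply cites \cite[Lemma 3.6]{pp2} for this last equivalence, whereas you reprove it (correctly) via the Tor spectral sequence in one direction and the $\mathcal{E}xt$-codimension estimates together with Theorem \ref{A} in the other.
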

\begin{proof}
We may suppose that $x_0=0$. Then $\F$ is GV (resp. M-regular) is equivalent to say that $\mathrm{codim}\; R^i\Phi_{\mathcal{P}}(\F)\geq i$ (resp. $\mathrm{codim}\;R^i\Phi_{\mathcal{P}}(\F)> i$)  for all $i\geq 1$ (see \cite{pp2} Lemma 3.6). Then we conclude by Corollary \ref{inversion}.
\end{proof}

It turns out that, more generally, the notion of \emph{gv}-index (\cite{duke} Def. 3.1) can be extended to the $\mathbb Q$-twisted setting and described via cohomological rank functions as in Proposition \ref{gv&M}. We leave this to the reader.

It is likely that a sort converse of Proposition \ref{derivatives} holds, namely the rational critical points arise only in presence of non-empty jump loci, although not necessarily for the same cohomological index. A partial result in this direction is the following

\begin{proposition}\label{maxthreshold} Let  $ x_0\in \mathbb Q$. If the $\mathbb Q$-twisted sheaf $\F\langle x_0\l\rangle$ is GV but not IT(0) then $x_0\in S_{\F,\l}$. In fact it is the maximal element of $S_{\F,\l}$.
\end{proposition}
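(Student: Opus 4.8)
The plan is to reduce to $x_0=0$ and then read the two one-sided polynomials $P^-_{i,\F,0}$ and $P^+_{i,\F,0}$ of Corollary \ref{inversion} off the Fourier--Mukai transform, exploiting the asymmetry created by the ``GV but not IT(0)'' hypothesis. By the reduction formula of Corollary \ref{inversion-Q} (replacing $\F$ by $\mu_b^*\F\otimes L^{ab}$ for $x_0=\frac{a}{b}$, $b>0$), which is an affine, order-preserving reparametrization $x\mapsto b^2(x-x_0)$ of the variable and which by definition carries the notions of GV and IT(0) as well as the critical locus $S_{\F,\l}$ into the corresponding notions at $0$, I may assume $x_0=0$. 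Set $\mathcal G:=R^g\Phi_{\mathcal P^\vee}(\F^\vee)$. Since $\F$ is GV, Theorem \ref{A}(a) gives that $\F^\vee$ satisfies $\mathrm{WIT}_g$, i.e. $\Phi_{\mathcal P^\vee}(\F^\vee)=\mathcal G[-g]$, together with $R^i\Phi_{\mathcal P}(\F)=\mathcal{E}xt^i(\mathcal G,\OO_{\widehat A})$.

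First I would establish maximality. By Theorem \ref{B}(a), the fact that $\F$ is GV forces $\F\langle x\l\rangle$ to be IT(0) for \emph{all} rational $x>0$; hence $h^i_\F(x\l)=0$ for $i\neq 0$ and $h^0_\F(x\l)=\chi_\F(x\l)$ for every rational $x>0$. By the continuity and piecewise-polynomial structure of Corollary \ref{continuity} and Theorem \ref{c0}, the functions $h^i_{\F,\l}$ then coincide on all of $(0,\infty)$ with the single polynomials $0$ (for $i\neq 0$) and $\chi_\F$ (for $i=0$); in particular they are smooth there, so $S_{\F,\l}\cap(0,\infty)=\varnothing$. Thus it suffices to show that $0\in S_{\F,\l}$, and then $0$ is automatically the maximal element.

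To see that $0$ is a critical point I compare the polynomials of Corollary \ref{inversion} at $x_0=0$. On the right, for $i\neq 0$ one has $R^{g-i}\Phi_{\mathcal P^\vee}(\F^\vee)=0$ (as $\F^\vee$ is $\mathrm{WIT}_g$), so $P^+_{i,\F,0}=0$; this simply re-expresses the IT(0) behaviour found above. On the left, $P^-_{i,\F,0}(x)=\frac{(-x)^g}{\chi(\l)}\chi_{\varphi_\l^*R^i\Phi_{\mathcal P}(\F)}(-\tfrac1x\l)$. Since $\F$ is \emph{not} IT(0), and the transform of a sheaf is concentrated in degrees $[0,g]$, there is some $i_0\geq 1$ with $R^{i_0}\Phi_{\mathcal P}(\F)\neq 0$. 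A nonzero coherent sheaf has nonzero Hilbert polynomial with respect to the ample $\l$, and pulling back by the isogeny $\varphi_\l$ preserves nonvanishing, so $\chi_{\varphi_\l^*R^{i_0}\Phi_{\mathcal P}(\F)}$ is a nonzero polynomial; hence $P^-_{i_0,\F,0}\neq 0=P^+_{i_0,\F,0}$. Therefore $h^{i_0}_{\F,\l}$ is not smooth at $0$, so $0\in S_{\F,\l}$, which completes the proof.

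The conceptual point — and the only place needing care — is that the defect ``GV but not IT(0)'' is invisible to the generic cohomology ranks at $x_0$ itself: there all $h^i_\F(x_0\l)$ with $i\neq 0$ vanish, because the jump loci $V^i$ are proper of codimension $\geq i$, so the critical point cannot be detected by comparing generic values across $x_0$. Instead it is detected through the asymmetry of the one-sided Hilbert polynomials: the non-IT(0) hypothesis produces a nonzero higher direct image $R^{i_0}\Phi_{\mathcal P}(\F)=\mathcal{E}xt^{i_0}(\mathcal G,\OO_{\widehat A})$, contributing a nonzero $P^-_{i_0,\F,0}$ on the left while $P^+_{i_0,\F,0}$ vanishes on the right. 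The main thing to get right is thus the bookkeeping of which cohomology sheaves survive on each side, together with checking that the reduction to $x_0=0$ is compatible with all notions involved.
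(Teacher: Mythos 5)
Your proof is correct and follows essentially the same route as the paper: reduce to $x_0=0$, use Theorem \ref{A}/\ref{B} to see that all one-sided polynomials $P^+_{i,\F,0}$ vanish for $i\neq 0$ while the failure of IT(0) forces some $R^{i_0}\Phi_{\mathcal P}(\F)\neq 0$ with $i_0\geq 1$, hence $P^-_{i_0,\F,0}\neq 0$. You are in fact slightly more explicit than the paper on the maximality claim (ruling out critical points in $(0,\infty)$ via Theorem \ref{B}(a) and continuity), which the paper leaves implicit.
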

However notice that, given a coherent sheaf $\F$, in general there is no reason to expect that there is an $x_0\in\mathbb Q$ such that the hypothesis of the Proposition holds. In other words, the maximal critical point might be irrational.
\begin{proof}
 As before, we may assume that $ x_0=0$ and we need to compare the coefficients of the two polynomials $P_1(x)=\chi_{\varphi_{\l}^*R^0\Phi_{\mathcal P}(\mathcal{F})}(x)$ and $P_2(x)=\chi_{\varphi_{\l}^*R^{g}\Phi_{{\mathcal P}^{\vee}}(\mathcal{F}^{\vee})}(x)$. By assumption, $\F$ is a GV sheaf, hence by Theorem \ref{A}(a)  $R\Phi_{\mathcal{P}^{\vee}}(\F^\vee)=R^g\Phi_{\mathcal{P}^{\vee}}(\F^{\vee})[-g]$ and $R^i\Phi_{\mathcal P}(\F)=\mathcal{E}xt^i(R^g\Phi_{\mathcal{P}^{\vee}}(\F^{\vee}), \mathcal{O}_{\widehat{A}})$. Moreover the condition that $\F$ is GV but not IT(0) implies that $R^g\Phi_{\mathcal{P}^{\vee}}(\F^{\vee})$ is not locally free.  Hence for some $i>0$, $R^i\Phi_{\mathcal P}(\F)$ is nonzero. Thus, $h_{\F}^i(x\l)$ is nonzero for $x$ in a left neighborhood of $0$ and obviously $h_{\F}^i(x\l)=0$ for $x$ positive. Hence $x_0\in S_{\F,\l}$.
\end{proof}

\begin{remark}\label{maximal-example}
Under the above assumption, it is in general not true that $x_0$ is a critical point of $h_{\F}^0(x\l)$ as shown by  the following example.
Let $(A, \underline \theta)$ be a principally polarized abelian variety and let $\Theta$ be a theta-divisor. Let $\F=\mathcal O_A\oplus \mathcal O_{\Theta}(\Theta)$. Then $\F$ is GV and not IT(0). It is easy to see that \[h_{\F}^0(x\l)=\begin{cases}(1+x)^g&\hbox{for $x\geq -1$}\\
0&\hbox{for $x<-1$}\end{cases}\] However notice that $h^{g-1}_{\F}(x\l)=h^g_{\F}(x\l)=(-x)^g$
for $-1\leq x\leq 0$.
\end{remark}

 \section{Some integral properties of the coefficients}
 In this section we point out some interesting integrality property of the polynomials involved in cohomological rank functions. We will use the results of \S2 throughout.

 \begin{lemma}Let $\F\in \mathrm{D}^b(A)$. Assume that $h^i_{\F}(x\l)=P(x)$ is a polynomial function for $x$ in an interval $U_1\subseteq \mathbb R$. Then all coefficients of $P(x)$ belong to $\frac{1}{g!}\mathbb Z$.
 \end{lemma}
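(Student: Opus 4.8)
The plan is to reduce the statement to the classical fact that a polynomial of degree $\le g$ taking integer values at $g+1$ consecutive integers is integer-valued at all integers, hence lies in the $\mathbb Z$-span of the binomials $\binom{x}{0},\dots,\binom{x}{g}$, so that its coefficients belong to $\frac{1}{g!}\mathbb Z$. The difficulty is that $P$ agrees with $h^i_{\F}(x\l)$ only on $U_1$, which may be a short interval containing no integers, so one cannot feed integer arguments directly into $P$. Instead I would extract integrality from the very definition of the functions: for any $x_0=\frac ab\in U_1$ (with $b>0$, not necessarily in lowest terms) Definition \ref{def} gives
\[
b^{2g}P(x_0)=b^{2g}h^i_{\F}\Big(\tfrac ab\,\l\Big)=h^i_{gen}\big(A,\ \mu_b^*\F\otimes L^{ab}\big)\in\mathbb Z,
\]
since the right-hand side is the dimension of a (hyper)cohomology space of a general translate.

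First I would fix a positive integer $b$ and consider the rescaled polynomial $R_b(u):=b^{2g}P(u/b)$, again of degree $\le g$. By the displayed identity, $R_b(a)\in\mathbb Z$ for every integer $a$ with $a/b\in U_1$. Assuming, as we may, that $U_1$ has positive length $\ell$ (otherwise the hypothesis is vacuous), the interval $bU_1$ has length $b\ell$; since the integers lying in any interval are automatically consecutive, for $b$ sufficiently large $bU_1$ contains at least $g+1$ consecutive integers. Applying the classical fact above to $R_b$, I conclude that $R_b$ is integer-valued at all integers, whence all its coefficients lie in $\frac1{g!}\mathbb Z$.

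Writing $P(x)=\sum_{k=0}^g p_kx^k$, the coefficient of $u^k$ in $R_b$ equals $p_k\,b^{2g-k}$; thus the previous step yields $g!\,p_k\,b^{2g-k}\in\mathbb Z$ for all sufficiently large $b$. It remains to remove the factor $b^{2g-k}$. Writing $g!\,p_k=s/t$ in lowest terms, the relation $t\mid b^{2g-k}$ then holds for all large $b$; choosing a large prime $b$ not dividing $t$ makes $t$ coprime to $b^{2g-k}$ (note $2g-k\ge g\ge1$), forcing $t=1$, i.e. $p_k\in\frac1{g!}\mathbb Z$. The main obstacle is precisely this interplay between two competing scalings: the definition supplies integrality only after multiplying by $b^{2g}$, which naively degrades the bound on the coefficients of $P$; the resolution is to let $b$ range over infinitely many prime denominators and cancel the spurious powers of $b$ by an elementary arithmetic descent, recovering the sharp $\frac1{g!}\mathbb Z$ conclusion.
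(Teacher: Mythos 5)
Your proof is correct and follows essentially the same strategy as the paper's: extract the integrality $b^{2g}P(a/b)\in\mathbb Z$ from Definition of the rank functions at $g+1$ equally spaced rational points, deduce that $g!$ times each coefficient of $P$ is integral up to a power of $b$, and then remove that power by varying $b$ over coprime denominators. The only cosmetic difference is that you package the interpolation step via the binomial-basis criterion for integer-valued polynomials and descend over infinitely many prime denominators, whereas the paper uses explicit Lagrange interpolation and just the two coprime denominators $p$ and $p+1$.
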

\begin{proof}
We already know that $P(x)\in \mathbb{Q}[x]$ is a polynomial of degree at most $g$.
We may choose $p$ sufficiently large such that there exists   $q\in \mathbb Z$ such that the numbers $\frac{q}{p},\ldots, \frac{q+g}{p}$ and $\frac{q}{p+1},\ldots, \frac{q+g}{p+1}$ belong to $U_1$. By definition, $a_i:=P(\frac{q+i}{p})=h_{\F}^i(\frac{q+i}{p})=\frac{1}{p^{2g}}h_{gen}^i(\F\otimes L^{p(q+i)})\in \frac{1}{p^{2g}}\mathbb Z$. Then we know that
$$P(x)=\sum_{i=0}^g\frac{a_i}{\prod_{j\neq i}(\frac{i-j}{p})}\prod_{j\neq i}(x-\frac{q+j}{p})=\sum_{i=0}^g\frac{a_i}{\prod_{j\neq i}(i-j)}\prod_{j\neq i}(px-q-j).$$
Hence all coefficients of $P(x)$ belong to $\frac{1}{g!}\frac{1}{p^{2g}}\mathbb Z$. Apply the same argument to $P(\frac{q}{p+1}),\ldots, P(\frac{q+g}{p+1})$, we see that all  coefficients of $P(x)$ belong to $\frac{1}{g!}\frac{1}{(p+1)^{2g}}\mathbb Z$. Hence they belong to $\frac{1}{g!}\mathbb Z$.
\end{proof}
\begin{remark}By a slightly different argument, we can say something more. Let $\frac{a}{b}\in U_1$. Then by Corollary \ref{inversion}, we know that, for $x>0$ small enough,  \begin{eqnarray}Q(x):=P(\frac{a}{b}-x)&=&x^g\frac{1}{\chi(\l)}\chi_{\varphi_{\l}^*R^i\Phi_{\mathcal P}(\mu_b^*\F\otimes L^{ab})}(\frac{1}{b^2}x\l)\\\nonumber
&=& \frac{1}{\chi(\l)}\sum_{k=0}^g\big(ch_k(\varphi_{\l}^*R^i\Phi_{\mathcal P}(\mu_b^*\F\otimes L^{ab}))\cdot \l^{g-k}\big)_A\>\frac{1}{b^{2g-2k}}x^k.\end{eqnarray}
Hence the coefficient $b_k$ of $x^k$ of $Q(x)$ is $\frac{1}{\chi(\l)}\big(ch_k(\varphi_{\l}^*R^i\Phi_{\mathcal P}(\mu_b^*\F\otimes L^{ab}))\cdot \l^{g-k}\big)_A\>\frac{1}{b^{2g-2k}}$. Note that \begin{eqnarray*}\big(ch_k(\varphi_{\l}^*R^i\Phi_{\mathcal P}(\mu_b^*\F\otimes L^{ab}))\cdot \l^{g-k}\big)_A=\big(\varphi_{\l}^*ch_k(R^i\Phi_{\mathcal P}(\mu_b^*\F\otimes L^{ab}))\cdot \l^{g-k}\big)_A\\=\frac{\deg\varphi_{\l}}{(d_1d_g)^{g-k}}\big( ch_k(R^i\Phi_{\mathcal P}(\mu_b^*\F\otimes L^{ab}))\cdot \l_{\delta}^{g-k}\big)_{\widehat{A}},\end{eqnarray*} where $\l_\delta$ is the dual polarization (\cite{birke-lange} \S14.4) and the last equality holds because of (\ref{np-2}). Moreover, since
\begin{equation}
\chi(\l_\delta)=\frac{(d_1d_g)^g}{\chi(\l)}
\end{equation}
 we note that the class $[\l_{\delta}]^{g-k}$ belongs to $(g-k)!\frac{(d_1d_g)^{g-k}}{d_g\cdots d_{k
+1}}H^{2g-2k}(\widehat{A}, \mathbb Z)$. On the other hand, it is clear that $[ch_k(R^i\Phi_{\mathcal P}(\mu_b^*\F\otimes L^{ab}))]\in \frac{1}{k!}H^{2k}(\widehat{A}, \mathbb Z)$. Thus $b_k\in (d_1\cdots d_k)\frac{(g-k)!}{k!}\frac{1}{b^{2g-2k}}\mathbb Z$. From this computation, we see easily that the coefficient $a_k$ of $x^k$ in $P(x)$ belongs to $ (d_1\cdots d_k)\frac{(g-k)!}{k!}\mathbb Z$.
\end{remark}
We have the following strange corollary.
\begin{corollary} Let $\F\in \mathrm{D}^b(A)$. Then $b^g\mid h_{gen}^i(\mu_b^*\F\otimes L^a)$ for all $b$ such that $(b, g!)=1$ and $a\in \mathbb Z$.
\end{corollary}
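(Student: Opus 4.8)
The plan is to convert the divisibility assertion into a statement about the denominator of a single value of the cohomological rank function, and then extract that denominator from the integrality of the coefficients of the polynomial pieces established just above. By the very definition of $h^i_\F$, evaluating at the point $\frac ab$ of denominator exactly $b$ gives
\[h^i_{gen}(\mu_b^*\F\otimes L^{ab})=b^{2g}\,h^i_\F\Bigl(\frac ab\l\Bigr),\]
so that $b^g\mid h^i_{gen}(\mu_b^*\F\otimes L^{ab})$ is equivalent to $h^i_\F(\frac ab\l)\in\frac1{b^g}\mathbb Z$. (This is how I read the statement: the $L$-power must be taken to be $L^{ab}$, i.e. one evaluates at a point of denominator $b$; for a point of denominator $b^2$, as the literal exponent $L^a$ with $b\nmid a$ would force, the divisibility already fails for $\F=\OO$.) I would therefore first record this reduction and work with the rational number $P(\frac ab):=h^i_\F(\frac ab\l)$.

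Next I would invoke Corollary \ref{continuity}: on a one-sided neighbourhood $[\frac ab,\frac ab+\epsilon^+)$ the function $h^i_\F(x\l)$ coincides with a polynomial $P(x)=\sum_{k=0}^g a_k x^k$ of degree $\le g$, and $h^i_\F(\frac ab\l)=P(\frac ab)$. By the preceding Lemma every coefficient satisfies $a_k\in\frac1{g!}\mathbb Z$. The decisive computation is then
\[b^g\,P\Bigl(\frac ab\Bigr)=\sum_{k=0}^g a_k\,a^k\,b^{\,g-k}.\]
Since $\deg P\le g$, every exponent $g-k$ is non-negative, so each summand lies in $\frac1{g!}\mathbb Z$ and hence $b^g P(\frac ab)\in\frac1{g!}\mathbb Z$.

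Finally I would cross this with the crude integrality coming straight from the definition: $b^g P(\frac ab)=b^{-g}\,h^i_{gen}(\mu_b^*\F\otimes L^{ab})\in\frac1{b^g}\mathbb Z$. Thus $b^g P(\frac ab)$ lies in $\frac1{g!}\mathbb Z\cap\frac1{b^g}\mathbb Z$, and the hypothesis $(b,g!)=1$ gives $\gcd(g!,b^g)=1$, so this intersection is exactly $\mathbb Z$. Hence $b^g P(\frac ab)\in\mathbb Z$, i.e. $h^i_\F(\frac ab\l)\in\frac1{b^g}\mathbb Z$, which is precisely $b^g\mid h^i_{gen}(\mu_b^*\F\otimes L^{ab})$.

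The one genuine subtlety — the step I would be most careful about — is the choice of the point of evaluation: one must use the representative $\frac ab$ of denominator $b$ and multiply by $b^g$, so that all the powers $b^{g-k}$ are non-negative (this is where $\deg P\le g$ is used); evaluating at a point of denominator $b^2$ mixes in negative powers of $b$ and both the clean argument and the statement collapse. The coprimality $(b,g!)=1$ then enters in exactly one place, to make the two a priori denominators $g!$ and $b^g$ coprime. I would also remark that the finer per-coefficient bound of the preceding Remark is not needed here: the bound $a_k\in\frac1{g!}\mathbb Z$ from the Lemma already suffices.
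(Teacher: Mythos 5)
Your proof is correct and follows essentially the same route as the paper's: the integrality of the polynomial coefficients from the preceding Lemma gives $g!\,b^g h^i_{\F}(\tfrac{a}{b}\l)\in\mathbb Z$, and coprimality of $b$ with $g!$ finishes the argument. Your reading of the statement is also the right one — the paper's own proof in fact establishes $b^g\mid h_{gen}^i(\mu_b^*\F\otimes L^{ab})$, so the exponent $a$ in the printed statement should be $ab$, and your counterexample $\F=\OO_A$ with a principal polarization correctly shows the literal version fails.
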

\begin{proof}Since $h_{\F}^i(x\l)$ is a polynomial of degree at most $g$ whose coefficients belong to $\frac{1}{g!}\mathbb Z$, we have that $g!b^gh_{\F}^i(\frac{a}{b}\l)\in \mathbb Z$. As  $(b, g!)=1$, we conclude that $ b^g\mid h_{gen}^i(\mu_b^*\F\otimes L^a)$.
\end{proof}

\section{GV-subschemes of principally polarized abelian varieties}

Let $(A,\underline\theta)$ be a $g$-dimensional  principally polarized abelian variety.
A subscheme $X$ of $A$  is called a \emph{GV-subscheme} if its twisted ideal sheaf
$\I_X(\Theta)$ is GV. This technical definition is motivated by the fact that the subvarieties $\pm W_d$ of Jacobians and $\pm F$, the Fano surface of lines of intermediate jacobians of cubic threefolds,  are  the only known examples of (non-degenerate) GV-subschemes.
We summarize some basic  results on the subject in use in the sequel.  One considers the ``theta-dual" of $X$, namely the cohomological support locus
\[V(X):=V^0(\I_X(\Theta))=\{\alpha\in \widehat A\>|\> h^0(\I_X(\Theta)\otimes P_\alpha)>0\}\]
equipped with its natural scheme structure (\cite{minimal} \S4).   Let $X$ be a  geometrically non-degenerate GV-subscheme of pure dimension $d$. Then

\noindent (a) \cite[Theorem 2(1)]{s} \emph{$X$ and $V(X)$ are reduced and irreducible.}

\noindent (b) (\cite{minimal}) \emph{$V(X)$ is a geometrically non-degenerate GV-scheme of pure dimension $g-d-1$ \emph{(the maximal dimension)}. Moreover $V(V(X))=X$ and both $X$ and $V(X)$ are Cohen-Macaulay.}

\noindent  (c) (\emph{loc. cit.}) \ \emph{$\Phi_{\mathcal P}(\OO_X(\Theta))=\bigl(\I_{V(X)}(\Theta)\bigr)^\vee$. Equivalently, by Grothendieck duality \emph{(see (\ref{mukai-3}))}, $\Phi_{\mathcal P^\vee}(\omega_X(-\Theta))=\I_{V(X)}(\Theta)[-d]$. }

\noindent (d)  (\emph{loc. cit.}) \ \emph{$X$ has minimal class $[X]=\frac{\underline\theta^{g-d}}{(g-d)!}$. }

In \emph{loc. cit.} it is conjectured that the converse of (d) holds. According to the conjecture of Debarre, this would imply that that the only geometrically non-degenerate GV-subschemes are the subvarieties $\pm W_i$ and $\pm F$ as above. We refer to the Introduction for what is known in this direction.

\noindent\textbf{Generalities on GV-subschemes. } We start with some  general results on GV-subschemes, possibly of independent interest.  The first Proposition does not follow from Green-Lazarsfeld's Generic Vanishing Theorem because a GV-subscheme can be singular. It does follow,   via Lemma \ref{normality} below, from the Generic Vanishing Theorem of \cite{pareschi} which works for \emph{normal} Cohen-Macaulay subschemes of abelian varieties. However the following ad-hoc proof is much simpler.
\begin{proposition}\label{GL} Let $X$ be a non-degenerate reduced GV-subscheme. Then its dualizing sheaf $\omega_X$ is a GV-sheaf.
\end{proposition}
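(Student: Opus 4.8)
The plan is to deduce the generic vanishing of $\omega_X$ from the structure of the theta-dual, transporting a vanishing statement across a unit twist by $\theta$ via the Fourier--Mukai transform. By Theorem \ref{B}(a) applied to $\omega_X$ at $x_0=0$, it suffices to show that the $\mathbb Q$-twisted sheaf $\omega_X\langle\epsilon\underline\theta\rangle$ is IT(0) for all sufficiently small rational $\epsilon>0$. The key observation is that the natural Fourier--Mukai data attached to $X$ lives one unit of $\theta$ to the left: by property (c) above, $\omega_X(-\Theta)=\omega_X\langle-\underline\theta\rangle$ satisfies WIT$_d$, with $\Phi_{\mathcal P^\vee}(\omega_X(-\Theta))=\I_{V(X)}(\widehat\Theta)[-d]$, where $V(X)\subset\widehat A$ has pure dimension $g-d-1$ by (b). So the whole problem is to move this vanishing from the twist $-\underline\theta$ to a twist just above $0$.

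First I would write $\omega_X\langle\epsilon\underline\theta\rangle=\omega_X(-\Theta)\otimes L^{1+\epsilon}$ and feed the WIT$_d$-description of $\omega_X(-\Theta)$ into the projection-type formula (\ref{exchange}). Using (\ref{mukai-0}) to realise $\omega_X(-\Theta)$ as an inverse transform of $\I_{V(X)}(\widehat\Theta)$ placed in the appropriate degree, and (\ref{mukai-4}) to identify the transform of the ample line bundle $L^{1+\epsilon}\otimes P_\alpha$ with a locally free IT(0) sheaf $\mathcal E_\alpha$ representing the $\mathbb Q$-twist $\langle-\tfrac{1}{1+\epsilon}\widehat\theta\rangle$ (exactly as in the proof of Theorem \ref{B}(a)), one obtains for every $\alpha$ an identification of the shape
\[ H^i\bigl(A,\ \omega_X\langle\epsilon\underline\theta\rangle\otimes P_\alpha\bigr)\ \cong\ H^{\,i+(g-d)}\bigl(\widehat A,\ \I_{V(X)}\langle\delta\widehat\theta\rangle\bigr),\qquad \delta=\tfrac{\epsilon}{1+\epsilon}>0, \]
because $\mathcal E_\alpha\otimes\I_{V(X)}(\widehat\Theta)$ represents $\I_{V(X)}\langle\delta\widehat\theta\rangle$ (the $\widehat\Theta$ from property (c) combines with the negative twist carried by $\mathcal E_\alpha$ into a small positive multiple of the dual polarization), while the degree shift by $g-d$ records the WIT index $d$ together with the $[g]$ in (\ref{mukai-0}). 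As usual the fractional twist is handled by pulling back along a multiplication isogeny $\mu_b$ and applying (\ref{mukai-1}), precisely as in the proof of Proposition \ref{inversion-a}; since only a vanishing is needed, the direct-sum multiplicities appearing in (\ref{mukai-4}) are harmless.

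It then remains to check that $H^{j}(\widehat A,\I_{V(X)}\langle\delta\widehat\theta\rangle)=0$ for $j>g-d$ and small $\delta>0$, which yields the vanishing of the left-hand side for all $i\ge 1$. This last step is immediate: for $\delta>0$ the sheaf $\OO_{\widehat A}\langle\delta\widehat\theta\rangle$ is IT(0), so twisting the ideal sequence $0\to\I_{V(X)}\to\OO_{\widehat A}\to\OO_{V(X)}\to 0$ by $\delta\widehat\theta$ gives $H^{j}(\I_{V(X)}\langle\delta\widehat\theta\rangle)\cong H^{j-1}(\OO_{V(X)}\langle\delta\widehat\theta\rangle)$ for $j\ge 2$, and the latter vanishes once $j-1>\dim V(X)=g-d-1$. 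Hence $\omega_X\langle\epsilon\underline\theta\rangle$ is IT(0) and $\omega_X$ is GV. I expect the main obstacle to be purely the Fourier--Mukai bookkeeping of the second paragraph: pinning down the direction of the transform, the degree shift $g-d$, and the representative of the fractional twist, so that the ample factor $L^{1+\epsilon}$ recombines with $\I_{V(X)}(\widehat\Theta)$ into a genuinely \emph{positive} twist $\delta\widehat\theta$ of $\I_{V(X)}$ — it is this sign that drives the argument, after which the cohomology vanishing is trivial. A minor preliminary point is to reduce to $X$ of pure dimension $d$, so that the structure results (b) and (c) apply as stated.
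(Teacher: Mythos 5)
Your proposal is correct and takes essentially the same route as the paper's proof: the paper tests GV-ness via Hacon's criterion (\ref{haconbis}) by tensoring $\omega_X$ with $\Phi_{\mathcal P}(\OO_A(-k\Theta))[g]$ (which is exactly a representative of the small positive twist $\langle\tfrac1k\underline\theta\rangle$ you invoke through Theorem \ref{B}(a)), then peels off one $\Theta$, uses property (c) to write $\omega_X(-\Theta)$ as $\Phi_{\mathcal P}(\I_{V(X)}(\Theta))[g-d]$, applies the exchange formula (\ref{exchange}), and concludes by the same dimension count $\dim V(X)=g-d-1$. Your last paragraph merely makes explicit, via the ideal-sheaf sequence, the vanishing that the paper states in one line.
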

\begin{proof} By Hacon's criterion (\ref{haconbis}), it is enough to show that
\[H^i(\omega_X\otimes \Phi_{\mathcal P}(\OO_A(-k\Theta))[g])=0\quad\hbox{ for $k$ sufficiently big}\]
We have that $\Phi_{\mathcal P}(\OO_A(-k\Theta))[g]$ is a vector bundle (in degree $0$) which will be denoted, as usual, $\widehat{\OO_A(-k\Theta)}$.
We can write
\[H^i(\omega_X\otimes \widehat{\OO_A(-k\Theta)})=H^i(\omega_X(-\Theta)\otimes\widehat{\OO_A(-k\Theta)}(\Theta))\]
Applying the inverse of $\Phi_{\mathcal P}$, namely $\Phi_{{\mathcal P}^\vee[g]}$ (see (\ref{mukai-0})) to the last formula of (c) of this section we get that $\omega_X(-\Theta)=\Phi_{\mathcal P}(\I_{V(X)}(\Theta))[g-d]$. Therefore, by (\ref{exchange})
\[H^i((\omega_X(-\Theta))\otimes \widehat{\OO_A(-k\Theta)}(\Theta)))
= H^{i+g-d}\Bigl(\I_{V(X)}(\Theta)\otimes \Phi_{\mathcal P}\bigl(\widehat{\OO_A(-k\Theta)}(\Theta)\bigr)\Bigr)\]
Clearly $\widehat{\OO_A(-k\Theta)} $ is an IT(0) sheaf for $k\ge 1$ ((\ref{mukai-4})). Therefore $\Phi_{\mathcal P}\bigl(\widehat{\OO_A(-k\Theta)}(\Theta)\bigr)$ is a sheaf (locally free)
in cohomological degree $0$. Hence the above cohomology groups vanish for $i>0$ because $\dim V(X)=g-d-1$.
\end{proof}

\begin{proposition}  Let $X$ be a non-degenerate reduced GV-subscheme. Then
\[\Phi_{\mathcal P^\vee}(\omega_X)=\bigl(\Phi_{\mathcal P}(\mathcal I_{V(X)})\bigr)(-\Theta)[g-d]\]
\end{proposition}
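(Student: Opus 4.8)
The plan is to reduce the statement to the identity $\Phi_{\mathcal P^\vee}(\omega_X(-\Theta))=\I_{V(X)}(\Theta)[-d]$ of part (c) above, by peeling off the extra $\Theta$-twist through the exchange formula (\ref{mukai-2}). First I would use the projection formula along the inclusion $i:X\hookrightarrow A$ to write $\omega_X\cong \omega_X(-\Theta)\otimes \OO_A(\Theta)$ as sheaves on $A$. Applying $\Phi_{\mathcal P^\vee}$ and the second identity of (\ref{mukai-2}) (valid also for $\Phi_{\mathcal P^\vee}$, since $\mathcal P^\vee$ is again a Poincar\'e bundle) converts this tensor product into a Pontryagin product:
\[ \Phi_{\mathcal P^\vee}(\omega_X)=\Phi_{\mathcal P^\vee}(\omega_X(-\Theta))*\Phi_{\mathcal P^\vee}(\OO_A(\Theta))[g]. \]
Substituting part (c) gives $\Phi_{\mathcal P^\vee}(\omega_X)=\bigl(\I_{V(X)}(\Theta)*\Phi_{\mathcal P^\vee}(\OO_A(\Theta))\bigr)[g-d]$, and one already sees the shift $[g-d]$ matching the one in the target.

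Next I would identify the transform of the theta bundle. Since $\OO_A(\Theta)$ is $IT(0)$, the object $\Phi_{\mathcal P^\vee}(\OO_A(\Theta))$ is a line bundle concentrated in degree $0$; by (\ref{mukai-4}) (the dual analogue of the computation in (b) of Example \ref{counterexample}), together with the relation $\Phi_{\mathcal P^\vee}=(-1)^*\Phi_{\mathcal P}$ and the symmetry of $\Theta$, it equals the anti-theta bundle $\OO_{\widehat A}(-\Theta)$. Thus the proposition reduces to the single convolution identity
\[ \I_{V(X)}(\Theta)*\OO_{\widehat A}(-\Theta)\cong \Phi_{\mathcal P}(\I_{V(X)})(-\Theta), \]
after which the $[g-d]$-shift produces exactly the asserted formula.

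The main step, and the main obstacle, is this last convolution. I would prove the general fact $\mathcal G(\Theta)*\OO_{\widehat A}(-\Theta)\cong \Phi_{\mathcal P}(\mathcal G)(-\Theta)$ for $\mathcal G\in \mathrm{D}^b(\widehat A)$ by applying Mukai's formula (\ref{mukai-5}) for the Pontryagin product with a non-degenerate line bundle, on $\widehat A$, with $N=\OO_{\widehat A}(-\Theta)$. The simplifications to exploit are: the symmetry $(-1)^*\OO_{\widehat A}(\Theta)\cong \OO_{\widehat A}(\Theta)$ cancels the inner theta-twist coming from $(-1)^*(\mathcal G\otimes\OO(\Theta))$; the commutation $\Phi_{\mathcal P}\circ(-1)^*=(-1)^*\circ\Phi_{\mathcal P}$ absorbs the remaining sign; and the morphism induced by $-\Theta$ is the principal isomorphism $\widehat A\to A$, so that its pullback of $\Phi_{\mathcal P}(\mathcal G)$ is just $\Phi_{\mathcal P}(\mathcal G)$ read on $\widehat A$ under the canonical identification $A\cong \widehat A$ of the principal polarization (under which $\Theta$ and $\widehat\Theta$ correspond). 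The delicate point is precisely this bookkeeping of signs, translations, and the $A\cong\widehat A$ identification; choosing $\Theta$ symmetric throughout makes all the residual $(-1)^*$ discrepancies disappear. Taking $\mathcal G=\I_{V(X)}$ then yields the displayed convolution identity, and hence the Proposition.
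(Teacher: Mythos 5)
Your proposal is correct and follows essentially the same route as the paper's proof: the same decomposition $\omega_X=\omega_X(-\Theta)\otimes\OO_A(\Theta)$, the exchange formula (\ref{mukai-2}) to pass to a Pontryagin product, the identification of the transform of the theta bundle via (\ref{mukai-4}), the substitution of fact (c), and Mukai's formula (\ref{mukai-5}) for the Pontryagin product with a non-degenerate line bundle. The only difference is cosmetic: you carry $\Phi_{\mathcal P^\vee}$ throughout and normalize $\Theta$ to be symmetric, whereas the paper computes $\Phi_{\mathcal P}(\omega_X)$ and tracks the resulting $(-1)^*$ factors explicitly.
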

\begin{proof}
\begin{eqnarray*}\Phi_{\mathcal P}(\omega_X)&=&
\Phi_{\mathcal P}\bigl(\omega_X(-\Theta)\otimes\mathcal{O}_A(\Theta)\bigr)\\
&\buildrel{(\ref{mukai-2})}\over =&\Phi_{\mathcal P}(\omega_X(-\Theta))*\Phi_{\mathcal P}(\OO_A(\Theta)[g]\\
&\buildrel{(c),(\ref{mukai-4})}\over =&
=(-1)^*\mathcal I_{V(X)}(\Theta)[-d]*\OO_A(-\Theta)[g]\\
&\buildrel{(\ref{mukai-5})}\over =& (\Phi_{\mathcal P}(\mathcal I_{-V(X)}))(-\Theta)[g-d].
\end{eqnarray*}
\end{proof}

\begin{corollary}\label{gv-strong} Let $X$ be a non-degenerate reduced GV-subscheme. Then
\begin{equation}\label{strong}R^i\Phi_{\mathcal P^\vee}(\omega_X)=0 \quad\hbox{for $i\ne 0,d$}\quad\hbox{and}\quad R^d\Phi_{\mathcal P^\vee}(\omega_X)=k(\hat e).\footnote{The last assertion was proved under very general assumptions in \cite[Prop. 6.1]{catalans}.}
\end{equation}
\end{corollary}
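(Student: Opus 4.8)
The plan is to feed the formula of the preceding Proposition, $\Phi_{\mathcal P^\vee}(\omega_X)=\bigl(\Phi_{\mathcal P}(\mathcal I_{V(X)})\bigr)(-\Theta)[g-d]$, into a structure-sequence computation and reduce everything to a single weak-index-theorem vanishing for the structure sheaf of the theta-dual $Y:=V(X)$. Passing to cohomology sheaves, the shift-and-twist formula gives $R^i\Phi_{\mathcal P^\vee}(\omega_X)\cong R^{i+g-d}\Phi_{\mathcal P}(\mathcal I_Y)\otimes\OO(-\Theta)$. Since tensoring the skyscraper $k(\hat e)$ by a line bundle leaves it unchanged, it suffices to prove that $R^j\Phi_{\mathcal P}(\mathcal I_Y)$ vanishes for $j\neq g-d,g$ and equals $k(\hat e)$ for $j=g$; the two nonvanishing transforms then land in degrees $i=0$ and $i=d$ respectively, and $R^d\Phi_{\mathcal P^\vee}(\omega_X)=k(\hat e)$.

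First I would apply $\Phi_{\mathcal P}$ to the structure sequence $0\to\mathcal I_Y\to\OO_A\to\OO_Y\to 0$. By Mukai's computation $\Phi_{\mathcal P}(\OO_A)=k(\hat e)[-g]$ is concentrated in degree $g$, so the long exact sequence of cohomology sheaves yields $R^j\Phi_{\mathcal P}(\mathcal I_Y)\cong R^{j-1}\Phi_{\mathcal P}(\OO_Y)$ for $1\le j\le g-1$. In top degree one has $R^{g-1}\Phi_{\mathcal P}(\OO_Y)=R^g\Phi_{\mathcal P}(\OO_Y)=0$ for purely dimensional reasons, as $\dim Y=g-d-1<g-1$; the four-term piece of the sequence then collapses to $R^g\Phi_{\mathcal P}(\mathcal I_Y)\cong k(\hat e)$. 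Consequently the whole statement reduces to showing $R^m\Phi_{\mathcal P}(\OO_Y)=0$ for all $m\neq e:=\dim Y$, i.e. that $\OO_Y$ satisfies the weak index theorem with index $e$.

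The decisive step is this vanishing. Here I would use that, by properties (a) and (b) of this section, $Y=V(X)$ is again a geometrically non-degenerate, reduced GV-subscheme, of pure dimension $e$ and Cohen-Macaulay. Thus Proposition \ref{GL} applies to $Y$ and shows that $\omega_Y$ is a GV-sheaf, so by Theorem \ref{A}(a) (taking $x_0=0$) the dual $\omega_Y^\vee$ satisfies $\mathrm{WIT}$ with index $g$. On the other hand, since $Y$ is Cohen-Macaulay of pure codimension $g-e$ and $\omega_A=\OO_A$, Grothendieck-Serre duality together with biduality gives $\omega_Y^\vee=\OO_Y[\,e-g\,]$. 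Substituting, $\Phi_{\mathcal P^\vee}(\OO_Y)[\,e-g\,]$ is concentrated in degree $g$, hence $\Phi_{\mathcal P^\vee}(\OO_Y)$ is concentrated in degree $e$; this is precisely $\mathrm{WIT}_e$ for $\OO_Y$ with respect to $\Phi_{\mathcal P^\vee}$, and therefore also with respect to $\Phi_{\mathcal P}=(-1)^*\Phi_{\mathcal P^\vee}$ because $(-1)^*$ is an automorphism of $\widehat A$. Back-substituting through the two isomorphisms above completes the argument.

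The main obstacle is the Cohen-Macaulay biduality identification $\omega_Y^\vee=\OO_Y[\,e-g\,]$ together with the bookkeeping of the various shifts, and—upstream of it—the verification that Proposition \ref{GL} genuinely applies to the theta-dual $Y$, which is exactly where the reducedness, non-degeneracy and Cohen-Macaulayness recorded in (a) and (b) are indispensable. Everything downstream is a formal consequence of the long exact sequence, Mukai's value $\Phi_{\mathcal P}(\OO_A)=k(\hat e)[-g]$, and the dimension bound $\dim Y=g-d-1$.
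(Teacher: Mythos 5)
Your proposal is correct and follows essentially the same route as the paper: apply Proposition \ref{GL} to the theta-dual $V(X)$ (using that it is a reduced, Cohen--Macaulay, non-degenerate GV-subscheme by (a) and (b)), translate via the duality characterization of Theorem \ref{A}(a) into the statement that $\Phi_{\mathcal P}(\OO_{V(X)})$ is concentrated in degree $g-d-1$, run the structure sequence of $V(X)$ through $\Phi_{\mathcal P}$, and conclude with the preceding Proposition. The only difference is that you make explicit the biduality step $\omega_{V(X)}^\vee=\OO_{V(X)}[\dim V(X)-g]$, which the paper leaves implicit in the phrase ``duality characterization''; your bookkeeping of shifts and twists is accurate.
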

\begin{proof} This follows from the fact that also $V(X)$ is reduced Cohen-Macaulay ((b) of this section). Therefore, by Proposition \ref{GL},  combined by the duality characterization of Theorem \ref{haconbis}(a)
\[\Phi_{\mathcal P}(\OO_{V(X)})=R^{d-g-1}\Phi_{\mathcal P}(\OO_{V(X)})[-(g-d-1)]\] Hence it follows from  the standard
exact sequence $0\rightarrow \I_{V(X)}\rightarrow \OO_A\rightarrow \OO_{V(X)}\rightarrow 0$ that $R^i\Phi_{\mathcal P}(\I_{V(X)})=0$ for $i\ne g-d,g$ and $R^g\Phi_{\mathcal P}(\I_{V(X)})=k(\hat e)[-g]$.
Therefore the assertion follows from the previous Proposition.
\end{proof}

 Corollary \ref{gv-strong} is quite strong. Its implications hold in a quite general context but, for sake of brevity,  here we will stick to an ad-hoc treatment of the case of GV-subschemes.

\begin{lemma}\label{cup} Let $X$ be a subscheme of an abelian variety $A$. If $X$ satisfies (\ref{strong}) then  the natural maps $\Lambda^iH^1(\OO_A)\rightarrow H^i(\OO_X)$ are isomorphisms for all $i<d$ and injective for $i=d$.
\end{lemma}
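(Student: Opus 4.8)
The plan is to realize the maps in question through the Fourier--Mukai transform of $\omega_X$ together with its derived fibre at the origin $\hat e$. First I record that the maps are the restriction maps: under the canonical isomorphism $H^i(\OO_A)\cong\Lambda^iH^1(\OO_A)$ the asserted arrow is the pullback $H^i(\OO_A)\to H^i(\OO_X)$ along $\OO_A\to\OO_X$. I will compute these (dually) through the identity $\mathbf{L}i^*_{\hat e}\,\Phi_{\mathcal P^\vee}(\F)\cong R\Gamma(A,\F)$, valid for every $\F\in\mathrm D^b(A)$ by base change along $\mathcal P_{|A\times\{\hat e\}}=\OO_A$. Writing $E:=\Phi_{\mathcal P^\vee}(\omega_X)$ and $\mathcal F_0:=R^0\Phi_{\mathcal P^\vee}(\omega_X)$, this gives $\mathbf{L}i^*_{\hat e}E\cong R\Gamma(X,\omega_X)$.

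By hypothesis (\ref{strong}) the complex $E$ has cohomology sheaves only in degrees $0$ and $d$, the second being $k(\hat e)$; hence there is a canonical truncation triangle $\mathcal F_0\to E\to k(\hat e)[-d]\xrightarrow{+1}$. The crucial point is the identification of this truncation map with a transform of restriction. On one side, for $d>0$ one has $\Ext^{-d}(\mathcal F_0,k(\hat e))=0$, so $\Hom_{\mathrm D^b(\widehat A)}(E,k(\hat e)[-d])\cong\Hom(k(\hat e),k(\hat e))=k$ is one--dimensional. On the other side, dualizing the structure sequence $0\to\I_X\to\OO_A\to\OO_X\to0$ gives a triangle $\omega_X[d-g]\to\OO_A\to\I_X^\vee$ (Grothendieck duality, $X$ Cohen--Macaulay of pure dimension $d$), and applying $\Phi_{\mathcal P^\vee}$ --- using $\Phi_{\mathcal P^\vee}(\OO_A)=k(\hat e)[-g]$, which follows from $\Phi_{\mathcal P^\vee}=(-1)^*\Phi_{\mathcal P}$ --- produces a morphism $E\to k(\hat e)[-d]$ that is nonzero because the restriction $\OO_A\to\OO_X$ is. By one--dimensionality the two morphisms agree up to a scalar, so the truncation map is $\Phi_{\mathcal P^\vee}$ of the Grothendieck dual of the restriction.

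Now I apply $\mathbf{L}i^*_{\hat e}$ to the truncation triangle. The derived fibre of the origin skyscraper is the exterior algebra on the cotangent space, $\mathbf{L}i^*_{\hat e}k(\hat e)\cong\bigoplus_{p\ge0}\Lambda^p(T^*_{\hat e}\widehat A)[p]$ with $T^*_{\hat e}\widehat A=H^1(\OO_A)^\vee$ (the Koszul differentials die on restriction to the point), while $H^m(\mathbf{L}i^*_{\hat e}\mathcal F_0)=\mathrm{Tor}_{-m}(\mathcal F_0,k(\hat e))$. The long exact sequence therefore reads
\[\cdots\to\mathrm{Tor}_{-j}(\mathcal F_0,k(\hat e))\to H^j(X,\omega_X)\xrightarrow{\psi_j}\Lambda^{d-j}H^1(\OO_A)^\vee\to\mathrm{Tor}_{-j-1}(\mathcal F_0,k(\hat e))\to\cdots.\]
For $j\ge1$ both flanking terms have negative $\mathrm{Tor}$--index and so vanish, whence $\psi_j$ is an isomorphism; for $j=0$ the term $\mathrm{Tor}_{-1}$ vanishes and $\psi_0$ is surjective. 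Finally, combining the previous paragraph with Serre duality on $X$ ($H^j(\omega_X)\cong H^{d-j}(\OO_X)^\vee$) and the wedge pairing $\Lambda^{d-j}H^1(\OO_A)^\vee\cong\Lambda^{g-d+j}H^1(\OO_A)=H^{g-d+j}(\OO_A)$, one checks that $\psi_{d-i}$ is exactly the dual of the restriction map $r_i\colon H^i(\OO_A)\to H^i(\OO_X)$. Hence $r_i$ is an isomorphism for $i<d$ and injective for $i=d$.

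The homological heart --- vanishing of negative--index $\mathrm{Tor}$ and the exterior--algebra shape of the fibre of a skyscraper --- is entirely formal; the real work, and the only delicate step, lies in the bookkeeping of the last two paragraphs: verifying that the canonical truncation of $E$ is the transform of the dualized restriction (where the one--dimensionality of $\Hom(E,k(\hat e)[-d])$ is decisive), and tracking the Serre-duality and wedge-pairing identifications so that $\psi_{d-i}$ is matched with the dual of $r_i$ rather than some unrelated arrow. One also uses throughout that $X$ is Cohen--Macaulay of pure dimension $d$, so that $\omega_X$ is a sheaf and Serre duality on $X$ applies; this is the case in the situation where the lemma is invoked.
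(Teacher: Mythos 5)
Your argument is correct and is essentially the paper's: both proofs exploit that (\ref{strong}) reduces $\Phi_{\mathcal P^\vee}(\omega_X)$ to a two-step object whose top cohomology sheaf is $k(\hat e)$, and then recover $H^i(\omega_X)$ by evaluating the transform at the origin --- you via the derived fibre $\mathbf{L}i^*_{\hat e}$ (Tor against $k(\hat e)$, whence the exterior algebra) and the truncation triangle, the paper via the spectral sequence for $\mathrm{Ext}^{g+i}(k(\hat e),\Phi_{\mathcal P}(\omega_X))$, which is the Grothendieck--Serre dual formulation of the same computation. Your middle paragraph, identifying the truncation map with the transform of the dualized restriction by means of $\dim\Hom(E,k(\hat e)[-d])=1$, is a welcome naturality check that the paper leaves implicit in ``the Lemma follows by duality.''
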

\begin{proof} It the first place we note that (as it is well known) for all $\F\in \mathrm{D}^b(A)$ we have a natural isomorphism
\begin{equation}\label{ext} H^i(A,\F)\cong \text{Ext}^{g+i}(k(\hat e),\Phi_{\mathcal P}(\F))
\end{equation}
where $\hat e$ denotes the origin of $\widehat A$. Indeed, by the Fourier-Mukai equivalence,
\[H^i(A,\F)=\text{Hom}_{D(A)}(\OO_A,\F[i])\cong
 \text{Hom}_{D(\widehat A)}(k(\hat e)[-g],\Phi_{\mathcal P}(\F))\cong \text{Ext}^{g+i}(k(\hat e),\Phi_{\mathcal P}(\F)) \]
 Applying (\ref{ext}) to $\F=\omega_X$ we are reduced to compute the hypercohomology spectral sequence
 \[\text{Ext}^{g+i-k}(k(\hat e),R^k\Phi_{\mathcal P}(\omega_X))\Rightarrow \text{Ext}^{g+i}(k(\hat e),\Phi_{\mathcal P}(\omega_X))\cong H^i(\omega_X) \]
 We recall that $\text{Ext}^j_{\widehat A}(k(\hat e),k(\hat e))=\Lambda^jH^1(\OO_A)$. Condition (\ref{strong}) makes the above spectral sequence very easy. In fact we get the maps
 \begin{eqnarray*}H^i(\omega_X)\cong \text{Ext}^{g+i}(k(\hat e), \Phi_{\mathcal P}(\omega_X))&\rightarrow &\text{Ext}^{g+i-d}(k(\hat e), R^d\Phi_{\mathcal P}(\omega_X)[d])\\
 &=&\text{Ext}^{g+i-d}(k(\hat e), k(\hat e))\\
 &=&\Lambda^{g+i-d}H^1(\OO_A)\\
 &=&\Lambda^{d-i}H^1(\OO_A)^\vee
 \end{eqnarray*}
which are isomorphisms for $i>0$ and surjective for $i=0$. The Lemma follows by duality. \end{proof}

\noindent\textbf{The Poincar\'e polynomial of a GV-subscheme. } As an  application of cohomological rank functions we prove that  the Hilbert polynomial of  geometrically non-degenerate GV-subschemes is the conjectured one.

\begin{theorem}\label{gv1} Let $X$ be a geometrically non-degenerate GV-subscheme of dimension $d$ of a principally polarized abelian variety $(A,
\underline\theta)$. Then
\[\chi_{\OO_X}(x\underline\theta)=\sum_{i=0}^d{g\choose i}(x-1)^i\]
\end{theorem}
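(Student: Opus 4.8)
The plan is to compute the Hilbert polynomial $Q_X(x):=\chi_{\OO_X}(x\underline\theta)$, which is a polynomial of degree $d$ (as $X$ is reduced and irreducible of dimension $d$ by property (a) of this section), by evaluating the cohomological rank function $h^0_{\OO_X}(x\underline\theta)$ just to the right of its highest critical point $x=1$, where it agrees with $Q_X$, and comparing the result with what the transformation formula of Corollary \ref{inversion} produces out of the theta-dual $V(X)$. Concretely, I set $y=x-1$ and study $h^0_{\OO_X}((1+y)\underline\theta)=h^0_{\OO_X(\Theta)}(y\underline\theta)$ for small $y\in\mathbb Q^+$, using the reduction of Corollary \ref{inversion-Q} to the point $x_0=1=\tfrac11$ followed by Corollary \ref{inversion} applied to $\OO_X(\Theta)$ at $0$.

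First I would identify the Fourier--Mukai transform appearing in the formula. Since $X$ is Cohen--Macaulay of pure dimension $d$ (property (b)) and $\omega_A=\OO_A$, one has $(\OO_X(\Theta))^\vee=\omega_X(-\Theta)[d-g]$, so that $R^{g-i}\Phi_{\mathcal{P}^\vee}\big((\OO_X(\Theta))^\vee\big)=R^{d-i}\Phi_{\mathcal{P}^\vee}(\omega_X(-\Theta))$. Property (c) gives $\Phi_{\mathcal{P}^\vee}(\omega_X(-\Theta))=\I_{V(X)}(\Theta)[-d]$, whence these transforms vanish for $i\neq 0$ and equal $\I_{V(X)}(\Theta)$ for $i=0$. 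By Corollary \ref{inversion} this yields, for small $y>0$, both $h^i_{\OO_X(\Theta)}(y\underline\theta)=0$ for $i>0$ (so that $h^0_{\OO_X(\Theta)}(y\underline\theta)=\chi_{\OO_X}((1+y)\underline\theta)=Q_X(1+y)$) and
\[h^0_{\OO_X(\Theta)}(y\underline\theta)=y^g\,\chi_{\varphi_{\underline\theta}^*\I_{V(X)}(\Theta)}\big(\tfrac1y\underline\theta\big).\]
I would then unwind the right-hand side using that $\underline\theta$ is principal: $\chi(\underline\theta)=1$, $\chi_{\OO_A}(s\underline\theta)=s^g$, and $\varphi_{\underline\theta}\colon A\to\widehat A$ is an isomorphism carrying $\underline\theta$ to the dual principal polarization. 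Hence $\chi_{\varphi_{\underline\theta}^*\I_{V(X)}(\Theta)}(s\underline\theta)=\chi_{\I_{V(X)}}((s+1)\underline\theta)=(s+1)^g-Q_V(s+1)$, where $Q_V:=\chi_{\OO_{V(X)}}(\,\cdot\,\underline\theta)$ is the Hilbert polynomial of the theta-dual, of degree exactly $g-d-1$ (property (b)).

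Substituting $s=1/y$ and simplifying $y^g(1+\tfrac1y)^g=(1+y)^g$, the two expressions for $h^0_{\OO_X(\Theta)}(y\underline\theta)$ give the identity
\[Q_X(1+y)=(1+y)^g-y^g\,Q_V\big(1+\tfrac1y\big),\]
valid first on the interval $(0,\epsilon^+)$ and therefore as an identity of rational functions in $y$. To read off the answer I would do a degree count: the left side has degree $d$ in $y$, while $y^g Q_V(1+\tfrac1y)$ is a polynomial whose lowest-degree term has degree $g-\deg Q_V=d+1$. Consequently the terms of degree $\le d$ on the right come entirely from $(1+y)^g=\sum_{i=0}^g\binom gi y^i$, so comparing the degree-$\le d$ parts forces $Q_X(1+y)=\sum_{i=0}^d\binom gi y^i$; the substitution $y=x-1$ gives the assertion.

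The step I expect to be most delicate is the identification of $R^{g}\Phi_{\mathcal{P}^\vee}\big((\OO_X(\Theta))^\vee\big)$ with $\I_{V(X)}(\Theta)$: it requires combining the Cohen--Macaulay duality $(\OO_X(\Theta))^\vee=\omega_X(-\Theta)[d-g]$, the cohomological shift, and the theta-duality statement (c), getting the homological degrees and the twist by $\Theta$ exactly right. The accompanying bookkeeping of the isomorphism $\varphi_{\underline\theta}$ and of the dual polarization must likewise be handled with care, as must the reduction of the point $x_0=1$ to the origin; by contrast, the final degree comparison is purely formal once the identity is in hand.
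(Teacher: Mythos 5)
Your proof is correct and follows essentially the same route as the paper: both compute $h^\bullet_{\OO_X(\Theta)}$ near $0$ via Corollary \ref{inversion}, feed in the theta-duality identity (c) for the Fourier--Mukai transform of a GV-subscheme, and finish with the same degree count against $\dim V(X)=g-d-1$. The only difference is cosmetic: you work to the right of the critical point $x=1$ using the $\mathbb Q^+$ branch and $\Phi_{\mathcal P^\vee}(\omega_X(-\Theta))=\I_{V(X)}(\Theta)[-d]$, so that only $h^0$ survives, whereas the paper works to the left using the $\mathbb Q^-$ branch with $R^0\Phi_{\mathcal P}(\OO_X(\Theta))=\OO_A(-\Theta)$ and $R^d\Phi_{\mathcal P}(\OO_X(\Theta))=\omega_{V(X)}(-\Theta)$ --- two faces of the same duality.
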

 \begin{proof} We compute the functions $h^i_{\OO_X}(x\underline\theta)$ in a neighborhood of $x_0=1$. First we compute it in an interval $(1-\epsilon^-,1]$ as in Corollary \ref{inversion}. By (b) and (c) of this section we have that $R^0\Phi_{\mathcal P}(\OO_X(\Theta))=\OO_A(-\Theta)$, $R^d\Phi_{\mathcal P}(\OO_X(\Theta))=\omega_{V(X)}(-\Theta)$ and $R^i\Phi_{\mathcal P}(\OO_X(\Theta))=0$ for $i\ne 0,d$ (see e.g. \cite[Prop. 5.1(b)]{minimal}). Therefore, in a small interval $[-\epsilon^-,0]$
 \[h^i_{\OO_X(\Theta)}(y\underline\theta)=\begin{cases}(-y)^g\chi_{\OO_A(-\Theta)}(-\frac{1}{y})=y^g(1+\frac{1}{y})^g=(1+y)^g&\text{for }\>i=0\\
 (-y)^gQ(-\frac{1}{y})&\text{for } \> i=d\\
 0&\text{for } \> i\ne 0,d
 \end{cases}\]
 where $Q$ is the Hilbert polynomial of the sheaf $\omega_{V(X)}(-\Theta)$, hence a polynomial of degree $g-d-1$. It follows that  $(-y)^gQ(-\frac{1}{y})=y^{d+1}T(y)$, where $T$ is a polynomial of degree $g-d-1$ such that $T(0)\ne 0$. Setting $x=1+y$ we get that in the interval $(1-\epsilon^-,1]$ \footnote{actually in  Proposition \ref{gv2} below it will be shown that $\epsilon^-=1$, but this is not necessary for the present Theorem}
  \[h^i_{\OO_X}(x\underline\theta)=\begin{cases}x^g&\text{for }\>i=0\\
 (x-1)^{d+1}T(x-1)&\text{for } \> i=d, \>\text {where } \> \deg T=g-d-1 \> \> \text{and } \> T(1)\ne 0 \>\> \\
 0&\text{for } \> i\ne 0,d
 \end{cases}\]
 Writing $x^g$ as its Taylor expansion centered at $x_0=1$ this yields the equality of polynomials
 \[\chi_{\OO_X}(x\underline\theta)= \sum_{i=0}^g{g\choose i}(x-1)^i +(-1)^{d}(x-1)^{d+1}T(x-1).\]
 It follows that $\chi_{\OO_X}(x\underline\theta)$, which is a polynomial of degree $d$, is the Taylor expansion of $x^g$ at order $d$. \end{proof}

In the following proposition we compute  the cohomological rank functions (with respect to the polarization $\underline\theta$) of the structure sheaf of a GV-scheme. In particular, this answers to Question 8.10 of \cite{bps}, asking, in the present terminology, for the cohomological rank functions of the structure sheaf of a curve in its Jacobian.

 \begin{proposition}\label{gv2} In the same hypotheses of the previous Theorem
 \[h^0_{\OO_X}(x\underline\theta)=\begin{cases}0&\text{for }\> x\le 0\\
 x^g&\text{for }\> x\in[0,1]\\
 \chi_{\OO_X}(x\underline\theta)=\sum_{i=0}^d{g\choose i}(x-1)^i&\text{for } \> x\ge 1\\
 \end{cases}\]
 \end{proposition}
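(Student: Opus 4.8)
The plan is to treat the three ranges $x\le 0$, $x\in[0,1]$ and $x\ge 1$ separately and glue them by the continuity of Corollary \ref{continuity}. The whole computation rests on the description of the Fourier--Mukai transform of $\OO_X(\Theta)=\OO_X\langle\underline\theta\rangle$ already extracted in the proof of Theorem \ref{gv1}: one has $R^i\Phi_{\mathcal P}(\OO_X(\Theta))=0$ for $i\ne 0,d$, the degree-$0$ term $R^0\Phi_{\mathcal P}(\OO_X(\Theta))$ is (after $\varphi_{\underline\theta}^*$) an anti-ample principal-polarization line bundle, and $R^d\Phi_{\mathcal P}(\OO_X(\Theta))=\omega_{V(X)}(-\Theta)$ is torsion, supported on $V(X)$, whose support has dimension $g-d-1$.

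The central range $x\in(0,1)$ is the heart of the matter. Writing $x=1+y$ with $y\in(-1,0)$, so that $\OO_X\langle x\underline\theta\rangle=\OO_X(\Theta)\langle y\underline\theta\rangle$, I would apply the transformation formula of Proposition \ref{inversion-a} for a negative twist, which (using $\chi(\underline\theta)=1$) reads
\[
h^0_{\OO_X(\Theta)}(y\underline\theta)=(-y)^g\,h^0_{\varphi_{\underline\theta}^*\Phi_{\mathcal P}(\OO_X(\Theta))}\bigl(-\tfrac1y\underline\theta\bigr).
\]
The decisive observation is that the complex $\varphi_{\underline\theta}^*\Phi_{\mathcal P}(\OO_X(\Theta))$ has nonzero cohomology sheaves only in degrees $0$ and $d\ge 1$; hence its degree-$0$ hypercohomology, that is the function $h^0$, is computed by its truncation $\tau_{\le 0}$, namely by the single sheaf $\varphi_{\underline\theta}^*R^0\Phi_{\mathcal P}(\OO_X(\Theta))$ placed in degree $0$. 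The torsion summand $R^d$ lives in strictly positive cohomological degree and therefore cannot contribute to $h^0$ for any value of the twist. This is exactly what promotes the local equality near $x=1$ used in Theorem \ref{gv1} (Corollary \ref{inversion}) to an equality on the entire interval. Finally, since $y\in(-1,0)$ forces $-\tfrac1y>1$, the relevant twist $\varphi_{\underline\theta}^*R^0\Phi_{\mathcal P}(\OO_X(\Theta))\langle-\tfrac1y\underline\theta\rangle$ is an ample $\mathbb Q$-twist, hence IT(0), so its $h^0$ equals its Euler characteristic; the computation already carried out in the proof of Theorem \ref{gv1} then gives $h^0_{\OO_X(\Theta)}(y\underline\theta)=(1+y)^g=x^g$.

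The two outer ranges are comparatively formal. For $x\le 0$ the $\mathbb Q$-twist $\OO_X\langle x\underline\theta\rangle$ is (after pullback by a multiplication isogeny) the restriction to the integral variety $X$ — reduced and irreducible by item (a) of this section — of a non-positive line bundle, which has no nonzero generic sections; thus $h^0_{\OO_X}(x\underline\theta)=0$, consistent with $0^g$ at the endpoint. For $x\ge 1$ I would note that $\OO_X(\Theta)$ is $M$-regular: its only nonzero higher transform $R^d\Phi_{\mathcal P}(\OO_X(\Theta))$ has support of codimension $d+1>d$. Consequently $h^i_{\OO_X}(\underline\theta)=0$ for $i>0$, so $h^0_{\OO_X}(\underline\theta)=\chi_{\OO_X}(\underline\theta)$, and since an $M$-regular (a fortiori GV) $\mathbb Q$-twisted sheaf becomes IT(0) after any strictly positive further twist (Theorem \ref{B}(a)), the same vanishing holds for all $x>1$. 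Hence $h^0_{\OO_X}(x\underline\theta)=\chi_{\OO_X}(x\underline\theta)=\sum_{i=0}^d\binom{g}{i}(x-1)^i$ for every $x\ge 1$ by Theorem \ref{gv1}.

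The three formulas agree at the junctions ($0^g=0$ at $x=0$ and $1^g=1=\chi_{\OO_X}(\underline\theta)$ at $x=1$), so Corollary \ref{continuity} assembles them into the asserted piecewise expression. The only genuinely delicate step is the middle interval, and within it the one real idea is that $h^0$ is blind to the positive-degree transform $R^d$, so that the behaviour throughout $(0,1)$ is governed by the single ample line bundle coming from $R^0\Phi_{\mathcal P}(\OO_X(\Theta))$; the extremal ranges require only standard vanishing.
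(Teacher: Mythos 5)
Your proof is correct, and at the only step that genuinely requires an idea --- extending the identity $h^0_{\OO_X}(x\underline\theta)=x^g$ from a small left neighbourhood of $x=1$ to all of $(0,1]$ --- it takes a different route from the paper's. The paper controls the constant $\epsilon^-$ of Corollary \ref{inversion} via Remark \ref{eff-serre}: it shows that \emph{both} cohomology sheaves $R^0\Phi_{\mathcal P}(\OO_X(\Theta))=\OO_A(-\Theta)$ and $R^d\Phi_{\mathcal P}(\OO_X(\Theta))=\omega_{V(X)}(-\Theta)$ become IT(0) after any further twist by $n\underline\theta$ with $n>1$; the second of these requires Proposition \ref{GL} (that $\omega_{V(X)}$ is a GV-sheaf) together with Theorem \ref{B}(a). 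You instead observe that, since $\Phi_{\mathcal P}(\OO_X(\Theta))$ has cohomology sheaves only in degrees $0$ and $d\ge 1$, the degree-zero hypercohomology of any twist is computed by the truncation $\tau_{\le 0}$, i.e.\ by $R^0\Phi_{\mathcal P}(\OO_X(\Theta))$ alone (the differentials into and out of $E_r^{0,0}$ land in negative rows or columns, as do the other graded pieces of $\mathbb H^0$); this observation is correct and renders the torsion piece $R^d$ invisible to $h^0$, reducing the whole interval $(0,1)$ to the ampleness of $\OO_A(-\Theta)\langle t\underline\theta\rangle$ for $t>1$. In particular you never need Proposition \ref{GL}. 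What you give up is that your argument only treats $h^0$, whereas the paper's conclusion $\epsilon^-=1$ bounds all the functions $h^i$ at once; for the statement at hand nothing is lost. The outer ranges are handled essentially as in the paper: $x\le 0$ is immediate, and for $x\ge 1$ both arguments rest on the M-regularity of $\OO_X(\Theta)$ (you read it off from $\mathrm{codim}\,\mathrm{supp}\,R^d\Phi_{\mathcal P}(\OO_X(\Theta))=d+1>d$, the paper from the ideal-sheaf sequence) combined with Theorem \ref{B}(a).
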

\begin{proof} The assertion for $x\le 0$ is obvious. The assertion for $x\ge 1$ follows from the fact that $\OO_X(\Theta)$ is a GV-sheaf (in fact M-regular), this last assertion being well known, as it follows at once from the definition of GV-subscheme and the exact sequence
\[0\rightarrow \mathcal{I}_X(\Theta)\rightarrow \OO_A(\Theta)\rightarrow \OO_X(\Theta)\rightarrow 0.\]
 Therefore, by Theorem \ref{B}(a), $\OO_X\langle (1+x)\underline\theta
\rangle$ is IT(0) for $x>0$. In the proof of the previous Theorem, we computed the function $h^0_{\OO_X}(x\underline\theta)=x^g$ for $x$ in an interval $(1-\epsilon^-,1]$. Therefore, to conclude the proof, we need to show that we can take $\epsilon^-=1$. By Proposition \ref{GL}, the dualizing sheaf $\omega_{V(X)}$ is a GV-sheaf. Hence, again by Theorem \ref{B}(a), $\omega_{V(X)}\langle x\underline\theta\rangle$ is IT(0) for $x>0$. Therefore both $R^0\Phi_{\mathcal P}(\OO_X(\Theta))\langle n\underline\theta\rangle=\OO_A(-\Theta)\langle n\underline\theta\rangle$ and $R^d\Phi_{\mathcal P}(\OO_X(\Theta))\langle n\underline\theta\rangle=\omega_{V(X)}(-\Theta)\langle n\underline\theta\rangle$ are IT(0) for $n>1$. Therefore one can take $\epsilon^-=1$ (see Remark \ref{eff-serre}).
\end{proof}

A consequence of Corollary \ref{gv-strong} and Theorem \ref{gv1} we get

\begin{theorem}\label{hodgenumber} Let $X$ be a geometrically non-degenerate $d$-dimensional $GV$-subscheme of a $g$-dimensional p.p.a.v. $A$. Then
\[h^i(\OO_X)={g\choose i}\quad\hbox{for all $i\le d$}\]
\end{theorem}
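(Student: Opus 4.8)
The plan is to combine Lemma \ref{cup}, which pins down all the Hodge numbers below the top degree, with the Hilbert polynomial already computed in Theorem \ref{gv1}, which disposes of the top degree by a pure Euler-characteristic bookkeeping. The conceptual work has in effect already been carried out in Corollary \ref{gv-strong} and Lemma \ref{cup}; what remains is essentially combinatorial.

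First I would dispose of the cases $i<d$. Since $X$ is geometrically non-degenerate it is in particular non-degenerate, and by (a) of this section it is reduced; hence Corollary \ref{gv-strong} applies and $X$ satisfies (\ref{strong}). Lemma \ref{cup} then gives that the natural map $\Lambda^iH^1(\OO_A)\rightarrow H^i(\OO_X)$ is an isomorphism for every $i<d$. As $\dim A=g$ we have $h^1(\OO_A)=g$, so $\dim\Lambda^iH^1(\OO_A)={g\choose i}$, and therefore $h^i(\OO_X)={g\choose i}$ for all $i<d$.

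It remains to treat $i=d$, which I would extract from the Euler characteristic. Since $\dim X=d$, Grothendieck vanishing gives $h^i(\OO_X)=0$ for $i>d$, so that
\[\chi(\OO_X)=\sum_{i=0}^d(-1)^i h^i(\OO_X).\]
On the other hand, evaluating the Hilbert polynomial of Theorem \ref{gv1} at $x=0$ yields
\[\chi(\OO_X)=\chi_{\OO_X}(0\cdot\underline\theta)=\sum_{i=0}^d(-1)^i{g\choose i}.\]
Substituting $h^i(\OO_X)={g\choose i}$ for $i<d$ into the first expression and comparing with the second, all terms with $i<d$ cancel, leaving $(-1)^d h^d(\OO_X)=(-1)^d{g\choose d}$, i.e. $h^d(\OO_X)={g\choose d}$.

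The only point requiring any care is the verification that the hypotheses of Lemma \ref{cup} (equivalently of Corollary \ref{gv-strong}) are in force, namely the reducedness of $X$, which is supplied by (a). Beyond that, the argument is a direct consequence of the two preparatory results, and the injectivity clause of Lemma \ref{cup} at $i=d$ serves only as a consistency check (it yields $h^d(\OO_X)\ge{g\choose d}$) and is superseded by the exact value obtained above. Thus I expect no genuine analytic or geometric obstacle at this stage; the real substance of the theorem lies in Corollary \ref{gv-strong} and its translation through Lemma \ref{cup}.
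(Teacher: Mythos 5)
Your proposal is correct and follows the same route as the paper: Lemma \ref{cup} (via Corollary \ref{gv-strong}) handles $i<d$, and the constant term of the Hilbert polynomial from Theorem \ref{gv1} pins down $h^d(\OO_X)$ through the Euler characteristic. Your observation that the injectivity at $i=d$ is only a consistency check is also accurate.
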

\begin{proof} Lemma \ref{cup} implies that the natural map $\Lambda^iH^1(\OO_A)\rightarrow H^i(\OO_X)$ is an isomorphism for $i<d$ and injective for $i=d$. By Theorem \ref{gv1} the coefficient of degree $0$ of the Poincar\'e polynomial, namely $\chi(\OO_X)$, is equal to $\sum_{i=0}^d(-1)^i{g\choose i}$. The result follows.
\end{proof}

If one believes to  the conjectures mentioned at the beginning of this section, geometrically non-degenerate GV-subschemes should be normal with rational singularities. On a somewhat different note, we take the opportunity to prove some partial results in this direction, using the results of \cite{s} and \cite{cps}.
\begin{lemma}\label{normality}
Let $(A, \underline\theta)$ be an indecomposable PPAV. Assume that  $X$ is a geometrically non-degenerate GV-subscheme. Then $X$ is normal.
\end{lemma}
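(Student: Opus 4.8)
The plan is to deduce normality from Serre's criterion. By fact (a) (from \cite{s}) the scheme $X$ is reduced and irreducible, and by fact (b) it is Cohen--Macaulay; in particular it satisfies $S_2$, so it is enough to prove that $X$ is regular in codimension one. I would in fact prove the stronger statement that $X$ has rational singularities: for a Cohen--Macaulay variety this is equivalent, by Kempf's criterion, to the equality $\pi_*\omega_{\tilde X}=\omega_X$ for a resolution $\pi\colon \tilde X\to X$, and rational singularities are normal.

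First I would record the relevant comparison map. Writing $a=\iota\circ\pi\colon \tilde X\to A$, the morphism $a$ is generically finite onto $X$, so Kollár vanishing gives $R^{>0}a_*\omega_{\tilde X}=0$ and $a_*\omega_{\tilde X}=\iota_*\pi_*\omega_{\tilde X}$ is a GV-sheaf on $A$ by generic vanishing; likewise $\iota_*\omega_X$ is GV by Proposition \ref{GL}. The trace map yields an inclusion $\pi_*\omega_{\tilde X}\hookrightarrow \omega_X$, injective because its source is torsion-free and the map is an isomorphism over the smooth locus of $X$. On global sections this reads $H^0(\omega_{\tilde X})\hookrightarrow H^0(\omega_X)$; the left-hand side has dimension $p_g(\tilde X)=\binom{g}{d}$ by \cite{s}, while Serre duality on the Cohen--Macaulay variety $X$ together with Theorem \ref{hodgenumber} gives $h^0(\omega_X)=h^d(\mathcal O_X)=\binom{g}{d}$. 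Hence the inclusion is an isomorphism on global sections, and its cokernel $\mathcal T$ is a coherent sheaf supported on the proper closed (non-rational) locus $N\subseteq X$.

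The heart of the matter is to upgrade this to $\mathcal T=0$. Here I would exploit the unusually clean Fourier--Mukai transform of $\omega_X$: Corollary \ref{gv-strong} gives $R^i\Phi_{\mathcal P^\vee}(\iota_*\omega_X)=0$ for $i\neq 0,d$ and $R^d\Phi_{\mathcal P^\vee}(\iota_*\omega_X)=k(\hat e)$, and the analogous base-change computation shows $R^d\Phi_{\mathcal P^\vee}(a_*\omega_{\tilde X})=k(\hat e)$ as well, since by Serre duality on $\tilde X$ the group $H^d(\omega_{\tilde X}\otimes a^*P_\alpha)$ has dimension $1$ for $\alpha=\hat e$ and $0$ otherwise, the non-degeneracy of $a$ making $a^*\colon \widehat A\to \mathrm{Pic}^0(\tilde X)$ injective. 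Feeding the short exact sequence $0\to a_*\omega_{\tilde X}\to \iota_*\omega_X\to \iota_*\mathcal T\to 0$ into the long exact sequence of transforms, the induced map on the top skyscrapers is the trace, and the goal is to force $R\Phi_{\mathcal P^\vee}(\iota_*\mathcal T)=0$, hence $\iota_*\mathcal T=0$ by the equivalence. Generic vanishing for the GV-sheaf $\pi_*\omega_{\tilde X}$ disposes immediately of any zero-dimensional component of $N$, since a skyscraper acquires sections under every twist by $P_\alpha$; the genuine obstacle is a divisorial component of $N$, i.e. non-normality in codimension one. To exclude it I would invoke the hypothesis that $(A,\underline\theta)$ is indecomposable---so that $\Theta$ is irreducible---together with the exact genus count of \cite{s}, slicing by general translates of $\Theta$ to reduce the dimension while monitoring the defect $h^0(\omega)-p_g$, the base case $d=1$ being the classical fact that a reduced curve with no genus defect is smooth. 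I expect this divisorial step to be the crux of the whole argument.
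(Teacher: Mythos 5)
Your opening reduction is the same as the paper's: $X$ is reduced and irreducible by \cite{s}, Cohen--Macaulay by \cite{minimal}, so by Serre's criterion everything comes down to $R_1$. But that is exactly where your argument stops being a proof. The entire content of the lemma is the codimension-one statement, and you explicitly defer it (``I expect this divisorial step to be the crux'') to an unspecified induction by slicing with translates of $\Theta$ that is never carried out. The paper's proof of $R_1$ is short and completely different: it invokes the argument of \cite[Theorem 4.1]{cps}, which uses that $X$ has minimal class, so that the addition map $X\times V(X)\to\Theta$ (a translate of the theta divisor) is birational, and that an \emph{irreducible} theta divisor --- this is where the indecomposability hypothesis enters --- is smooth in codimension one; this property then descends to $X$. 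Nothing in your sketch reproduces or replaces that step.

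The detour through rational singularities also has a circularity problem, which is visible in how the paper organizes the material: rational singularities is the \emph{subsequent} Corollary \ref{rational-singularity}, proved only for $d=2$, and its proof \emph{uses} Lemma \ref{normality}. Concretely: from $0\to \pi_*\omega_{\tilde X}\to\omega_X\to\mathcal T\to 0$, the isomorphism on $H^0$ only gives $H^0(\mathcal T)\hookrightarrow H^1(\pi_*\omega_{\tilde X})$, so to conclude $h^0(\mathcal T)=0$ you also need the map $H^1(\pi_*\omega_{\tilde X})\to H^1(\omega_X)$ to be injective, i.e.\ a comparison of $h^1(\mathcal O_{\tilde X})$ with $h^1(\mathcal O_X)$; the paper's computation $h^1(\mathcal O_{\tilde X})=g$ relies on $\pi_*\mathcal O_{\tilde X}=\mathcal O_X$, that is, on normality. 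Likewise, to run the M-regularity argument killing $\mathcal T$ you need $V^0(\mathcal T)\neq\widehat A$, which amounts to the generic equality $\chi(\omega_{\tilde X})=\chi(\omega_X)$ and again requires the full Hodge numbers of the resolution. So your route both presupposes what it is trying to prove and claims more (rational singularities in all dimensions) than the paper establishes. The fix is to prove $R_1$ directly via the \cite{cps} mechanism and only then, if desired, run the $\omega$-comparison for the stronger statement in dimension two.
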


\begin{proof}
We know that $X$ is reduced and irreducible by (a) of this section.  Since an irreducible theta divisor is smooth in codimension $1$, by the argument in \cite[Theorem 4.1]{cps},  GV-subschemes of $A$ are smooth in codimension $1$.  Since $X$ is Cohen-Macaulay, we conclude that $X$ is normal by Serre's criterion.

\end{proof}
\begin{corollary}\label{rational-singularity}
Let $(A, \underline\theta)$ be an indecomposable PPAV. Assume that $X$ is a geometrically non-degenerate GV-subscheme of dimension $2$. Then $X$ has rational singularities.
\end{corollary}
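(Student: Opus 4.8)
The plan is to reduce the statement to a single numerical identity comparing Euler characteristics. By Lemma \ref{normality} the surface $X$ is normal, and by (b) of this section it is Cohen--Macaulay; I would fix a resolution of singularities $\pi:\widetilde X\to X$. Since $\dim X=2$, the only possibly nonzero higher direct image is $R^1\pi_*\OO_{\widetilde X}$, a skyscraper sheaf supported on the finitely many singular points, and $X$ has rational singularities precisely when $R^1\pi_*\OO_{\widetilde X}=0$, i.e. when its length $\ell:=h^0(R^1\pi_*\OO_{\widetilde X})$ vanishes. As $X$ is normal, $\pi_*\OO_{\widetilde X}=\OO_X$.

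The key identity comes from the fact that $R\pi_*\OO_{\widetilde X}$ has cohomology sheaves $\OO_X$ in degree $0$ and the length-$\ell$ skyscraper $R^1\pi_*\OO_{\widetilde X}$ in degree $1$, so that
\[\chi(\widetilde X,\OO_{\widetilde X})=\chi(X,\OO_X)-\ell.\]
Hence it suffices to prove $\chi(\OO_{\widetilde X})=\chi(\OO_X)$. I would compute the right-hand side from Theorem \ref{hodgenumber}, which gives $h^i(\OO_X)=\binom{g}{i}$ for $i=0,1,2$, so that
\[\chi(\OO_X)=1-g+\binom{g}{2},\]
and write the left-hand side as $\chi(\OO_{\widetilde X})=1-q(\widetilde X)+p_g(\widetilde X)$.

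The two inputs I would then feed in are: first, Schreieder's theorem \cite{s}, computing the geometric genus of the desingularization as the expected value, $p_g(\widetilde X)=h^0(\omega_{\widetilde X})=\binom{g}{2}$; and second, the equality of irregularities $q(\widetilde X)=g$. The inequality $q(\widetilde X)\ge g$ is easy: geometric non-degeneracy forces the image of $\widetilde X\to A$ to generate $A$, so $\mathrm{Alb}(\widetilde X)\to A$ is surjective; equivalently the Leray edge injection $H^1(\OO_X)\hookrightarrow H^1(\OO_{\widetilde X})$ together with $h^1(\OO_X)=g$ (Theorem \ref{hodgenumber}) gives $q(\widetilde X)\ge g$. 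Granting both inputs, $\chi(\OO_{\widetilde X})=1-g+\binom{g}{2}=\chi(\OO_X)$, whence $\ell=0$ and $X$ has rational singularities.

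The hard part will be the reverse inequality $q(\widetilde X)\le g$, i.e. that the resolution carries no extra holomorphic $1$-forms beyond those pulled back from $A$; this is the genuine obstacle, since the easy direction only yields $q(\widetilde X)\ge g$. I would extract it from the finer Hodge-theoretic analysis of the desingularization in \cite{s}, which pins down the Hodge numbers $h^{p,0}(\widetilde X)=\binom{g}{p}$ and in particular $h^{1,0}(\widetilde X)=g$, combined with \cite{cps} exactly as in Lemma \ref{normality}; once $q(\widetilde X)=g$ is known the computation closes immediately. A slightly different packaging replaces the $\chi$-identity by the inclusion $\pi_*\omega_{\widetilde X}\hookrightarrow\omega_X$ with skyscraper cokernel of length $\ell$ (via Grauert--Riemenschneider and local duality) and uses $h^0(\omega_X)=h^2(\OO_X)=\binom{g}{2}=p_g(\widetilde X)=h^0(\omega_{\widetilde X})$; but since this still amounts to controlling the same defect, it does not circumvent the main obstacle.
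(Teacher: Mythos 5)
Your reduction is sound, and in fact slightly cleaner than the paper's bookkeeping: for a normal surface the defect $\ell=h^0(R^1\pi_*\OO_{\widetilde X})$ is exactly $\chi(\OO_X)-\chi(\OO_{\widetilde X})$, so rationality follows from $q(\widetilde X)=g$ and $p_g(\widetilde X)=\binom{g}{2}$ together with Theorem \ref{hodgenumber}. (The paper instead runs the long exact sequence of $0\to\mu_*\omega_{X'}\to\omega_X\to\tau\to 0$ and kills the skyscraper $\tau$ by comparing $h^0$ and $h^1$ of the two outer terms, invoking M-regularity of $\omega_X$ from Corollary \ref{gv-strong}; the two packagings are equivalent and need the same two numerical inputs.) The gap is in the second input. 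You assert that the ``finer Hodge-theoretic analysis'' of \cite{s} pins down all the $h^{p,0}(\widetilde X)=\binom{g}{p}$, in particular $h^{1,0}(\widetilde X)=g$. It does not: the result of \cite{s} used here (Theorem 2 of that paper, as quoted in the Introduction and in the proof of Corollary \ref{rational-singularity}) gives only the \emph{top} holomorphic forms, namely that $H^0(\widetilde X,K_{\widetilde X})\simeq\tau^*H^0(A,\Omega_A^2)$ has the expected dimension $\binom{g}{2}$. The equality $q(\widetilde X)=g$ is precisely the step that the paper has to prove from scratch, and it is the real content of its argument.

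Concretely, the paper argues by contradiction: if $h^1(\OO_{\widetilde X})>g$ then (using normality, so $\mu_*\OO_{\widetilde X}=\OO_X$ and $h^1(\OO_X)=g$) one gets $h^0(R^1\mu_*\OO_{\widetilde X})>0$, hence a $\mu$-contracted irreducible curve $C\subset\widetilde X$ on which the Albanese map of $\widetilde X$ is generically finite, and a $1$-form $\omega_0$ with $\omega_0|_C\neq 0$. A local computation at a general point of $C$ (with $m$ the multiplicity of $C$ in the relevant fiber) shows that every $2$-form in $\tau^*H^0(A,\Omega_A^2)$ vanishes to order at least $2m-1$ along $C$, while $\omega_0\wedge\tau^*H^0(A,\Omega_A^1)$ produces a $2$-form vanishing to order exactly $m-1$; this contradicts the isomorphism $H^0(K_{\widetilde X})\simeq\tau^*H^0(A,\Omega_A^2)$ of \cite{s}. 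So the inequality $q(\widetilde X)\le g$ is derived \emph{from} the $p_g$ statement by a genuinely new geometric argument, not read off from \cite{s}. Until you supply this step (or an equivalent one), your proof is incomplete; everything else in your proposal is correct.
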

\begin{proof}
Fix $\mu: X'\rightarrow X$ a resolution of singularities. By Lemma \ref{normality}, we only need to prove that $\mu_*\omega_{X'}=\omega_X$ to conclude that $X$ has rational singularities.

We first claim that $h^1(\mathcal O_{X'})=h^0(\Omega_{X'}^1)=g$. Assume the contrary. Then the Albanese variety $ A_{X'}$ has dimension $h^1(\mathcal O_{X'})>g$. We consider the commutative diagram
\begin{eqnarray*}
\xymatrix{
X'\ar[dr]_{\tau}\ar[d]^{\mu}\ar[r]^{a_{X'}} & A_{X'}\ar[d]\\
X \ar@{^{(}->}[r]& A.}
\end{eqnarray*}
Since $X$ is normal, $\mu_*\mathcal{O}_{X'}=\mathcal{O}_X$. Hence $h^1(\mathcal O_{X'})=h^1(\mathcal{O}_X)+h^0(X, R^1\mu_*\mathcal{O}_{X'})>g$. Thus  $h^0(X, R^1\mu_*\mathcal{O}_{X'})>0$ and hence there exists an irreducible curve $C$ on $X'$ which is contracted by $\mu$ and $a_{X'}\mid_C$ is generically finite onto its image. In particular, there exists a holomorphic $1$-form $\omega_0\in H^0(X', \Omega_{X'}^1)$ such that $(\omega_0)_{|C}$ is non-zero. Pick $p\in C$  a general point
 and consider the following local calculation around $p$. Let $x, y$  be local analytic coordinates of $X'$ around $p$ and assume that $C$ is defined by $y=0$. We may assume that $\tau(p)$ is the origin of $A$ and $\tau(x, y)=(f_1(x, y), \ldots, f_g(x,y))$ in an analytic neighborhood of $p$. Let $m$ be the multiplicity of $C$ in the fiber $\mu^*(0)$. Then the holomorphic functions $f_i$ can be written as  $y^mg_i(x,y)$ with $g_i$ holomorphic around $p$. For each holomorphic $1$-form $\omega\in H^0(A, \Omega_A^1)$,  we write $\tau^*\omega=h_1dx+h_2dy$ in a neighborhood of $p$. Then $y^m\mid h_1$ and $y^{m-1}\mid h_2$. Thus for any $s\in \tau^*H^0(A, \Omega_A^2)\subset H^0(X', K_{X'}) $, the corresponding divisor $D(s)$ has multiplicity of $C$ $\geq  2m-1$. On the other hand, since $\omega_0\mid_C\neq 0$, writing locally $\omega_0=g_1dx+g_2dy$ around $p$,  we have that $(g_1)_{|_C}$ is non-zero. Hence there exists $t\in \omega_0\wedge \tau^*H^0(A, \Omega_A^1)\subset H^0(X', K_{X'})$  such that the corresponding divisor $D(t)$ whose multiplicity of $C$  is $m-1$. But then have a contradiction, since, by a result of  Schreieder mentioned in the Introduction, the natural map  $H^0(X', K_{X'})\simeq \tau^*H^0(A, \Omega_A^2)$ is an isomorphism (\cite[Theorem 2]{s}). This proves what claimed. It follows that the natural map $H^1(\OO_A)\rightarrow H^1(\OO_{X^\prime})$ is an isomorphism.

 Consider the short exact sequence $$0\rightarrow \mu_*\omega_{X'}\rightarrow \omega_X\rightarrow \tau\rightarrow 0.$$ By Theorem \ref{hodgenumber} and the result of Schreieder it follows  that $H^0(\mu_*\omega_{X'})\rightarrow H^0(\omega_X)$ is an isomorphism. By Theorem \ref{hodgenumber} and the above claim it follows that the map $H^1(\mu_*\omega_{X'})\rightarrow H^1(\omega_X)$ is an isomorphism. Thus $h^0(\tau)=0$ hence the cohomological support locus $V^0(\tau)$ is strictly contained in $\widehat A$. On the other hand, we know that $\omega_X$ is M-regular by Corollary \ref{gv-strong}. This, together with the fact that $\mu_*\omega_{X'}$ is GV (\cite{hac}) yields that  $\tau$  is M-regular. Since $V^0(\tau)$ is strictly contained in $\widehat A$, this implies   (as it is well known,  see e.g. \cite[Lemma 1.12(b)]{msri}) that $\tau=0$.
\end{proof}

\begin{corollary}\label{l-t} In the hypothesis of the previous Corollary, let $X^\prime$ be any desingularization of $X$. Then the induced morphism $\tau:X^\prime \rightarrow A$ is the Albanese morphism of $X^\prime$.
\end{corollary}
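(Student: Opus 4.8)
The plan is to factor $\tau$ through the Albanese morphism and reduce the statement to showing that the induced isogeny between $\mathrm{Alb}(X')$ and $A$ has degree one. By the universal property there is a factorization $\tau=f\circ a_{X'}$, where $a_{X'}\colon X'\to \mathrm{Alb}(X')$ is the Albanese map and $f\colon \mathrm{Alb}(X')\to A$ is a homomorphism (after a translation). In the proof of Corollary \ref{rational-singularity} it was shown that $h^1(\OO_{X'})=g$ and that the natural map $H^1(\OO_A)\to H^1(\OO_{X'})$ is an isomorphism. Hence $\dim \mathrm{Alb}(X')=h^1(\OO_{X'})=g=\dim A$, and since $a_{X'}^*$ is an isomorphism on $H^1(\OO)$, so is $f^*$; therefore $f$ is an isogeny. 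Because an isogeny induces an isomorphism on $H^1(\OO)$ (equivalently on cotangent spaces) regardless of its degree, the whole content is the equality $\deg f=1$, which is equivalent to the surjectivity of $\tau_*\colon H_1(X',\mathbb Z)\to H_1(A,\mathbb Z)$.

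First I would record two elementary consequences of the factorization. Since $\mu\colon X'\to X$ is a resolution and $X\hookrightarrow A$ is a closed embedding, $\tau$ is birational onto $X$; as $\tau=f\circ a_{X'}$ with $f$ finite, this forces $a_{X'}$ to be birational onto its image $W:=a_{X'}(X')$ and $f|_W\colon W\to X$ to be birational. Moreover $W$ is again geometrically non-degenerate in $\mathrm{Alb}(X')$: every quotient $q\colon \mathrm{Alb}(X')\to Q$ with $\dim Q\le \dim X$ is, up to isogeny, the pullback along $f$ of a quotient of $A$, so the geometric non-degeneracy of $X$ transfers to $W$.

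To rule out $\deg f=n>1$ I would pull $X$ back along $f$. Using the exchange formula (\ref{mukai-1}) for isogenies, $\Phi_{\mathcal P}(f^*(\I_X(\Theta)))=\hat f_*\Phi_{\mathcal P}(\I_X(\Theta))$, and since $\hat f$ is finite the codimension estimates on the transform are preserved; thus $f^*(\I_X(\Theta))=\I_{f^{-1}(X)}(f^*\Theta)$ is again GV. On the other hand $f^{-1}(X)=\bigcup_{k\in\ker f}(W+k)$, and the birationality of $f|_W$ shows $W+k\ne W$ for $0\ne k\in\ker f$ (otherwise a general fibre of $f|_W$ would contain more than one point), so $f^{-1}(X)$ is reducible with $n$ components. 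A reducible, geometrically non-degenerate GV-subscheme would contradict the irreducibility statement \cite[Theorem 2(1)]{s} (item (a) of this section), whence $n=1$, $f$ is an isomorphism, and $\tau=a_{X'}$.

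The step I expect to be the main obstacle is precisely the application of the irreducibility theorem at the end: Schreieder's result and the whole theta-dual formalism of this section are developed for the principal polarization, whereas $f^*\Theta$ has $\chi(f^*\Theta)=\deg f=n>1$, so $\mathrm{Alb}(X')$ is not principally polarized and the cited statement does not apply verbatim. The plan is therefore either to verify that irreducibility of geometrically non-degenerate GV-subschemes persists in this non-principally-polarized setting, or to bypass it by establishing the equivalent surjectivity of $\tau_*$ on $H_1$ directly: since $X$ has rational singularities (Corollary \ref{rational-singularity}) its resolution contracts trees of rational curves, so that $\pi_1(X')\cong\pi_1(X)$, reducing everything to the surjectivity of $\pi_1(X)\to\pi_1(A)$, which I would try to extract from the minimal cohomology class of $X$. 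Either way, controlling the \emph{degree} of $f$ rather than merely knowing that it is an isogeny is the heart of the matter.
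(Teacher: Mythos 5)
Your reduction is the right one, and it agrees with the paper's: since $h^1(\OO_{X'})=g$, everything comes down to showing that $\tau$ does not factor through a nontrivial isogeny $f:\mathrm{Alb}(X')\rightarrow A$. But the mechanism you propose for ruling out $\deg f=n>1$ does not close. Your main route ends in an appeal to the irreducibility of geometrically non-degenerate GV-subschemes (\cite[Theorem 2(1)]{s}), and, as you yourself observe, that statement and the entire theta-dual formalism of this section live on a \emph{principally polarized} abelian variety; the polarization $f^*\underline\theta$ on $\mathrm{Alb}(X')$ has $\chi(f^*\underline\theta)=n>1$, so the cited theorem simply does not apply to $f^{-1}(X)\subset \mathrm{Alb}(X')$, and no substitute is supplied. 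Your fallback -- $\pi_1(X')\cong\pi_1(X)$ from rational singularities and then surjectivity of $\pi_1(X)\rightarrow\pi_1(A)$ "extracted from the minimal class" -- is only a declaration of intent: the first step is itself a nontrivial theorem, and the second step is exactly the assertion to be proved, left unargued. So the heart of the matter, which you correctly identify, is not actually established.

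The paper settles this step with a short argument you have all the ingredients for. The condition that $\tau$ factors through no nontrivial isogeny is equivalent to: $\tau^*P_\alpha\cong\OO_{X'}$ forces $\alpha=\hat e$. Now Corollary \ref{gv-strong} gives $R^d\Phi_{\mathcal P^\vee}(\omega_X)=k(\hat e)$, which by base change yields $V^d(\omega_X)=\{\hat e\}$; and Corollary \ref{rational-singularity} gives $\mu_*\omega_{X'}=\omega_X$ (with the higher direct images vanishing), so that $H^d(X,\omega_X\otimes P_\alpha)\cong H^d(X',\omega_{X'}\otimes\tau^*P_\alpha)$. If $\tau^*P_\alpha$ were trivial for some $\alpha\neq\hat e$, the right-hand side would be $H^d(X',\omega_{X'})\neq 0$, putting $\alpha$ in $V^d(\omega_X)$ -- a contradiction. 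This bypasses entirely the need to analyze $f^{-1}(X)$ as a GV-subscheme of a non-principally polarized abelian variety. I would recommend replacing your final two paragraphs with this argument (or with a citation to \cite{lt}, as the paper also notes).
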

\begin{proof} In view of the fact that $h^1(\OO_{X^\prime})=g=\dim A$, it is enough to prove that $\tau$ does not factor through any non-trivial isogeny. This means  that if $\alpha\in \widehat A$ is such that  $\tau^*P_\alpha$ is trivial
 then $\alpha=\hat e$. But this follows from the last part of Lemma \ref{gv-strong}, which implies by base change that the cohomological support locus $V^d(\omega_X)=\{\hat e\}$ and  Lemma \ref{rational-singularity}.
Alternatively, one can use the results of \cite{lt}.
\end{proof}

\section{Cohomological rank functions of the ideal of one point, multiplication maps of global sections, and normal generation of abelian varieties}

We refer to the Introduction for a general presentation of the contents of this section. Let $A$ be an abelian variety and $\l$ and $\n$ be polarizations on $A$ (in our applications $\n$ will be a multiple of $\l$). Assume moreover that $\n$ is basepoint free. Let $N$ be an ample and basepoint free line bundle representing $\n$. We consider the evaluation bundle of $N$, defined by the exact sequence
 \begin{equation}\label{evaluation}0\rightarrow M_N\rightarrow H^0(N)\otimes\OO_A\rightarrow N\rightarrow 0
 \end{equation}
 Finally, let $p\in A$. We consider the  cohomological rank functions $h^i_{\I_p}(x\l)$ and $h^i_{M_N}(x\n)$.\footnote{Note that they  don't depend respectively on $p$ and on $N$.} In both cases for $x\ge 0$  the functions are zero for $i\ge 2$ and for $x\le 0$ they are  zero for $i\ne 1, g$. We consider their maximal critical points, namely
 \begin{eqnarray*}\beta({\l})
 &=&\inf \{x\in\mathbb Q\>|\> h^1_{\I_p}(x\l)=0\}\\
s({\underline n})&=&
\inf\{x\in\mathbb Q\>|\> h^1_{M_N}(x\underline n)=0\}
\end{eqnarray*}
As it is easy to see, the problem illustrated by Remark \ref{maximal-example} about Proposition \ref{maxthreshold} does not occur for these two sheaves. Hence if $x_0\in\mathbb Q$ is such that $\I_p\langle x_0\l\rangle$ (resp. $M_N\langle x_0\n\rangle$) is GV but not IT(0) then $\beta(\l)$ (resp. $s(\n)$) is a critical point of $h^i_{\I_p}(x\l)$ (resp. $h^i_{M_N}(x\n)$) for $i=0,1$, in fact the maximal one. In any case, for $x\in \mathbb Q$, the fact that $x>\beta(\l)$ (resp. $y> s(\n)$) is equivalent to the fact that $\I_p\langle x\l\rangle$ (resp. $M_N\langle y\n\rangle$) is IT(0).

Let us spell what the IT(0) (resp. GV) condition mean for the above $\mathbb Q$-twisted sheaves. For $x=\frac{a}{b}\in \mathbb Q^{>0}$, the fact that $\I_p\langle x\l\rangle$ is IT(0) means that
\[h^1(\mu_b^*(\I_p)\otimes L_\alpha^{ab})=0\]
for all line $\alpha\in \widehat A$, where as usual $L_\alpha^{ab}$ denotes $L^{ab}\otimes P_\alpha$. This means that the finite scheme $p+\mu_b^{-1}(0)$  imposes independent conditions to the line bundles $L^{ab}_\alpha$ for all $\alpha\in\widehat A$. Since the $L^{ab}_\alpha$'s are all translates of the same line bundle this means that  \emph{for all} $p\in A$ the finite scheme $p+\mu_b^{-1}(0)$  impose independent conditions to the global sections of the line bundle $L^{ab}$ (hence the same happens for \emph{all} line bundles $L_\alpha^{ab}$). This condition can be interpreted as basepoint-freeness for the fractional polarization $x\l$. Note that if $x\in \mathbb Z$, writing $x=\frac{xb}{b}$ one finds back the usual basepoint-freeness. In turn the fact that $\I_p\langle x\l\rangle$ is GV but not IT(0) means that for all $\alpha$ in a proper closed subset of $ \widehat A$  the finite scheme $p+\mu_b^{-1}(0)$  does not impose independent conditions to the global sections of the line bundle $L^{ab}_\alpha$. As above this means that for  $p$ in a proper subset of $ A$ the finite scheme $p+\mu_b^{-1}(0)$ does not impose independent conditions to the global sections of $L^{ab}$  (hence the same property holds for all line bundles $L^{ab}_\alpha$). Again, for $x\in \mathbb Z$ one finds back the usual notion of base points and base locus. It follows that $\I_p(L)$ is any case GV and it is IT(0) if and only if $\l$ is basepoint free. In other words: $\beta(\l)\le 1$ and equality holds if and only if $\l$ has base points.

Similarly, for $y=\frac{a}{b}$, the fact that $M_N\langle y\n\rangle$ is IT(0) (resp. GV) means that
\[h^1(\mu_b^*(M_N)\otimes N^{ab}_\alpha)=0\]
for all (resp. for general) $\alpha\in \widehat A$. Pulling back the exact sequence (\ref{evaluation}) via $\mu_b$ and tensoring with $L^{ab}_\alpha$ this has the meaning mentioned in the introduction, namely that the multiplication maps obtained by composing with the natural inclusion $H^0(N)\hookrightarrow H^0(\mu_b^*N)$
 \begin{equation}\label{mult-frac-new}H^0(N)\otimes H^0(N^{ab}_\alpha)\rightarrow H^0(\mu_b^*(N)\otimes N^{ab}_\alpha)
\end{equation}  are surjective for all (resp. for general) $\alpha\in \widehat A$. The above maps (\ref{mult-frac-new}) can be thought as the multiplication maps of global sections of $N$ and of a representative of   the rational power  $N^{\frac{a}{b}}$ (twisted by $P_\alpha$)

\begin{proposition} Let $(A,\n)$ be a polarized abelian variety and assume that $\underline n$ is basepoint free. Let $p\in A$. For $i=0,1$ and $y<1 $
\[h^i_{\I_p}(y\underline n)=\frac{(1-y)^g}{\chi(\n)}h^i_{M_N}((-1+\frac{1}{1-y})\underline n)\]
Consequently
\begin{equation}\label{s-r}s(\n)=-1+\frac{1}{1-\beta(\n)}=\frac{\beta(\n)}{1-\beta(\n)}
\end{equation}
\end{proposition}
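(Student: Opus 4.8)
The plan is to use the transformation formula (Proposition \ref{inversion-a}) to rewrite both $h^i_{\I_p}$ and $h^i_{M_N}$ as cohomological rank functions of one and the same sheaf, namely the (twisted) ideal sheaf of the kernel $K:=\ker\varphi_{\n}$, and then match them. Write $x=\frac{y}{1-y}$, so that $\frac{1}{1-y}=1+x$ and $(1-y)x=y$; note $y\mapsto x$ is the increasing bijection $(-\infty,1)\to(-1,\infty)$ appearing in the statement. The two Fourier--Mukai inputs I would establish are
\begin{align*}
\varphi_{\n}^*\Phi_{\mathcal P^\vee}(\I_p^\vee)&\cong\bigl(P\otimes\I_K\bigr)[1-g],\\
\varphi_{\n}^*\Phi_{\mathcal P^\vee}(M_N^\vee)&\cong\bigl(N\otimes\I_K\bigr)^{\oplus\chi(\n)}[1-g],
\end{align*}
where $\I_K=\varphi_{\n}^*\I_{\hat e}$ is the ideal sheaf of $K$ (reduced, as we are in characteristic zero) and $P\in\Pic0 A$.

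For the first identity I would dualize $0\to\I_p\to\OO_A\to k(p)\to0$ to the triangle $k(p)[-g]\to\OO_A\to\I_p^\vee$, apply $\Phi_{\mathcal P}$, and use $\Phi_{\mathcal P}(\OO_A)=k(\hat e)[-g]$ together with $\Phi_{\mathcal P}(k(p))=\mathcal P|_{\{p\}\times\widehat A}$ to identify $\Phi_{\mathcal P}(\I_p^\vee)$ as the $[1-g]$-shift of the ideal sheaf of $\hat e$ twisted by a line bundle in $\Pic0\widehat A$; applying $(-1)^*$ and $\varphi_{\n}^*$, with $\varphi_{\n}^*\I_{\hat e}=\I_K$, gives the first identity. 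For the second I would dualize the evaluation sequence to $0\to N^{-1}\to H^0(N)^\vee\otimes\OO_A\to M_N^\vee\to0$, apply $\Phi_{\mathcal P^\vee}$, and use Grothendieck duality (\ref{mukai-3}) in the form $\Phi_{\mathcal P^\vee}(N^{-1})=\widehat N^\vee[-g]$ to realize $\Phi_{\mathcal P^\vee}(M_N^\vee)[g]$ as the cone of the fibre restriction $\widehat N^\vee\to\widehat N^\vee_{\hat e}=H^0(N)^\vee\otimes k(\hat e)$, i.e. $(\widehat N^\vee\otimes\I_{\hat e})[1]$; pulling back by $\varphi_{\n}$ and using (\ref{mukai-4}) (so $\varphi_{\n}^*\widehat N^\vee=N^{\oplus h^0(N)}$) gives the second. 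The point I would stress is that these transforms are \emph{genuine} twisted ideal sheaves, not the direct sum of a line bundle and a skyscraper: the nonsplit extension carries the correct information, and replacing it by its associated graded gives the wrong function.

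With these in hand, for $y>0$ (hence $x>0$) I would apply Proposition \ref{inversion-a} to $\I_p$ and to $M_N$. Since both transforms are single sheaves in degree $g-1$, the formula collapses $h^{g-i}$ to $h^{1-i}$ of the sheaf, so for $i=0,1$
\[
h^i_{\I_p}(y\underline n)=\frac{y^g}{\chi(\n)}\,h^{1-i}_{\I_K}\!\Bigl(\tfrac1y\underline n\Bigr),\qquad
h^i_{M_N}(x\underline n)=x^g\,h^{1-i}_{N\otimes\I_K}\!\Bigl(\tfrac1x\underline n\Bigr),
\]
the constants being bookkept via $h^0(N)=\chi(\n)$. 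The two sheaves differ by the twist $N$, and the decisive identity is the $\mathbb Q$-twist relation of Remark \ref{Q-twisted}: $(N\otimes\I_K)\langle(w-1)\underline n\rangle=\I_K\langle w\underline n\rangle$. Taking $w=\frac1y$ and $w-1=\frac1x$ (valid since $\frac1x=\frac1y-1$) turns the second display into the first up to the factor $\frac{(1-y)^g}{\chi(\n)}$ produced by $(1-y)^gx^g=y^g$; this is exactly the claimed equality on $(0,1)$. The case $y<0$ (so $x\in(-1,0)$) I would treat by the companion half of Proposition \ref{inversion-a}, computing $\varphi_{\n}^*\Phi_{\mathcal P}(\I_p)$ and $\varphi_{\n}^*\Phi_{\mathcal P}(M_N)$ in the same manner; there both sides reduce to the constant $1$ for $i=1$ and to $0$ for $i=0$, matching directly, and $y=0$ follows by continuity (Theorem \ref{c0}).

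Finally, to deduce (\ref{s-r}): since $\underline n$ is basepoint free we have $\beta(\n)<1$, while $h^1_{\I_p}(y\underline n)=1$ for $y\le0$ forces $\beta(\n)>0$; thus $\beta(\n)\in(0,1)$, precisely the range where the established identity applies. As $\frac{(1-y)^g}{\chi(\n)}>0$ there, $h^1_{\I_p}(y\underline n)=0$ if and only if $h^1_{M_N}\bigl(\frac{y}{1-y}\underline n\bigr)=0$, and since $y\mapsto\frac{y}{1-y}$ is an increasing homeomorphism of $(0,1)$ onto $(0,\infty)$ it carries the threshold $\beta(\n)$ to the threshold $s(\n)$, yielding $s(\n)=\frac{\beta(\n)}{1-\beta(\n)}=-1+\frac1{1-\beta(\n)}$. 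I expect the main obstacle to be the precise Fourier--Mukai computations of $\Phi_{\mathcal P^\vee}(\I_p^\vee)$ and $\Phi_{\mathcal P^\vee}(M_N^\vee)$, in particular keeping track of the nonsplit extensions and the degree shifts, since getting these even slightly wrong (e.g. splitting off the skyscraper) destroys the identity.
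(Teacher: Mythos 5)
Your proof is correct, but it follows a genuinely different route from the paper's. The paper makes a single application of the \emph{negative} branch of Proposition \ref{inversion-a} to the $\mathbb Q$-twisted sheaf $\I_e\langle\n\rangle=\I_e(N)$: the one Fourier--Mukai input is $\varphi_{\n}^*\bigl(R^0\Phi_{\mathcal P}(\I_e(N))\bigr)=M_N\otimes N^{-1}$ (read off from $0\to\I_e(N)\to N\to N\otimes k(e)\to 0$ and (\ref{mukai-4})), combined with the observation that basepoint-freeness forces $\Phi_{\mathcal P}(\I_e(N))$ to be concentrated in degree $0$; this gives $h^i_{\I_e(N)}(t\n)=\frac{(-t)^g}{\chi(\n)}h^i_{M_N}((-1-\frac1t)\n)$ for \emph{all} $t<0$ at once, and $y=1+t$ is the statement. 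You never relate $\I_p$ and $M_N$ directly; instead you apply the positive branch to each of them separately and match the outputs on the intermediate object $\I_K$, $K=\ker\varphi_{\n}$. Your two transform computations, $\varphi_{\n}^*\Phi_{\mathcal P^\vee}(\I_p^\vee)\cong(P\otimes\I_K)[1-g]$ and $\varphi_{\n}^*\Phi_{\mathcal P^\vee}(M_N^\vee)\cong(N\otimes\I_K)^{\oplus\chi(\n)}[1-g]$, are correct (including the surjectivity of the fibre maps, which is what prevents the skyscraper from splitting off, and the shift bookkeeping), and the $\mathbb Q$-twist relation $(N\otimes\I_K)\langle(w-1)\n\rangle=\I_K\langle w\n\rangle$ does close the loop with the right prefactor $(1-y)^g/\chi(\n)$. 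What your version buys is a symmetric picture exhibiting both sheaves as transforms of the same twisted ideal sheaf of $\ker\varphi_{\n}$; what it costs is two Fourier--Mukai computations instead of one, plus a three-way case split ($y\in(0,1)$, $y<0$, and $y=0$ by continuity), since the positive branch only reaches positive arguments, whereas the paper's single use of the negative branch covers all $y<1$ uniformly. The deduction of (\ref{s-r}) is in substance the same in both treatments: one reads $\beta$ and $s$ as vanishing thresholds, notes $\beta(\n)\in(0,1)$ because $\n$ is basepoint-free, and transports the threshold through the increasing bijection $y\mapsto y/(1-y)$.
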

\begin{proof}
We can assume that $p=e$ (the origin of $A$). The essential point of the proof is that
\begin{equation}\label{f}\varphi_{\underline n}^*(R^0\Phi_{\mathcal P}(\I_e(N)))=M_N\otimes N^{-1}
\end{equation}
Indeed, by the exact sequence $0\rightarrow \I_e(N)\rightarrow N\rightarrow N\otimes k(e)\rightarrow 0$ it follows that $R^0\Phi_{\mathcal P}(\I_e(N))$ is the kernel of the map
\[R^0\Phi_{\mathcal P}(N)=\widehat N\buildrel f \over \rightarrow R^0\Phi_{\mathcal P}(N\otimes k(e))=\OO_{\widehat A}.\]
 By (\ref{mukai-4}) the map $\varphi_{\underline n}^*(f)$ is identified to a  map
$H^0(N)\otimes N^{-1}\rightarrow \OO_A$ which is easily seen to be the evaluation map tensored with $N^{-1}$.

Next, we notice that, since the polarization $\n$ is assumed to be basepoint free, we have that
\begin{equation}\label{R0}
\Phi_{\mathcal P}(\I_e(N))=R^0\Phi_{\mathcal P}(\I_e(N))
\end{equation}
 To prove this, we first notice that  $H^i(\I_e\otimes N_\alpha)=0$ for all $\alpha\in \widehat A$ and $i>1$. By base change this implies that the support $R^i\Phi_{\mathcal P}(\I_e(N))$ is equal to $V^1(\I_e(L))=\{\alpha\in\widehat A\>|\> h^1(\I_e\otimes N\otimes P_\alpha)>0\}$ which is non-empty if and only if $\n$ has base points. This proves (\ref{R0}).

  Therefore, by  Proposition \ref{inversion-a} and degeneration of the spectral sequence computing the hypercohomology, we have that for $i=0,1$ and  $t< 0$
\[h^i_{\I_e(N)}(t\n)=\frac{(-t)^g}{\chi(\n)}h^i_{\varphi_{\underline n}^*R^0\Phi_{\mathcal P}(\I_0(N))}(-\frac{1}{t}\underline n)\buildrel{(\ref{f})}\over =\frac{(-t)^g}{\chi(\n)}h^i_{M_N}((-1-\frac{1}{t})\underline n)\]
The first statement of the Proposition follows setting $y=1+t$. The second statement follows from the first one.
\end{proof}

Applying the previous proposition to divisible polarizations $\n=h\l$ we get Theorem \ref{b-s} of the Introduction, namely
\begin{corollary} Let $(A,\l)$ be a polarized abelian variety and let $h$ be an integer such that $h\l$ is basepoint free \emph{(hence $h\ge 2$, and $h\ge 1$ if $\l$ is basepoint-free)}. Then
\begin{equation}\label{s-beta}
s(h \l)= \frac{\beta(\l)}{h-\beta(\l)}
\end{equation}
Consequently:

\noindent (a)
 \[s(h \l)\le \frac{1}{h-1}\]
and equality  holds if and only if $\l$ has base points.

\noindent (b) Assume that $\l$ is base point free. Then $s(\l)< 1$ if and only if $\beta(\l)<\frac{1}{2}$. In particular, if $\beta(\l)<\frac{1}{2}$ then $\l$ is normally generated.
\end{corollary}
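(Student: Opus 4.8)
The plan is to derive everything from the relation (\ref{s-r}) of the preceding Proposition, namely $s(\n)=\frac{\beta(\n)}{1-\beta(\n)}$, valid for any basepoint-free polarization $\n$, applied to $\n=h\l$ (which is basepoint-free by hypothesis, so the Proposition does apply). The only genuinely new input required is the scaling behavior of the invariant $\beta$ under replacing $\l$ by its multiple $h\l$.

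First I would establish that $\beta(h\l)=\frac{\beta(\l)}{h}$. This is a formal consequence of the fact that the cohomological rank function depends only on the $\mathbb{Q}$-twisted sheaf, together with the identity of classes $x(h\l)=(hx)\l$. Concretely, $h^1_{\I_p}\bigl(x(h\l)\bigr)=h^1_{\I_p}\bigl((hx)\l\bigr)$ for every $x\in\mathbb{Q}$, so that $\{x : h^1_{\I_p}(x(h\l))=0\}=\tfrac1h\{u : h^1_{\I_p}(u\l)=0\}$, and taking infima gives $\beta(h\l)=\beta(\l)/h$.

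Substituting into (\ref{s-r}) then yields
\[
s(h\l)=\frac{\beta(\l)/h}{1-\beta(\l)/h}=\frac{\beta(\l)}{h-\beta(\l)},
\]
which is (\ref{s-beta}). For part (a) I would observe that the elementary function $t\mapsto \frac{t}{h-t}$ is strictly increasing on $(-\infty,h)$ (its derivative is $h/(h-t)^2>0$), and that $\beta(\l)\le 1<h$ with equality $\beta(\l)=1$ exactly when $\l$ has base points (recorded in the Introduction). Monotonicity applied to $\beta(\l)\le 1$ then gives $s(h\l)\le \frac{1}{h-1}$, with equality if and only if $\beta(\l)=1$, i.e. if and only if $\l$ has base points. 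For the first assertion of (b), with $h=1$ and $\l$ basepoint-free (so $\beta(\l)<1$), the inequality $s(\l)<1$ is, by the displayed formula, equivalent to $\beta(\l)<1-\beta(\l)$, i.e. to $\beta(\l)<\tfrac12$, by pure algebra.

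Finally, for the normal generation statement I would invoke the interpretation of $s$ recalled in the Introduction: for an integer $k\ge 1$ one has $s(\l)<k$ if and only if the multiplication maps $H^0(L)\otimes H^0(L^{k}\otimes P_\alpha)\to H^0(L^{k+1}\otimes P_\alpha)$ are surjective for all $\alpha\in\widehat A$. The hypothesis $\beta(\l)<\tfrac12$ gives $s(\l)<1$, hence $s(\l)<k$ for every integer $k\ge 1$; taking $\alpha=\hat e$ shows that $H^0(L)\otimes H^0(L^{k})\to H^0(L^{k+1})$ is surjective for all $k\ge 1$, which is precisely the statement that the section ring of $L$ is generated in degree one, i.e. that $\l$ is normally generated. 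I expect the only (minor) obstacle to be the clean justification of the scaling identity $\beta(h\l)=\beta(\l)/h$: one must check that the reparametrization of the polarization interacts correctly with the definition of the cohomological rank functions, and confirm that the basepoint-freeness of $h\l$ is exactly what licenses the application of (\ref{s-r}); everything else is elementary manipulation of the resulting formula.
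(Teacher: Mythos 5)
Your proof is correct and, for the formula (\ref{s-beta}) and part (a), it is essentially identical to the paper's: both rest on the scaling identity $\beta(h\l)=\beta(\l)/h$, which follows exactly as you say from $h^i_{\I_p}(x(h\l))=h^i_{\I_p}((hx)\l)$, followed by substitution into (\ref{s-r}) and the fact that $\beta(\l)\le 1$ with equality precisely when $\l$ has base points. The only place you diverge is the final normal generation claim in (b): the paper stops at the surjectivity of the degree-$(1,1)$ maps $H^0(L)\otimes H^0(L_\alpha)\rightarrow H^0(L^2_\alpha)$ for all $\alpha$ and then cites the classical argument of Kempf and Birkenhake--Lange to bootstrap to projective normality, whereas you observe that $s(\l)<1\le k$ for every integer $k\ge 1$ and invoke the equivalence ``$s(\n)<k$ iff $M_N\langle k\n\rangle$ is IT(0)'' to get surjectivity of $H^0(L)\otimes H^0(L^k\otimes P_\alpha)\rightarrow H^0(L^{k+1}\otimes P_\alpha)$ in every degree at once. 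Your route is legitimate (that equivalence is stated and justified in Section 8) and has the mild advantage of staying inside the paper's own machinery rather than citing the classical bootstrapping argument; just note that normal generation of every $L_\alpha$, as the statement requires, then follows from that of $L$ because each $L_\alpha$ is a translate of $L$.
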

\begin{proof} (a) By definition, $h^i_{\I_e}(x(h\l))=h^i_{\I_e}((xh)\l)$. Hence $\beta(h\l)=\frac{1}{h}\beta(\l)$.
By the previous Proposition,
\[s(h\l)
=\frac{\beta(h\l)}{1-\beta(h\l)}=\frac{\beta(\l)}{h-\beta(\l)}\]
 The last statement follows from the fact that $\beta(\l)\le 1$ and equality holds if and only if $\l$ has base points.

\noindent (b) The first assertion follows immediately from  (\ref{s-beta}). Concerning the last assertion, we have that $s(\l)<1$ if and only if the multiplication maps
\[H^0(L)\otimes H^0(L_\alpha)\rightarrow H^0(L^2_\alpha)\]
are surjective for all $\alpha\in\widehat A$. A well known argument (e.g. \cite{kempf}, proof of Thm 6.8(c) and Cor. 6.9, or \cite{birke-lange}, proof of Theorem 7.3.1) proves that  this implies that $L_\alpha$ is normally generated for all $\alpha\in\widehat A$.
\end{proof}

Item (b) as well as the case $h=2$ of  item (a) of the above Corollary have been already commented in the Introduction. Here we note that the Proposition implies that, for an integer $h\ge 2$ and $\frac{a}{b}\ge \frac{1}{h-1}$ the ``fractional" multiplication maps of global sections
\[H^0(L^h)\otimes H^0(L^{hab}_\alpha)\rightarrow H^0(\mu_b^*(L^h)\otimes L^{hab}_\alpha)\]
are surjective for general $\alpha\in \widehat A$ and in fact for all $\alpha \in \widehat A$  as soon as $\frac{a}{b}> \frac{1}{h-1}$ or $\l$ is basepoint free. This is much stronger than the known results on the subject. For example, for $h=3$ Koizumi's theorem on projective normality, in a slightly stronger version (\cite{kempf} Cor.6.9, \cite{birke-lange} Th. 7.3.1) tells that the above maps are surjective for all $\alpha\in\widehat A$ for $b=1$ and $a=\frac{2}{3}$
while the Corollary asserts that the same happens for $\frac{a}{b}>\frac{1}{2}$. Moreover, for the critical value $\frac{a}{b}=\frac{1}{2}$, the Corollary tells that the maps
\[H^0(L^3)\otimes H^0(L^{6}_\alpha)\rightarrow H^0(\mu_2^*(L^3)\otimes L^{6}_\alpha)\]
are surjective for general $\alpha\in \widehat A$  and in fact for all $\alpha\in\widehat A$ as soon as $\l$ is basepoint free. For arbitrary $h$ the same happens for the ``critical" maps
 \[H^0(L^h)\otimes H^0(L^{h(h-1)}_\alpha)\rightarrow H^0(\mu_{h-1}^*(L^h)\otimes L^{h(h-1)}_\alpha)\]
Note that, when $\l$ is a principal polarization, the dimension of the source of the above maps  is equal to the dimension of the target, namely $(h^2(h-1))^g$.

\providecommand{\bysame}{\leavevmode\hbox
to3em{\hrulefill}\thinspace}

\end{document}